\newcommand{\mc}[1]{\mathcal{#1}}
\newcommand{\floor}[1]{ \lfloor #1 \rfloor }
\newcommand{\sse} {\subseteq}
\newcommand{\Z}{\mathbb{Z}}
\newcommand{\R}{\mathbb{R}}
\newcommand{\C}{\mathbb{C}}
\newcommand{\E}{\mathbb{E}}
\newcommand{\ra}{\rightarrow}
\newcommand{\toinf}{\ra \infty}
\newcommand{\ptl}{\partial}
\newcommand{\beq}{\begin{equation}}
\newcommand{\eeq}{\end{equation}}
\newtheorem{theorem}{Theorem}
\newtheorem{prop}[theorem]{Proposition}
\newtheorem{lemma}[theorem]{Lemma}
\newtheorem{cor}[theorem]{Corollary}
\theoremstyle{plain}
\newtheorem{remark}[theorem]{Remark}
\newcommand{\p}{\mathbb{P}}
\numberwithin{equation}{section}
\numberwithin{theorem}{section}
\newcommand{\largebox}{\Lambda}
\newcommand{\lbox}{\largebox}
\newcommand{\vertices}{\lbox_0}
\newcommand{\edges}{{\lbox_1}}
\newcommand{\plaquettes}{{\lbox_2}}
\newcommand{\wloop}{\gamma}
\newcommand{\supp}{\mathrm{supp}}
\newcommand{\mbbm}[1]{\mathbbm{#1}}
\newcommand{\ind}{\mbbm{1}}
\newcommand{\oneskel}{S_1(\lbox)}
\newcommand{\Hom}{\mathrm{Hom}}
\newcommand{\homsym}{\psi}
\newcommand{\groupid}{1}
\newcommand{\upath}{w}
\newcommand{\vortex}{V}
\newcommand{\scale}{s}
\newcommand{\homspace}{\Omega}
\newcommand{\swap}{U}
\newcommand{\calC}{\mathcal{C}}
\newcommand{\td}{\text{d}}
\begin{document}

\title{Correlation decay for finite lattice gauge theories at weak coupling}
\runtitle{Exponential decay of correlations}


\author[A]{\fnms{Arka}~\snm{Adhikari}
\ead[label=e1]{arkaa@stanford.edu}}
\address[A]{Department of Mathematics, Stanford University, 450 Jane Stanford Way, Building 380, Stanford, CA 94305\printead[presep={,\ }]{e1}}

\author[B]{\fnms{Sky}~\snm{Cao}
\ead[label=e2]{skycao@mit.edu}}
\address[B]{Department of Mathematics, Massachusetts Institute of Technology, 77 Massachusetts Ave, Cambridge, MA 02139\printead[presep={,\ }]{e2}}


\begin{abstract}
In the setting of lattice gauge theories with finite (possibly non-Abelian) gauge groups at weak coupling, we prove exponential decay of correlations for a wide class of gauge invariant functions,  which in particular includes arbitrary functions of Wilson loop observables.
\end{abstract}

\begin{keyword}[class=MSC]
\kwd[Primary ]{70S15}
\kwd{81T13}
\kwd{81T25}
\kwd{82B20}
\end{keyword}

\begin{keyword}
\kwd{Lattice gauge theory}
\kwd{Decay of correlations}
\kwd{Wilson loop}
\end{keyword}

\maketitle

\newtheorem{defn}[theorem]{Definition}

\section{Introduction}

Lattice gauge theories are statistical mechanical models which arise as discretizations of Yang--Mills theories. {They were systematically introduced by Wilson \cite{W1974} in 1974, and since then} they have been the subject of much study by both physicists and mathematicians alike. { The main physical motivation for the study of lattice gauge theories is to understand the Standard Model of physics, which describes the behavior of elementary particles at short distances. Mathematically, the goal is to construct a continuum Yang--Mills theory, and one of the main approaches towards this is by taking a continuum limit of lattice gauge theories. So far, this has not been achieved, but nonetheless there have been many seminal works proving various aspects about the behavior of lattice gauge theories. For instance, Ba\l aban \cite{balaban83}-\cite{balaban89b} proved ultraviolet stability (a notion of tightness) for lattice gauge theories in 3D and 4D via a renormalization group approach. { The monograph of Glimm and Jaffe goes into significant detail on constructive field theory, including the rigorous construction of $U(1)$ theory in 2-dimensions \cite{GlJa}. } For results on quark confinement, see the works \cite{borgs88, Ch2021, frohlichspencer82, gopfertmack82, guth80}. The works \cite{BS16, Ch15, CJ16, J16} look at large $N$ limits of lattice gauge theories and resulting forms of gauge-string duality. We emphasize that we have left out many important references in this paragraph; see \cite{Ch2018} for a more complete list as well as a historical overview.}

{ Associated to lattice gauge theories is a parameter $G$, which is typically taken to be a Lie group. However, in this paper, we will make a mathematical simplification and take $G$ to be a finite group. Of course, this lessens the direct relevance to physics, but on the other hand, lattice gauge theories with finite groups $G$ have been previously studied in the physics literature; see e.g. \cite{AF1984, CJR1979, Fro1979, MP1979, Seiler1982, Tom1993, WEG1971} for an incomplete list. Additionally, from a statistical mechanical point of view, such models are interesting because they give examples of discrete models with non-Abelian symmetries. These symmetries require the use of topology to handle, and result in relations to knot theory as pointed out by \cite{SV1989}. Such considerations do not appear in the usual discrete spin models such as the Ising or Potts models.}

This paper expands on the analysis of the works \cite{Adh2021, Cao2020} to answer a natural question in this area. The purpose of those papers was to calculate (for finite gauge groups) the expectations of certain natural observables associated to lattice gauge theories. Having computed expectations of observables, a natural next step is to show decay of correlations of those same observables. This is precisely the purpose of the present paper. In general, computing observable expectations and showing correlation decay are two related problems which however require different proof techniques, and thus the arguments of \cite{Adh2021, Cao2020} which give the former do not immediately give the latter. To prove correlation decay in this paper, we combine the technical tools developed in the previous papers with some crucial additional proof ideas that are tuned to the problem at hand.

{ The problem of showing exponential decay of correlations for a 4D lattice gauge theory (with gauge group a non-Abelian Lie group, say $\mathrm{SU}(3)$) is one of the central questions of lattice gauge theory, as it is intimately related to showing the existence of a mass gap. Following the discussion of \cite[Section 5]{Ch2018}, let $f_{\beta}(x)$ denote the correlation between an appropriately defined Wilson loop variable centered at $0$ and another centered at $x$. Then, the existence of a mass gap for a given $\beta \geq 0$ is equivalent to the following statement: there exists some $\xi(\beta) \in (0, \infty)$ such that
\[
     -\frac{1}{\xi(\beta)}=\lim_{|x| \to \infty} \frac{\log f_{\beta}(x)}{|x|}. 
\]
The conjecture is that the above holds for all $\beta \geq 0$ (and we furthermore expect that $\lim_{\beta \toinf} \xi(\beta) = \infty$). Given this discussion, the main result of the present work can be interpreted as showing the existence of a mass gap in the setting of finite (non-Abelian) gauge groups at large $\beta$.} { In general, it is more difficult to prove exponential decay of correlations at large $\beta$ compared to small $\beta$, as the latter case can usually be handled by a routine high-temperature expansion, and this works not just for finite groups, but also non-Abelian Lie groups (see \cite{os78}).}

\subsection{Previous work}

There has been much recent interest in computing the expectations of Wilson loop observables -- Chatterjee \cite{Ch2018} considered the case $G = \Z_2$, Forrstr\"{o}m et al. \cite{FLV2020} handled finite Abelian $G$, and Cao \cite{Cao2020} covered finite (possibly non-Abelian) $G$. There is also recent work by Garban and Sep\'{u}lveda \cite{GS2021} for $G = \mathrm{U}(1)$. For lattice gauge theories with an additional Higgs field (i.e., lattice Higgs models), Forsstr\"{o}m et al.~\cite{FLV2021} considered the finite Abelian case, and Adhikari \cite{Adh2021} considered the finite (non-Abelian) case. Forsstr\"{o}m \cite{Forsstrom2021b} has also analyzed a more relevant class of observables for lattice Higgs models. 

As for previous work on the decay of correlations, there is the classic monograph by Seiler \cite{Seiler1982}, which shows exponential decay of correlations for finite Abelian lattice gauge theories in a variety of settings, using cluster expansion techniques\footnote{It is claimed in the monograph that the given cluster expansions extend without difficulty from finite Abelian groups to general finite groups. However, as pointed out by Borgs \cite[Section 7]{Borgs1984}, this is not the case.}.~There is also recent work by Forsstr\"{o}m \cite{Forsstrom2021a}, which  also proves exponential decay in the Abelian case, using a certain probabilistic swapping argument, which relates correlations to percolation probabilities of the union of two independently sampled configurations. The general idea for this type of argument previously appeared before in the literature in other settings, see e.g. \cite{Aizenman82}. We will also use this general principle, however there are significant difficulties that arise in the non-Abelian case. We will discuss these difficulties, as well as difficulties in extending the cluster expansion of \cite{Seiler1982} to our setting in Section \ref{section:abelian-vs-nonabelian}.

Finally, for some recent work on decay of correlations for other statistical mechanical models, see \cite{AP2019, AHP2020, DX2021, DPSS2017, LT2021}.

\subsection{The difference between finite Abelian and finite non-Abelian}\label{section:abelian-vs-nonabelian}

In this section, we comment on the main differences between Abelian lattice gauge theories and non-Abelian lattice gauge theories (here all groups are assumed to be finite). Actually, at a probabilistic level, it is generally the case that there is no difference, in that whatever probabilistic statement that is true in the finite Abelian case should also have an analog in the finite non-Abelian case. However, this always has to be proven, and the proofs are not merely technical improvements on the arguments which work in the Abelian case. Rather, significant new ideas must be developed to handle the non-Abelian case. 

We now try to convey the reason why. The first part of the discussion will not be new to those who are familiar with Pirogov-Sinai theory. Recall that we are always working at low temperature (i.e. large $\beta$). As is usually the case for finite spin systems, at low temperatures it is best to work in terms of ``defects". E.g., for the Ising model, the defects are the edges with spin disagreements. The benefit is that now there is a unique ground state, which is the state with no defects (in the Ising model, this would be induced by either the all $+$ configuration or all $-$ configuration). At low temperature, the system can be analyzed as a small perturbation of this unique ground state. Mathematically, this is usually achieved by writing the partition function as a convergent cluster expansion. Once this is done, many (if not all) desired properties of the model can then be read off from general cluster expansion results (see Seiler's monograph \cite{Seiler1982}).

In order to write the partition function as a cluster expansion, the key property that one needs is that the partition function obeys a certain factorization property, which in words very roughly amounts to saying that disjoint defects appear independently of each other. Now for finite Abelian theories, this factorization property can be proven directly (see e.g. \cite[Lemma 3.2.3]{Cao2020}). However, for finite non-Abelian theories, the same proof does not work, and indeed cannot possibly work, because the exact same factorization property is simply not true anymore. To try to indicate why, consider an idealized situation with two defects as pictured:

\begin{figure}[H]
    \centering
    \captionsetup{width=\linewidth}
    \includegraphics[scale=0.4]{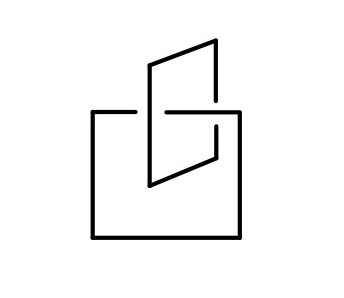} 
    \vspace{-5mm}
    \caption{}
    \label{fig:linked-loops}
\end{figure}

Here, the two defects are given by the two loops, and observe that they are linked. In the Abelian case, the aforementioned factorization property holds for these two defects, whereas in the non-Abelian case, factorization does not hold, {\it even though the two defects may be arbitrarily far apart from each other} (see \cite{SV1989}). Thus in order to obtain some sort of weaker factorization property, we must treat the above picture as a single ``connected" defect, even if there really are two connected components in the usual sense. This basic example points to the fact that non-Abelian theories lead to nontrivial topological considerations, and this seems to be a genuinely different phenomenon which is unseen elsewhere, as we are not aware of any other statistical mechanical models which lead to such considerations about the defects.

More generally, connected defects in non-Abelian theories can be thought of as elementary links (in the sense of knot theory), although they are referred to as knots in \cite{Cao2020, SV1989}. With this notion, a weaker form of the factorization property may be proved (this itself is nontrivial and requires algebraic topology -- see \cite[Section 4]{Cao2020}). However, even after arriving at the right notion of connected defect in non-Abelian theories, there are additional difficulties that prevent one from writing the partition function as a cluster expansion in this case.\footnote{In \cite{SV1989} a cluster expansion is claimed, however much of the proof is omitted, and we are unable to reconstruct the proof from the arguments that are given.} Perhaps the main difficulty is that the elementary defects now interact in a multi-body interaction, instead of just a two-body interaction (and a cluster expansion typically requires the latter). This essentially comes from the fact that one can have three knots which are linked yet pairwise unlinked (the classic example being the Borromean rings). 

Thus in summary, non-Abelian theories lead to a much more complicated notion of connected defect, and do not (as of yet) admit a cluster expansion. This is why the previous results \cite{Seiler1982, Forsstrom2021a} on correlation decay only hold for Abelian groups -- the former work relies on standard cluster expansion results, while the latter paper gives a more probabilistic argument that does not use cluster expansion, but which in the end still crucially relies on the decomposition into connected defects which (as we explained) is more straightforward in the Abelian case. 

By contrast, in this paper, we must delicately combine a probabilistic argument with a careful handling of the topological effects that appear. This is due to the fact that the elementary defects are (1) now much more complicated to define (as we tried to convey) and (2) having defined them, they are very delicate to work with. In particular, we need to have a precise understanding of the associated topological difficulties in order to apply any sort of probabilistic ``swapping" argument. These difficulties have no analogue in the Abelian case. In the end, we are forced to find novel arguments in order to combine the various intricate aspects that are present to arrive at our proof. For these reasons, we believe that our arguments for the general non-Abelian case are substantial improvements over previous proofs which work only in the Abelian case.

\subsection{Definitions and Notation}

We proceed to give the basic definitions and then state our main theorems. We start with a preliminary discussion of concepts and definitions necessary to understand the paper; one can refer to \cite{Cao2020} or \cite{Adh2021} for more details.

Given an integer $n \geq 1$, define $[n] := \{1, \ldots, n\}$. Let $G$ \label{notation:G} be a finite group, with the identity denoted by $\groupid$\label{notation:group-id}. We will commonly refer to $G$ as the gauge group. Let $\rho$\label{notation:rho} be a unitary representation of $G$, with dimension $d$, and let $\chi = \Tr \rho$\label{notation:chi} be the character of $\rho$. From here on, fix a finite lattice 
\[ \lbox\label{notation:Lambda} := ([a_1, b_1] \times \cdots \times [a_4, b_4])  \cap \Z^4, \]
where $b_i - a_i$ is the same for all $i \in [4]$. The results of this paper apply to any such $\lbox$. Let $\vertices$ be the set of vertices of $\lbox$. Let $\edges$ be the set of (nearest-neighbor) edges of $\lbox$. We implicitly assume that each edge $e = (x, y) \in \edges$ carries a positive orientation, i.e., $y = x + e_i$ for some $i \in [4]$. In this paper, we only work in dimension four, as it is the most relevant dimension for lattice gauge theories (see \cite{Ch2018}). However, we expect that our results can be extended to general dimensions, with some additional technical arguments.

We will refer to the elements $\sigma \in G^\edges$ as ``edge configurations" (because they assign edges to group elements). Given an edge configuration $\sigma \in G^\edges$ and an edge $e = (x, y) \in \edges$, we can naturally extend $\sigma$ to the negatively oriented version of $e$ by setting $\sigma_{(y, x)} := \sigma_e^{-1}$.

By a ``plaquette" $p$\label{notation:plaquette} in $\lbox$, we mean a unit square whose four boundary edges are in $\lbox$. Let $\plaquettes$ be the set of plaquettes in $\lbox$. For $p \in \plaquettes$, suppose the vertices of $p$ are $x_1, x_2, x_3, x_4$, in (say) counter-clockwise order. In an abuse of notation, for $\sigma \in G^\edges$, define
\beq\label{eq:sigma-p-def} \sigma_p := \sigma_{(x_1, x_2)} \sigma_{(x_2, x_3)} \sigma_{(x_3, x_4)} \sigma_{(x_4, x_1)}. \eeq
Define
\beq\label{eq:S-def} S_\lbox(\sigma) := \sum_{p \in \plaquettes} \Re (\chi(1) - \chi(\sigma_p)). \eeq
(By the conjugacy invariance of $\chi$, it does not matter which vertex of $p$ we choose to start at when defining $\sigma_p$.) For $\beta \geq 0$\label{notation:beta}, let $\mu_{\lbox, \beta}$\label{notation:mu-lbox-beta} be the probability measure on $G^\edges$ defined by
\beq\label{eq:def-lgt} \mu_{\lbox, \beta}(\sigma) := Z_{\lbox, \beta}^{-1} ~e^{-\beta S_\lbox(\sigma)}, \eeq
where $Z_{\lbox, \beta}$ is the normalizing constant. We say that $\mu_{\lbox, \beta}$ is the lattice gauge theory with gauge group $G$, on $\lbox$, with inverse coupling constant $\beta$. In this paper, we will work in the large $\beta$ regime, which is also known as the weak coupling regime.


Let $\wloop$ be a closed loop in $\lbox$, denoted by its sequence of oriented edges $e_1, \ldots, e_n$. We write $|\wloop|$ for the length of $\wloop$, i.e., the number of edges in $\wloop$. For $\sigma \in G^\edges$, define (in another abuse of notation)
\[ \sigma_\gamma := \sigma_{e_1} \cdots \sigma_{e_n}.\]
We say that $\sigma_\gamma$ is the holonomy of $\sigma$ around $\wloop$. Let $\chi_0$ be a character of $G$ (there is no relation between $\chi_0$ and the character $\chi$ which appears in the definition \eqref{eq:S-def} of $S_\lbox$). The Wilson loop observable $W_{\wloop, \chi_0}$ associated to $\wloop, \chi_0$ is defined as a function  $G^\edges \ra \C$ by the formula
\[ W_{\wloop, \chi_0}(\sigma) := \chi_0(\sigma_\wloop), ~~ \sigma \in G^\edges.\]
Wilson loop observables are the main observables of interest in lattice gauge theories.~See \cite[Section 4]{Ch2018} for the physical motivation in defining these observables, as well as for further discussion. Next, define
\[ \Delta_G \label{notation:Delta-G} := \min_{\substack{g \in G \\ g \neq \groupid}} \Re (\chi(\groupid) - \chi(g)). \]
We can almost state our main result. In the following, by the notation ``$\Sigma \sim \mu_{\lbox, \beta}$", we mean that $\Sigma$ is a $G^\edges$-valued random variable with distribution $\mu_{\lbox, \beta}$. We say that a function $f : G^k \ra \C$ is conjugacy invariant if, for any $g_1, \ldots, g_k \in G$ and $h_1, \ldots, h_k \in G$, we have that $f(h_1^{-1} g_1 h_1, \ldots, h_k^{-1} g_k h_k) = f(g_1, \ldots, g_k)$. A rectangle $B$ is a subset of $\lbox$ of the form $[x_1, y_1] \times \cdots \times[x_4, y_4]$, where we allow $x_i = y_i$. We let $|B|$ be the number of plaquettes contained in $B$.


\begin{theorem}\label{thm:main-general}
Let $\beta \geq \frac{1}{\Delta_G}(114 + 4 \log |G|)$. Let $L \geq 0$. Let $B_1, B_2 \sse \lbox$ be rectangles that are at a $\ell^\infty$ distance at least $L$ from each other (i.e., the $\ell^\infty$ distance between any vertex $x$ of $B_1$ and any vertex $y$ of $B_2$ is at least $L$). Let $k_1, k_2 \geq 1$, and let $f_1 : G^{k_1} \ra \C, f_2 : G^{k_2} \ra \C$ be conjugacy invariant functions. For $i = 1, 2$, let $\wloop_1^{(i)}, \ldots, \wloop_{k_i}^{(i)}$ be closed loops contained in $B_i$. Let $\Sigma \sim \mu_{\lbox, \beta}$. Then
\[\begin{split}
|\mathrm{Cov}(f_1(\Sigma_{\gamma_1^{(1)}}, \ldots, &\Sigma_{\gamma_{k_1}^{(1)}}), f_2(\Sigma_{\gamma^{(2)}_1}, \ldots, \Sigma_{\gamma^{(2)}_{k_2}}))| \leq \\
& 4 (4 \cdot 10^{24} |G|^2)^{|B_1| + |B_2|} \|f_1\|_{\infty} \|f_2\|_{\infty} e^{-(\beta / 2) \Delta_G (L-1)} .
\end{split}\]
\end{theorem}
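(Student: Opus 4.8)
\emph{Plan of proof.} I first dispose of the trivial cases: if $\Delta_G=0$ the hypothesis on $\beta$ is vacuous, and if $L\le 1$ the right-hand side already exceeds the elementary bound $2\|f_1\|_\infty\|f_2\|_\infty$ for the covariance; so I assume $\Delta_G>0$, $L\ge 2$, and (rescaling) $\|f_1\|_\infty=\|f_2\|_\infty=1$. Write $F:=f_1(\Sigma_{\gamma_1^{(1)}},\dots,\Sigma_{\gamma_{k_1}^{(1)}})$ and $G:=f_2(\Sigma_{\gamma_1^{(2)}},\dots,\Sigma_{\gamma_{k_2}^{(2)}})$; these are gauge invariant since the $f_i$ are conjugacy invariant, $F$ depends on $\Sigma$ only through the edges of $B_1$, and $G$ only through the edges of $B_2$. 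Unravelling the $\ell^\infty$-distance hypothesis yields a coordinate $j\in[4]$ and an integer $a$ with $B_1\subseteq\{x_j\le a\}$ and $B_2\subseteq\{x_j\ge a+L\}$, so the $L-1$ hyperplanes $H_s=\{x_j=s\}$, $a<s<a+L$, all separate $B_1$ from $B_2$. The strategy exploits the weak-coupling heuristic that a three-dimensional slice all of whose plaquettes are trivial acts as a gauge-invariant insulator.

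For each such $s$ let $W_s$ be the set of edges contained in $H_s$ together with the $j$-oriented edges incident to $H_s$; once $\Sigma|_{W_s}$ is fixed no plaquette has undetermined edges on both sides of $H_s$, so by the domain Markov property of $\mu_{\lbox,\beta}$ the variables $F$ and $G$ are conditionally independent given $\Sigma|_{W_s}$. On the $\sigma(\Sigma|_{W_s})$-measurable event $\mathcal A_s$ that $\sigma_p=\groupid$ for every plaquette $p\subseteq H_s$ meeting the transverse cylinders over $B_1$ or $B_2$, the relevant part of $\Sigma|_{W_s}$ is pure gauge, so a gauge transformation supported on $\{x_j\le s\}$ turns the conditional law of $\Sigma|_{\{x_j<s\}}$ given $\Sigma|_{W_s}$ into a fixed finite-volume lattice gauge theory, and likewise on the other side; a companion locality (``screening'') estimate then shows that $\mathbb E[F\mid\Sigma|_{W_s}]$ is, up to exponentially small errors, insensitive to the part of $\Sigma|_{W_s}$ outside the $B_1$-cylinder, and similarly for $G$. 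Hence, whenever some $H_s$ between $B_1$ and $B_2$ is clean in this sense, $F$ and $G$ effectively decouple. Extracting genuine decay in $L$ — a single clean slice yields only decay in $\beta$ — forces one to combine all $L-1$ slices, by telescoping over the innermost clean slice; since the decoupled region depends on which slice is clean, this is the delicate step, and I expect to organize it as an induction on scales using the locality of $F$ and $G$ to the cylinders. The outcome should be an estimate of the form $|\mathrm{Cov}(F,G)|\lesssim\mathbb P(\mathcal D)$ up to lower-order terms, where $\mathcal D$ is the event that no $H_s$ ($a<s<a+L$) is clean in the cylinders over $B_1$ and $B_2$.

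It then remains to bound $\mathbb P(\mathcal D)$ by a Peierls/deformation argument. On $\mathcal D$ each of the $L-1$ slices carries a nontrivial plaquette inside the cylinders; more efficiently, there is a connected ``vortex'' of nontrivial plaquettes crossing the gap, necessarily of at least $L-1$ plaquettes, hence contributing at least $\Delta_G(L-1)$ to $S_\lbox$ and a factor $e^{-\beta\Delta_G(L-1)}$ to the Gibbs weight. I will bound the resulting sum using the vortex/deformation machinery of \cite{Cao2020, Adh2021}: excising the crossing vortex layer by layer, comparing partition functions while controlling the change of $S_\lbox$ on neighbouring plaquettes and the Bianchi and gauge-orbit constraints, and then union-bounding over the geometry of the vortex (which is anchored in and confined to the cylinders over $B_1$ and $B_2$, producing the exponent $|B_1|+|B_2|$) and over the $|G|$ group elements it can carry. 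Since $\beta\ge\Delta_G^{-1}(114+4\log|G|)$ makes $|G|\,e^{-\beta\Delta_G}$ tiny, the series over vortex sizes converges while retaining a factor $e^{-(\beta/2)\Delta_G(L-1)}$ to spare; collecting the combinatorial constants then yields the claimed bound $4\,(4\cdot10^{24}|G|^2)^{|B_1|+|B_2|}e^{-(\beta/2)\Delta_G(L-1)}$.

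I expect the main obstacle to be the four-dimensional deformation estimate: excising a vortex while tracking the non-Abelian Bianchi identity and the gauge orbits of the surrounding configuration is technically heavy, and it is here that the large surface-counting constants (the ``knot bound'' $\approx 10^{24}$) enter; this is the crux, to be adapted from \cite{Cao2020, Adh2021}. The secondary difficulty is the scale induction in the decoupling step above; the reductions, the Markov property, and the gauge fixing should all be routine.
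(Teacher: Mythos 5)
Your plan diverges fundamentally from the paper's, which does not condition on clean slices at all. Instead, the paper works with \emph{two} i.i.d.\ samples $\Psi_1,\Psi_2$ of the gauge-equivalence-class measure $\nu_{\lbox,\beta}$, and builds an explicit measure-preserving involution (a ``swapping map'') on the event that the rectangles $B_1, B_2$ lie in different \emph{knots} of the joint support $\supp(\Psi_1)\cup\supp(\Psi_2)\cup B_1\cup B_2$. On that event one can swap the knot-piece containing $B_2$ between the two copies, and since the knot decomposition factorises both the Gibbs weight and the loop holonomies (Corollary~\ref{cor:bijection-existence}), this identifies $\E[f_1f_2\,\mathbf 1_E]$ with the decoupled $\E[f_1(\Psi_1)f_2(\Psi_2)\,\mathbf 1_E]$ exactly, with no error term. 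The whole problem is thereby reduced to a single Peierls estimate on the event $E^c$.

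Your route has a genuine gap at exactly the step you flag as ``delicate.'' The domain Markov property does give that $F$ and $G$ are conditionally independent given the slice data $\Sigma|_{W_s}$; but conditional independence says nothing about $\mathrm{Cov}(F,G)$ unless you can also show that $\E[F\mid\Sigma|_{W_s}]$ (and likewise for $G$) is nearly deterministic on $\mathcal A_s$. Your ``screening'' estimate asserts this insensitivity, but proving it is itself a decay-of-correlations statement for a boundary-conditioned lattice gauge theory in a slab; without further structure this is circular, or at best requires a full multi-scale induction you have not set up. (And the ``pure gauge'' claim on $\mathcal A_s$ only gauge-fixes the in-slice edges; the transverse $j$-oriented edges of $W_s$ remain free and feed into the conditional law.) By working with two independent copies, the paper's swapping map eliminates the conditional expectation entirely, which is precisely what makes the argument close.

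A second, smaller issue: on the event $\mathcal D$ (no clean slice) you claim there is ``a connected vortex of nontrivial plaquettes crossing the gap.'' That does not follow: each of the $L-1$ slices containing a nontrivial plaquette is consistent with those plaquettes forming $L-1$ widely-scattered, mutually non-adjacent components. The paper's notion of a \emph{knot} (Definition~\ref{def:knot-decomposition}) is engineered precisely so that ``no good rectangle well-separates it'' forces every separating slice $\partial S_2(R_k)$ to be hit (Lemma~\ref{lemma:knot-size-lower-bound}), and so that knots can be enumerated with the $(10^{24})^m$ bound (Lemma~\ref{lemma:knot-number-bound}). Plain graph-connectivity of excited plaquettes is neither guaranteed on $\mathcal D$ nor sufficient for the counting; you would need to replace ``connected vortex'' by something like the knot hierarchy, at which point you are effectively re-deriving the paper's combinatorial apparatus. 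So the Peierls half of your argument is in the right spirit, but it needs the knot formalism (or a substitute of comparable strength), and the decoupling half needs an entirely different mechanism.
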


\begin{remark}
Examples of $f_1, f_2$ that one could take in the theorem are Wilson loop observables, or, more generally, arbitrary functions of arbitrary numbers of Wilson loop observables.
\end{remark}

We now sketch the underlying idea of the proof, which is quite general and, in principle, could be applied to other types of spin systems or statistical mechanical models. For illustration purposes, suppose that we have some probability measure $\mu$ on the space $\Omega = \{\pm 1\}^{\vertices}$ of $\pm 1$ spin configurations. Let $\mu^{\otimes 2}$ be the two-fold product of $\mu$, so that $\mu^{\otimes 2}$ is a probability measure on $\Omega^2$. Let $f_1, f_2 : \Omega \ra \R$ be functions. The key assumption is: suppose that there is a bijection $T : \Omega^2 \ra \Omega^2$ such that for all $(\sigma_1, \sigma_2) \in \Omega^2$, we have that $\mu^{\otimes 2}(T(\sigma_1, \sigma_2)) = \mu^{\otimes 2}(\sigma_1, \sigma_2)$ and $f_1(\sigma_1) f_2(\sigma_2) = f_1(T_1(\sigma_1, \sigma_2)) f_2(T_1(\sigma_1, \sigma_2))$, where here $T_1 : \Omega^2 \ra \Omega$ is the first component function of $T$. Think of $T$ as a map that swaps $\sigma_1$ and $\sigma_2$ on some set of vertices (e.g., the support of $f_2$), which somehow satisfies the first condition (i.e., probabilities are preserved).

Now, let $\Sigma_1, \Sigma_2 \stackrel{i.i.d.}{\sim} \mu$. Observe that the first condition on $T$ implies that $T(\Sigma_1, \Sigma_2) \stackrel{d}{=} (\Sigma_1, \Sigma_2)$; indeed, that condition says that $\mu^{\otimes 2} \circ T = \mu^{\otimes 2}$, and since $T$ is a bijection, we obtain $\mu^{\otimes 2} \circ T^{-1} = \mu^{\otimes 2}$, and now note that $\mu^{\otimes 2} \circ T^{-1}$ is the law of $T(\Sigma_1, \Sigma_2)$ (and of course, $\mu^{\otimes 2}$ is the law of $(\Sigma_1, \Sigma_2)$). Thus in particular, $T_1(\Sigma_1, \Sigma_2) \stackrel{d}{=} \Sigma_1$. Combining the second condition on $T$ with this observation, we obtain
\[ \E[f_1(\Sigma_1) f_2(\Sigma_2)] = \E[f_1(T_1(\Sigma_1, \Sigma_2)) f_2(T_1(\Sigma_1, \Sigma_2))] = \E[f_1(\Sigma_1) f_2(\Sigma_1)], \]
from which we obtain $\mathrm{Cov}(f_1(\Sigma_1), f_2(\Sigma_1)) = 0$ (recall that $\Sigma_1, \Sigma_2 \stackrel{i.i.d.}{\sim} \mu$).

Of course, in practice, we will not have the bijection $T$ defined on all of $\Omega^2$. Rather, we will need to find a subset $E \sse \Omega^2$ on which we can define the bijection $T : E \ra E$ which satisfies the two listed properties. A modification of the previous argument then gives an upper bound on $\mathrm{Cov}(f_1(\Sigma_1), f_2(\Sigma_1))$ in terms of $\p((\Sigma_1, \Sigma_2) \notin E)$ (so we want to take the set $E$ as large as possible). 

Section \ref{section:swapping-argument} gives a more precise statement of this general argument. Needless to say, the main difficulty will be in actually constructing $T$ and $E$ for given $f_1, f_2$, and then bounding $\p((\Sigma_1, \Sigma_2) \notin E)$. For this, we will rely on some concepts introduced in \cite{Cao2020}, which we review in Sections \ref{section:preliminaries} and \ref{section:knot-decomposition}. Finally, if the reader is interested, in Appendices \ref{appendix:higgs-large-kappa} and \ref{appendix:higgs-small-kappa}, we show how to handle Higgs models at large and small $\kappa$ (these models were recently analyzed in \cite{Adh2021}). The corresponding analysis for models with  a Higgs' field is more technical and requires more notation that is not easy to unify with the existing arguments. Thus for the convenience of the reader, we have placed the discussion of these ideas in the appendices.

\subsection{Acknowledgements}

We thank Sourav Chatterjee for suggesting this problem, as well as for helpful conversations. A.A. would also like to thank NSF award 2102842 for its support.

\section{Preliminaries}\label{section:preliminaries}


In this section (following \cite{Cao2020}), we rephrase everything in the language of algebraic topology, using the starting observation that edge configurations $\sigma \in G^\edges$ can be thought of as homomorphisms of the fundamental group of the lattice. The main benefit of this rephrasing is that it allows us to prove Lemma \ref{lemma:vortices-well-separated-gauge-fix-decomp} (this was essentially already done in \cite{Cao2020}, see the proof of \cite[Lemma 4.2.21]{Cao2020}). This lemma, in turn, allows us to prove Lemma \ref{lemma:bijection-existence} and Corollary \ref{cor:bijection-existence}, which form the main technical foundation for the proof of Theorem \ref{thm:main-general}.

Recall that a cell complex is a certain type of topological space obtained by assembling cells of varying dimensions; see, e.g., Section 0.2.4 of \cite{STILL1993}. In our case, the cells will be unit squares of dimension at most two, i.e., vertices, edges, and plaquettes. So for us, a one dimensional cell complex, or 1-complex, is a space consisting of vertices and edges, and thus it is a graph. A two dimensional cell complex, or 2-complex, is a space consisting of vertices, edges, and plaquettes.

In what follows, if we define a 1-complex by specifying a collection of edges, then that 1-complex is understood to also include the vertices of the edges in the collection. Similarly, if we define a 2-complex by specifying a collection of plaquettes, then that 2-complex is understood to also include the vertices and edges of the plaquettes in the collection.

Let $\oneskel$\label{notation:oneskel} denote the 1-skeleton of $\lbox$, i.e., the 1-complex obtained from the edges of $\lbox$. Throughout this paper, fix some vertex $x_0 \in \vertices$. Recall that the fundamental group $\pi_1(\oneskel, x_0)$ is a group of equivalence classes of closed loops starting and ending at $x_0$. The equivalence relation is given by setting equivalent any two loops of the form $\ell_1 e e^{-1} \ell_2$ and $\ell_1 \ell_2$, where $e = (x, y)$ is an edge of $\oneskel$, $\ell_1$ is a path in $\oneskel$ from $x_0$ to $x$, and $\ell_2$ is a path in $\oneskel$ from $x$ to $x_0$. The group operation is induced by loop concatenation.


Next, we observe that edge configurations $\sigma \in G^\edges$ naturally induce a homomorphism from $\pi_1(\oneskel, x_0)$ to $G$. 

\begin{defn}\label{def:psi-x0}
Let $\homspace := \Hom(\pi_1(\oneskel, x_0), G)$ be the set of homomorphisms from $\pi_1(\oneskel, x_0)$ to $G$. Define the map $\psi_{x_0} : G^\edges \ra \homspace$ as follows. Let $\sigma \in G^\edges$. Given an element $\xi \in \pi_1(\oneskel, x_0)$, suppose that $\xi$ can be represented as a loop that traverses the edges $e_1, \ldots, e_n$. Define $(\homsym_{x_0}(\sigma))(\xi) := \sigma_{e_1} \cdots \sigma_{e_n}$. Note that this is well-defined, since if $ee^{-1}$ appears, then this gives the term $\sigma_e \sigma_{e^{-1}} = \sigma_e \sigma_e^{-1} = \groupid$.
\end{defn}

\begin{remark}
The map $\psi_{x_0}$ is exactly the same as the map $\psi_T^{x_0}$ defined in \cite[Section 4.1]{Cao2020}. The difference in notation is due to the fact that in \cite[Section 4.1]{Cao2020}, a spanning tree $T$ of $\oneskel$ is fixed, and $\psi_T^{x_0}$ is defined in terms of this spanning tree. It was not noted in that paper, but it turns out that the definition of $\psi_T^{x_0}$ is independent of $T$, and thus we prefer to write $\psi_{x_0}$ in the present paper. 
\end{remark}


An immediate consequence of Definition \ref{def:psi-x0} is the following lemma, whose proof is omitted.

\begin{lemma}\label{lemma:psi-conjugacy-preserve}
Let $\sigma \in G^\edges$. For any loop $\wloop$ in $\lbox$, and any path $\ell$ in $\lbox$ from $x_0$ to the initial vertex of $\wloop$, we have that $\sigma_\wloop$ is conjugate to $(\psi_{x_0}(\sigma))(\ell^{-1}\wloop \ell)$.
\end{lemma}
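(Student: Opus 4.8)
The plan is to write everything out explicitly in terms of the edge sequences and then observe that the holonomy of a product of paths factorizes as a product of the individual holonomies, so that conjugation by $\sigma_\ell$ appears automatically. First I would set up notation: say $\wloop$ traverses the oriented edges $f_1, \ldots, f_n$ (so its initial vertex is, say, $y$), and $\ell$ is a path from $x_0$ to $y$ traversing oriented edges $g_1, \ldots, g_m$. Then the concatenated loop $\ell^{-1} \wloop \ell$ is a closed loop based at $x_0$: it traverses $g_m^{-1}, \ldots, g_1^{-1}$, then $f_1, \ldots, f_n$, then $g_1, \ldots, g_m$. (Here I use the convention, already set up in the excerpt, that $\sigma$ extends to negatively oriented edges via $\sigma_{(z,w)} := \sigma_{(w,z)}^{-1}$.)

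Next I would apply Definition \ref{def:psi-x0} directly to this edge sequence. Since $\psi_{x_0}(\sigma)$ evaluated on a loop is just the ordered product of the $\sigma$-values along its edges, we get
\[
(\psi_{x_0}(\sigma))(\ell^{-1}\wloop\ell) = \big(\sigma_{g_m}^{-1} \cdots \sigma_{g_1}^{-1}\big)\big(\sigma_{f_1}\cdots\sigma_{f_n}\big)\big(\sigma_{g_1}\cdots\sigma_{g_m}\big).
\]
Writing $\sigma_\ell := \sigma_{g_1}\cdots\sigma_{g_m}$, the first parenthesized factor is exactly $\sigma_\ell^{-1}$, the middle factor is $\sigma_\wloop$ by definition, and the last is $\sigma_\ell$. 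Hence $(\psi_{x_0}(\sigma))(\ell^{-1}\wloop\ell) = \sigma_\ell^{-1}\, \sigma_\wloop\, \sigma_\ell$, which exhibits it as a conjugate of $\sigma_\wloop$. One should also note the minor point that $\ell^{-1}\wloop\ell$ is genuinely a closed loop based at $x_0$ (its endpoints match up because $\ell$ goes $x_0 \to y$ and $\wloop$ is closed at $y$), so that it indeed represents an element of $\pi_1(\oneskel, x_0)$ and the application of $\psi_{x_0}$ is legitimate; and that the well-definedness remark in Definition \ref{def:psi-x0} guarantees the value does not depend on which representative loop we chose.

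There is essentially no obstacle here — the only thing to be a little careful about is bookkeeping with orientations and the order of the edge sequence when traversing $\ell^{-1}$, and making sure the claim is about conjugacy (not equality), since the base point of $\wloop$ need not be $x_0$. This is why the lemma is stated up to conjugacy and why its proof is omitted in the paper; the argument above is the routine verification.
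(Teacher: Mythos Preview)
Your proof is correct and is precisely the routine verification the paper has in mind; the paper itself omits the proof, noting only that the lemma is an immediate consequence of Definition~\ref{def:psi-x0}. One minor bookkeeping point: under the paper's left-to-right concatenation convention (cf.\ $\xi_\gamma = \ell(x_0,x)\,\gamma\,\ell(x_0,x)^{-1}$ in Definition~\ref{def:xi-gamma}), the loop based at $x_0$ should read $\ell\gamma\ell^{-1}$ rather than $\ell^{-1}\gamma\ell$, so the lemma's notation appears to contain a typo that your edge sequence inherits (as written, your path starts at $y$ and does not compose in the middle)---but since the claim is only up to conjugacy and $\sigma_\ell\,\sigma_\gamma\,\sigma_\ell^{-1}$ is equally a conjugate of $\sigma_\gamma$, this does not affect the argument.
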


The following lemma is essentially \cite[Lemma 4.1.1]{Cao2020}. Due to the differences in notation between that paper and the present paper, we provide a proof of this lemma in Appendix \ref{appendix:edge-configurations-and-homomorphisms}.

\begin{lemma}\label{lemma:psi-many-to-one}
For any $\psi \in \homspace$, there are exactly $|G|^{|\vertices| - 1}$ edge configurations $\sigma \in G^\edges$ such that $\homsym_{x_0}(\sigma) = \psi$.
\end{lemma}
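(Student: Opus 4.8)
The plan is to set up a bijection between the fiber $\psi_{x_0}^{-1}(\psi)$ and the set $G^{\vertices \setminus \{x_0\}}$ of ``vertex relabelings fixing $x_0$''. First I would fix a spanning tree $T$ of $\oneskel$, so that every vertex $x \in \vertices$ is joined to $x_0$ by a unique path $\ell_x$ in $T$. Given a target homomorphism $\psi \in \homspace$, I would produce one distinguished edge configuration $\sigma^\psi \in \psi_{x_0}^{-1}(\psi)$: set $\sigma^\psi_e := \groupid$ for every edge $e$ of $T$, and for a non-tree edge $e = (x,y)$, note that $\ell_x e \ell_y^{-1}$ is a loop at $x_0$ (after conjugating into $\pi_1(\oneskel, x_0)$) and set $\sigma^\psi_e := \psi([\ell_x e \ell_y^{-1}])$. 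One checks that $\psi_{x_0}(\sigma^\psi) = \psi$, since the value of $\psi_{x_0}(\sigma^\psi)$ on any loop is computed by reading off the non-tree edges crossed, and tree edges contribute $\groupid$; so the fiber is nonempty.

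Next I would describe the gauge action of $G^{\vertices}$ on $G^\edges$: a function $g : \vertices \ra G$ acts by $(g \cdot \sigma)_{(x,y)} := g_x^{-1} \sigma_{(x,y)} g_y$. The two key facts are: (i) this action preserves each fiber of $\psi_{x_0}$ \emph{up to a global conjugation by $g_{x_0}$} — more precisely, $\psi_{x_0}(g \cdot \sigma)(\xi) = g_{x_0}^{-1} (\psi_{x_0}(\sigma)(\xi)) g_{x_0}$ for every $\xi \in \pi_1(\oneskel, x_0)$, because all loops at $\xi$ start and end at $x_0$ so only $g_{x_0}$ survives the telescoping — hence the subgroup $\{g : g_{x_0} = \groupid\} \cong G^{\vertices \setminus \{x_0\}}$ genuinely fixes each fiber setwise; and (ii) this action of $G^{\vertices \setminus \{x_0\}}$ is free, since if $g \cdot \sigma = \sigma$ then walking along the tree $T$ from $x_0$ outward forces $g_x = \groupid$ for every $x$ by induction on tree-distance (using $g_{x_0} = \groupid$ and $\sigma^\psi_e = \groupid$ is not needed here — freeness holds for any $\sigma$ because $g_y = \sigma_{(x,y)}^{-1} g_x \sigma_{(x,y)}$ propagates $\groupid$ along tree edges... wait, one must be slightly careful: $g \cdot \sigma = \sigma$ gives $g_x^{-1}\sigma_{(x,y)}g_y = \sigma_{(x,y)}$, i.e.\ $g_y = \sigma_{(x,y)}^{-1} g_x \sigma_{(x,y)}$, so if $g_x = \groupid$ then $g_y = \groupid$; starting from $g_{x_0}=\groupid$ and propagating along $T$ gives $g \equiv \groupid$).

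Then I would show the orbit of $\sigma^\psi$ under $G^{\vertices \setminus \{x_0\}}$ is \emph{all} of $\psi_{x_0}^{-1}(\psi)$. Given any $\sigma$ with $\psi_{x_0}(\sigma) = \psi$, I would construct the unique $g$ with $g_{x_0} = \groupid$ that moves $\sigma^\psi$ to $\sigma$ by propagating along the tree: define $g_x$ recursively so that $(g \cdot \sigma^\psi)_e = \sigma_e$ on every tree edge $e$ (possible and unique since tree edges of $\sigma^\psi$ are $\groupid$), then verify that $(g\cdot\sigma^\psi)_e = \sigma_e$ on every non-tree edge as well — this is where one uses $\psi_{x_0}(\sigma) = \psi_{x_0}(\sigma^\psi) = \psi$, evaluating both sides on the loop $\ell_x e \ell_y^{-1}$ and cancelling the tree contributions (which now agree by construction) to force agreement on $e$. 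Combining: each fiber is a single free orbit of $G^{\vertices \setminus \{x_0\}}$, hence has cardinality exactly $|G|^{|\vertices| - 1}$, as desired.

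The main obstacle, and the only genuinely delicate point, is the non-tree-edge verification in the last step: one must carefully track how the gauge transformation $g$ acts on the holonomy around the loop $\ell_x e \ell_y^{-1}$ and confirm that the contributions from the tree portions $\ell_x$ and $\ell_y$ telescope correctly against the recursively-defined $g_x, g_y$, so that equality of holonomies forces $(g \cdot \sigma^\psi)_e = \sigma_e$. Everything else — freeness, non-emptiness of the fiber, the conjugation identity (i) — is a routine induction along the spanning tree.
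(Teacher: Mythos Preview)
Your proposal is correct and follows essentially the same approach as the paper: the paper's proof (in the appendix) also fixes a spanning tree, builds a distinguished gauge-fixed preimage $\sigma^\psi$ to show the fiber is nonempty, then shows that two configurations lie in the same fiber if and only if they differ by a gauge transformation $h$ with $h_{x_0}=\groupid$ (your transitivity step), and finally shows that distinct such $h$ give distinct configurations (your freeness step). The organization into three lemmas and the tree-propagation arguments are the same as yours.
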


\begin{remark}
This lemma shows that homomorphisms $\psi \in \homspace$ can be thought of as gauge equivalence classes.
\end{remark}

Next, in the following sequence of definitions and lemmas culminating in Lemma \ref{lemma:psi-Sigma-distribution} below, we proceed to reinterpret the lattice gauge theory \eqref{eq:def-lgt} as giving a random homomorphism. 

\begin{defn}\label{def:xi-gamma}
For all $x \in \vertices$, fix an arbitrary path $\ell(x_0, x)$ from $x_0$ to $x$ in $\oneskel$. Let $\wloop$ be a loop in $\lbox$ with the initial vertex $x$. Define $\xi_\wloop := \ell(x_0, x) \wloop \ell(x_0, x)^{-1} \in \pi_1(\oneskel, x_0)$. For a plaquette $p \in \plaquettes$, let $\gamma_p$ be the same loop which traverses the boundary of $p$ that was used in the definition \eqref{eq:sigma-p-def} of $\sigma_p$. Define $\xi_p := \xi_{\gamma_p}$. 
\end{defn}

\begin{defn}
For $\psi \in \homspace$, define
\[ \supp(\psi) := \{p \in \plaquettes : \psi(\xi_p) \neq \groupid\}. \]
This definition does not depend on the particular choices of $\ell(x_0, x), x \in \vertices$ from Definition \ref{def:xi-gamma}, as a consequence of Lemma \ref{lemma:psi-conjugacy-preserve} and the fact that $\psi_{x_0} : G^\edges \ra \Omega$ is onto (which itself follows by Lemma \ref{lemma:psi-many-to-one}).
\end{defn}

The following fact was proven in \cite[Lemma 4.3.7]{Cao2020}, and thus we omit the proof here.

\begin{lemma}\label{lemma:homomorphism-number-bound}
Let $P \sse \plaquettes$. The number of $\psi \in \homspace$ such that $\supp(\psi) = P$ is at most $|G|^{|P|}$.
\end{lemma}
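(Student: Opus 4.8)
\textbf{Proof proposal for Lemma \ref{lemma:homomorphism-number-bound}.}

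The plan is to show that any $\psi \in \homspace$ with $\supp(\psi) = P$ is completely determined by the tuple $(\psi(\xi_p))_{p \in P} \in G^{P}$, from which the bound $|G|^{|P|}$ follows immediately. To do this, I would first recall that $\pi_1(\oneskel, x_0)$ is a free group, since $\oneskel$ is a graph (a $1$-complex): fixing a spanning tree $T$ of $\oneskel$, the non-tree edges $e_1, \ldots, e_m$ give a free generating set, where the generator $\xi_{e_j}$ associated to a non-tree edge $e_j = (x,y)$ is the loop $\ell_T(x_0, x)\, e_j\, \ell_T(y, x_0)$ obtained by running through the tree to $x$, crossing $e_j$, and returning through the tree. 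Because $\pi_1(\oneskel, x_0)$ is free on $\{\xi_{e_1}, \ldots, \xi_{e_m}\}$, a homomorphism $\psi$ is determined by its values $\psi(\xi_{e_1}), \ldots, \psi(\xi_{e_m})$.

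The key step is then to argue that each $\psi(\xi_{e_j})$ is determined by the data $(\psi(\xi_p))_{p \in P}$. The idea is that the generator $\xi_{e_j}$ is homotopic (in the $2$-complex $\twoskel$, i.e. modulo plaquette boundaries) to a product of conjugates of plaquette-boundary loops: since every edge of $\lbox$ lies on some plaquette and the $2$-skeleton of a box in $\Z^4$ is simply connected, the loop $\xi_{e_j}$ can be written in $\pi_1(\oneskel, x_0)$ as a product $\prod_k \eta_k\, \xi_{p_k}^{\pm 1}\, \eta_k^{-1}$ for suitable paths $\eta_k$ and plaquettes $p_k$. Applying $\psi$ gives $\psi(\xi_{e_j}) = \prod_k \psi(\eta_k)\, \psi(\xi_{p_k})^{\pm 1}\, \psi(\eta_k)^{-1}$. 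Now for any plaquette $p_k \notin P$ we have $\psi(\xi_{p_k}) = \groupid$ by definition of $\supp(\psi)$, so those factors drop out; and for $p_k \in P$, the value $\psi(\xi_{p_k})$ is part of our data. The conjugating elements $\psi(\eta_k)$ are themselves words in the generators, so one has to be slightly careful to set this up as a genuine determination rather than a circular one — the cleanest route is to observe that $\homspace$ is in bijection (via $\psi_{x_0}$, up to the $|G|^{|\vertices|-1}$ gauge fiber of Lemma \ref{lemma:psi-many-to-one}) with edge configurations, and that $\supp(\psi) = P$ forces $\sigma_p = \groupid$ for $p \notin P$ for a representative $\sigma$; after gauge-fixing $\sigma$ to be the identity on a spanning tree, the remaining $m$ edge values are recovered one at a time by a simple-connectedness / induction-on-plaquettes argument in the box, each new edge value being forced by $\sigma_p$ for the unique "new" plaquette it closes off. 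Either way the count is: at most $|G|^{|P|}$ choices for $(\psi(\xi_p))_{p\in P}$, and each extends to at most one $\psi$.

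The main obstacle I anticipate is making the "$\xi_{e_j}$ is a product of conjugates of plaquette boundaries, and hence determined by the $\psi(\xi_p)$" step precise without circularity, since the conjugators involve the very generators we are trying to pin down. I expect the resolution is exactly the one used in \cite{Cao2020}: work on the edge-configuration side, fix a gauge on a spanning tree, and then exploit that $\lbox$ (being a box in $\Z^4$) has a simply connected $2$-skeleton so that one can order the plaquettes $p_1, p_2, \ldots$ so that each introduces at most one previously-unconstrained edge, recovering $\sigma$ edge-by-edge from the plaquette holonomies; edges whose closing plaquette lies outside $P$ are forced to be (conjugate to) the identity and contribute nothing new. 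This is routine once set up, so I would keep the write-up short and, as the excerpt does, lean on \cite[Lemma 4.3.7]{Cao2020} for the details.
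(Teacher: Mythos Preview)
Your proposal is correct and matches the paper's treatment exactly: the paper gives no argument of its own, simply recording that the fact is \cite[Lemma 4.3.7]{Cao2020} and omitting the proof. Your outline --- reduce via Lemma~\ref{lemma:gauge-fixed-bijection-homomorphism} to counting gauge-fixed $\sigma \in GF(T)$ with $\supp(\sigma)=P$, then use a shelling/simple-connectedness argument on the box to show that $(\sigma_p)_{p\in\plaquettes}$ (hence $(\sigma_p)_{p\in P}$, the rest being forced to the identity) determines $\sigma$ --- is the right strategy, and your own suggestion to defer the combinatorial details to \cite{Cao2020} is precisely what the paper does.
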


\begin{defn}
For $\beta \geq 0$, define $\varphi_\beta : G \ra (0, 1]$ by
\[\varphi_\beta(g) := \exp\big(-\beta\Re (\chi(1) - \chi(g))\big), ~~ g \in G. \]
Recalling the definition \eqref{eq:S-def} of $S_\lbox$, we have that
$e^{-S_\lbox(\sigma)} = \prod_{p \in  \plaquettes} \varphi_\beta(\sigma_p)$.
\end{defn}

\begin{defn}\label{def:nu}
Define the probability measure $\nu_{\lbox, \beta}$ on $\homspace$ as follows:
\[ \nu_{\lbox, \beta}(\psi) := (Z^\nu_{\lbox, \beta})^{-1} \prod_{p \in \plaquettes} \varphi_\beta(\psi(\xi_p)), \]
where $Z^\nu_{\lbox, \beta}$ is the normalizing constant.
\end{defn}

\begin{lemma}\label{lemma:psi-Sigma-distribution}
Let $\Sigma \sim \mu_{\lbox, \beta}$. Then $\psi_{x_0}(\Sigma) \sim \nu_{\lbox, \beta}$.
\end{lemma}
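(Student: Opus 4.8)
The plan is to show that $\psi_{x_0}$ pushes forward $\mu_{\lbox,\beta}$ to $\nu_{\lbox,\beta}$ by a direct computation, using the change-of-variables formula for pushforwards of measures under a finite-to-one map. Concretely, for $\psi \in \homspace$, we have
\[
\prob(\psi_{x_0}(\Sigma) = \psi) = \sum_{\sigma \in G^\edges : \psi_{x_0}(\sigma) = \psi} \mu_{\lbox,\beta}(\sigma) = Z_{\lbox,\beta}^{-1} \sum_{\sigma : \psi_{x_0}(\sigma) = \psi} e^{-\beta S_\lbox(\sigma)}.
\]
The first step is to observe that $e^{-\beta S_\lbox(\sigma)}$ depends on $\sigma$ only through $\psi_{x_0}(\sigma)$. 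Indeed, $e^{-S_\lbox(\sigma)} = \prod_{p\in\plaquettes}\varphi_\beta(\sigma_p)$, and by Lemma \ref{lemma:psi-conjugacy-preserve}, $\sigma_p$ is conjugate to $(\psi_{x_0}(\sigma))(\xi_p)$; since $\varphi_\beta$ is a function of $\Re\chi$, which is conjugacy invariant, we get $\varphi_\beta(\sigma_p) = \varphi_\beta((\psi_{x_0}(\sigma))(\xi_p))$. Hence for every $\sigma$ with $\psi_{x_0}(\sigma) = \psi$,
\[
e^{-\beta S_\lbox(\sigma)} = \prod_{p \in \plaquettes} \varphi_\beta(\psi(\xi_p)).
\]

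The second step is to count the fibers. By Lemma \ref{lemma:psi-many-to-one}, each fiber $\{\sigma : \psi_{x_0}(\sigma) = \psi\}$ has exactly $|G|^{|\vertices|-1}$ elements, independent of $\psi$. Combining,
\[
\prob(\psi_{x_0}(\Sigma) = \psi) = Z_{\lbox,\beta}^{-1}\, |G|^{|\vertices|-1} \prod_{p\in\plaquettes}\varphi_\beta(\psi(\xi_p)).
\]
Since $|G|^{|\vertices|-1}$ is a constant not depending on $\psi$, the right-hand side is proportional to $\prod_{p\in\plaquettes}\varphi_\beta(\psi(\xi_p))$, which is exactly the (unnormalized) weight defining $\nu_{\lbox,\beta}$ in Definition \ref{def:nu}. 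As both $\psi_{x_0}(\Sigma)$ and $\nu_{\lbox,\beta}$ are probability measures on the finite set $\homspace$ with proportional mass functions, they are equal; in particular this forces $Z^\nu_{\lbox,\beta} = |G|^{-(|\vertices|-1)} Z_{\lbox,\beta}$, though we do not need this identification.

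The only real content here is the conjugacy-invariance observation in the first step, which lets us descend the weight $e^{-\beta S_\lbox}$ through $\psi_{x_0}$; everything else is bookkeeping with Lemma \ref{lemma:psi-many-to-one}. I do not anticipate a genuine obstacle — the potential subtlety is merely making sure that $\sigma_p$ and $\psi(\xi_p)$ are related by conjugation for the \emph{same} choice of boundary loop $\gamma_p$, which is guaranteed by Definition \ref{def:xi-gamma} (where $\xi_p$ is built from precisely the loop used in \eqref{eq:sigma-p-def}) together with Lemma \ref{lemma:psi-conjugacy-preserve}.
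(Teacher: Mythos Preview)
Your proof is correct and follows essentially the same approach as the paper's own proof: compute the pushforward probability as a sum over the fiber, use Lemma~\ref{lemma:psi-conjugacy-preserve} together with conjugacy invariance of $\varphi_\beta$ to see that the weight is constant on each fiber, and use Lemma~\ref{lemma:psi-many-to-one} to count the fiber size. The only addition is that you also identify the normalizing constant explicitly, which the paper omits.
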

\begin{proof}
For any $\psi \in \homspace$, we have that
\[ \p(\psi_{x_0}(\Sigma) = \psi) = Z_{\lbox, \beta}^{-1} \sum_{\substack{\sigma \in G^\edges \\ \psi_{x_0}(\sigma) = \psi}} \prod_{p \in \plaquettes} \varphi_\beta(\sigma_p). \]
By Lemma \ref{lemma:psi-conjugacy-preserve}, for any $\sigma \in G^\edges$ such that $\psi_{x_0}(\sigma) = \psi$, we have that $\varphi_\beta(\sigma_p) = \varphi_\beta(\psi(\xi_p))$ for all $p \in \plaquettes$. Combining this with Lemma \ref{lemma:psi-many-to-one}, we obtain 
\[ \p(\psi_{x_0}(\Sigma) = \psi) \propto |G|^{|\vertices| - 1} \prod_{p \in \plaquettes} \varphi_\beta(\psi(\xi_p)) \propto \prod_{p \in \plaquettes} \varphi_\beta(\psi(\xi_p)). \]
The desired result now follows.
\end{proof}

\begin{lemma}\label{lemma:switch-to-random-homomorphism}
Let $k \geq 1$, and let $f := G^k \ra \C$ be a conjugacy invariant  function. Let $\gamma_1, \ldots, \gamma_k$ be closed loops in $\lbox$. Let $\Sigma \sim \mu_{\lbox, \beta}$, and let $\Psi \sim \nu_{\lbox, \beta}$. Then
\[ \E f(\Sigma_{\gamma_1}, \ldots, \Sigma_{\gamma_k}) = \E f(\Psi(\xi_{\gamma_1}), \ldots, \Psi(\xi_{\gamma_k})).\]
\end{lemma}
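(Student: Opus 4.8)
The plan is to reduce the statement about $\Sigma$ to a statement about the random homomorphism $\homsym_{x_0}(\Sigma)$ using Lemma \ref{lemma:psi-conjugacy-preserve}, and then to invoke Lemma \ref{lemma:psi-Sigma-distribution} to replace $\homsym_{x_0}(\Sigma)$ by $\Psi$.

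First I would fix, for each $i \in [k]$, the initial vertex $x_i$ of the loop $\gamma_i$ together with the path $\ell(x_0, x_i)$ chosen in Definition \ref{def:xi-gamma}, so that $\xi_{\gamma_i} = \ell(x_0, x_i)\,\gamma_i\,\ell(x_0, x_i)^{-1} \in \pi_1(\oneskel, x_0)$. Applying Lemma \ref{lemma:psi-conjugacy-preserve} with $\wloop = \gamma_i$ and $\ell = \ell(x_0, x_i)$ shows that for every $\sigma \in G^\edges$ the holonomy $\sigma_{\gamma_i}$ is conjugate in $G$ to $(\homsym_{x_0}(\sigma))(\xi_{\gamma_i})$, i.e.\ there is $h_i = h_i(\sigma) \in G$ with $\sigma_{\gamma_i} = h_i^{-1}\,(\homsym_{x_0}(\sigma))(\xi_{\gamma_i})\,h_i$. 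Here I use that the loop $\ell^{-1}\wloop\ell$ appearing in Lemma \ref{lemma:psi-conjugacy-preserve} and the loop $\ell\wloop\ell^{-1}$ defining $\xi_{\gamma_i}$ are conjugate elements of $\pi_1(\oneskel, x_0)$, hence have conjugate images under the homomorphism $\homsym_{x_0}(\sigma)$.

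Since $f$ is conjugacy invariant — and crucially the definition of conjugacy invariance permits an independent conjugating element in each of the $k$ coordinates — it follows that for every $\sigma \in G^\edges$,
\[ f(\sigma_{\gamma_1}, \ldots, \sigma_{\gamma_k}) = f\big((\homsym_{x_0}(\sigma))(\xi_{\gamma_1}), \ldots, (\homsym_{x_0}(\sigma))(\xi_{\gamma_k})\big). \]
Taking $\sigma = \Sigma$ and then expectations yields $\E f(\Sigma_{\gamma_1}, \ldots, \Sigma_{\gamma_k}) = \E f\big((\homsym_{x_0}(\Sigma))(\xi_{\gamma_1}), \ldots, (\homsym_{x_0}(\Sigma))(\xi_{\gamma_k})\big)$.

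Finally, the right-hand side is the expectation of the fixed function $\homsym \mapsto f(\homsym(\xi_{\gamma_1}), \ldots, \homsym(\xi_{\gamma_k}))$ on $\homspace$ evaluated at the random homomorphism $\homsym_{x_0}(\Sigma)$; by Lemma \ref{lemma:psi-Sigma-distribution}, $\homsym_{x_0}(\Sigma) \stackrel{d}{=} \Psi \sim \nu_{\lbox, \beta}$, so this equals $\E f(\Psi(\xi_{\gamma_1}), \ldots, \Psi(\xi_{\gamma_k}))$, which is the claim. I do not expect any real obstacle here; the only step requiring care is keeping the path and conjugation conventions of Lemma \ref{lemma:psi-conjugacy-preserve} and Definition \ref{def:xi-gamma} consistent, which is dealt with by the remark on conjugate loops above.
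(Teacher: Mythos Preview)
Your argument is correct and follows exactly the paper's proof: apply Lemma~\ref{lemma:psi-conjugacy-preserve} together with conjugacy invariance of $f$ to obtain the pointwise identity $f(\Sigma_{\gamma_1},\ldots,\Sigma_{\gamma_k}) = f\big((\psi_{x_0}(\Sigma))(\xi_{\gamma_1}),\ldots,(\psi_{x_0}(\Sigma))(\xi_{\gamma_k})\big)$, then invoke Lemma~\ref{lemma:psi-Sigma-distribution}. Your additional remark reconciling the $\ell^{-1}\gamma\ell$ versus $\ell\gamma\ell^{-1}$ conventions is a useful bit of care the paper glosses over.
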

\begin{proof}
By Lemma \ref{lemma:psi-conjugacy-preserve} and the assumption that $f$ is conjugacy invariant, we have that 
\[ f(\Sigma_{\wloop_1}, \ldots, \Sigma_{\wloop_k}) = f\big((\psi_{x_0}(\Sigma))(\xi_{\gamma_1}), \ldots, (\psi_{x_0}(\Sigma))(\xi_{\gamma_k})\big). \]
By Lemma \ref{lemma:psi-Sigma-distribution}, we have that $\psi_{x_0}(\Sigma) \sim \nu_{\lbox, \beta}$. The desired result follows.
\end{proof}

We close this section by noting that $\psi_{x_0}$ is a bijection when restricted to certain subsets of $G^\edges$.

\begin{defn}
Given a spanning tree $T$ of $\oneskel$, define
\[ GF(T) := \{\sigma \in G^\edges : \sigma_e = \groupid \text{ for all $e \in \edges$}.\}.\]
Here ``$GF$" stands for ``gauge-fixed".
\end{defn}

The following lemma is essentially \cite[Lemma 4.1.6]{Cao2020}. Due to the differences in notation between that paper and the present paper, we provide a proof of this lemma in Appendix \ref{appendix:edge-configurations-and-homomorphisms}.

\begin{lemma}\label{lemma:gauge-fixed-bijection-homomorphism}
For any spanning tree $T$ of $\oneskel$, $\psi_{x_0} : GF(T) \ra \homspace$ is a bijection. Moreover, $\supp(\sigma) = \supp(\psi_{x_0}(\sigma))$ for all $\sigma \in GF(T)$. For any loop $\wloop$ in $\lbox$, and any path $\ell$ in $\lbox$ from $x_0$ to the initial vertex of $\wloop$, we have that $\sigma_\wloop$ is conjugate to $(\psi_{x_0}(\sigma))(\ell^{-1} \wloop \ell)$.
\end{lemma}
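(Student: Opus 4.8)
The lemma packages three claims: that $\psi_{x_0}$ restricts to a bijection $GF(T) \ra \homspace$, that it preserves supports on $GF(T)$, and a conjugacy statement that is literally Lemma~\ref{lemma:psi-conjugacy-preserve}. I would dispatch the last two quickly and spend the real effort on bijectivity.

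\textbf{Bijectivity.} (Here I read the definition of $GF(T)$ as ``$\sigma_e = \groupid$ for all $e$ in the spanning tree $T$''.) Since a spanning tree of $\oneskel$ has exactly $|\vertices| - 1$ edges and the remaining $|\edges| - (|\vertices|-1)$ edges are unconstrained, $|GF(T)| = |G|^{|\edges| - |\vertices| + 1}$. On the other hand, Lemma~\ref{lemma:psi-many-to-one} tells us the fibers of $\psi_{x_0} : G^\edges \ra \homspace$ all have size $|G|^{|\vertices|-1}$ (in particular $\psi_{x_0}$ is onto), so $|\homspace| = |G|^{|\edges|}/|G|^{|\vertices|-1} = |G|^{|\edges|-|\vertices|+1} = |GF(T)|$. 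Thus it suffices to prove $\psi_{x_0}$ is injective on $GF(T)$, and for that it is enough to recover each coordinate $\sigma_e$ from $\psi_{x_0}(\sigma)$. Given an edge $e = (x,y)$, let $P_x$ and $P_y$ be the unique paths in $T$ from $x_0$ to $x$ and to $y$, and set $\xi_e := P_x\, e\, P_y^{-1} \in \pi_1(\oneskel, x_0)$. Every edge of $P_x$ and $P_y$ lies in $T$, so for $\sigma \in GF(T)$ each such edge (and its reverse) is assigned $\groupid$; unwinding Definition~\ref{def:psi-x0} along the representative $P_x\, e\, P_y^{-1}$ then gives $(\psi_{x_0}(\sigma))(\xi_e) = \groupid \cdot \sigma_e \cdot \groupid = \sigma_e$. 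Hence $\psi_{x_0}(\sigma)$ determines $\sigma$, which yields injectivity and therefore the bijection. (One could instead build the inverse by hand, setting $\sigma_e := \psi(\xi_e)$ off $T$ and $\sigma_e := \groupid$ on $T$ and checking agreement on the free generators $\{\xi_e : e \notin T\}$ of $\pi_1(\oneskel, x_0)$, but the cardinality argument is cleaner.)

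\textbf{Support preservation and the conjugacy statement.} By Definition~\ref{def:xi-gamma}, $\xi_p = \xi_{\gamma_p} = \ell(x_0,x)\,\gamma_p\,\ell(x_0,x)^{-1}$ where $\gamma_p$ is exactly the boundary loop of $p$ used in \eqref{eq:sigma-p-def}, so $\sigma_{\gamma_p} = \sigma_p$. Applying Lemma~\ref{lemma:psi-conjugacy-preserve} with $\wloop = \gamma_p$ and $\ell = \ell(x_0,x)$, we get that $\sigma_p$ is conjugate to $(\psi_{x_0}(\sigma))(\xi_p)$; since conjugate elements of $G$ equal $\groupid$ or not simultaneously, $p \in \supp(\sigma) \iff \sigma_p \neq \groupid \iff (\psi_{x_0}(\sigma))(\xi_p) \neq \groupid \iff p \in \supp(\psi_{x_0}(\sigma))$. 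This holds for every $p \in \plaquettes$, so $\supp(\sigma) = \supp(\psi_{x_0}(\sigma))$. The final sentence of the lemma is just Lemma~\ref{lemma:psi-conjugacy-preserve} verbatim (it holds for all $\sigma \in G^\edges$, in particular for $\sigma \in GF(T)$), so there is nothing further to prove.

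\textbf{Main obstacle.} There is no serious obstacle; the only slightly delicate point is reading off $(\psi_{x_0}(\sigma))(\xi_e) = \sigma_e$ for $\sigma \in GF(T)$ directly from Definition~\ref{def:psi-x0}, using that tree edges carry $\groupid$, and then invoking Lemma~\ref{lemma:psi-many-to-one} to pin down $|\homspace|$ so that the clean ``injection between finite sets of equal size $\Rightarrow$ bijection'' argument applies. The one genuine thing to flag is the apparent typo in the definition of $GF(T)$: everything above assumes it means $\sigma_e = \groupid$ for all $e \in T$ rather than all $e \in \edges$.
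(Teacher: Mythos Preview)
Your proof is correct and essentially matches the paper's: the injectivity argument (reading off $\sigma_e$ as $(\psi_{x_0}(\sigma))([a_e])$ via tree paths) is exactly what the paper does, and the support/conjugacy claims are likewise reduced to Lemma~\ref{lemma:psi-conjugacy-preserve}. The only cosmetic difference is that the paper obtains surjectivity by pointing to the explicit inverse you mention parenthetically (set $\sigma_e := \psi([a_e])$ off $T$), whereas you instead match cardinalities via Lemma~\ref{lemma:psi-many-to-one}; both are fine, and you are right to flag the typo in the definition of $GF(T)$.
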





\section{A general swapping argument}\label{section:swapping-argument}

In this section, we give the general ``swapping map" idea behind our proof of decay of correlations, as previously sketched after the statement of Theorem \ref{thm:main-general}.

\begin{defn}[Swapping map]\label{def:swapping-map}
Let $E \sse \homspace^2$. Let $h_1, h_2 : \homspace \ra \C$ be arbitrary functions. Let $T : E \ra E$. We say that $T$ is a swapping map with respect to $(E, h_1, h_2, \nu_{\lbox, \beta})$ if $T$ is a bijection, and if additionally the following hold for all $(\psi_1, \psi_2) \in E$:
\begin{enumerate}
    \item Let $\nu_{\lbox, \beta}^{\otimes 2}$ be the two-fold product of $\nu_{\lbox, \beta}$ on $\homspace^2$. Then $\nu_{\lbox, \beta}^{\otimes 2}(T(\psi_1, \psi_2)) = \nu_{\lbox, \beta}^{\otimes 2}((\psi_1, \psi_2))$.
    \item Let $(\tilde{\psi}_1, \tilde{\psi}_2) = T(\psi_1, \psi_2)$. Then
    \[ h_1(\psi_1) h_2(\psi_2) =  h_1(\tilde{\psi}_1) h_2(\tilde{\psi}_1). \]
\end{enumerate}
\end{defn}


The following lemma shows that the existence of a swapping map leads to a covariance bound.

\begin{lemma}\label{lemma:swapping-gives-covariance-bound}
Let $E \sse \homspace^2$. Let $h_1, h_2 : \homspace \ra \C$ be arbitrary functions. Let $T : E \ra E$ be a swapping map with respect to $(E, h_1, h_2, \nu_{\lbox, \beta})$. Let $\Psi_1, \Psi_2 \stackrel{i.i.d.}{\sim} \nu_{\lbox, \beta}$. Then
\[ |\mathrm{Cov}(h_1(\Psi_1), h_2(\Psi_1))| \leq 2 \|h_1\|_\infty \|h_2\|_\infty \p((\Psi_1, \Psi_2) \notin E). \]
\end{lemma}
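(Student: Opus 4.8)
The plan is to run the heuristic argument sketched in the introduction, but carefully tracking the error incurred by the fact that the swapping map $T$ is only defined on the (possibly proper) subset $E \subseteq \homspace^2$. First I would set $E^c := \homspace^2 \setminus E$ and decompose
\[ \E[h_1(\Psi_1) h_2(\Psi_2)] = \E[h_1(\Psi_1) h_2(\Psi_2) \ind_E(\Psi_1, \Psi_2)] + \E[h_1(\Psi_1) h_2(\Psi_2) \ind_{E^c}(\Psi_1, \Psi_2)], \]
where $\ind_E$ denotes the indicator of $E$. The second term is bounded in absolute value by $\|h_1\|_\infty \|h_2\|_\infty \, \p((\Psi_1, \Psi_2) \notin E)$.

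For the first term, I would use property (1) of the swapping map together with the fact that $\Psi_1, \Psi_2$ are i.i.d.\ from $\nu_{\lbox,\beta}$, so that $(\Psi_1, \Psi_2) \sim \nu_{\lbox,\beta}^{\otimes 2}$. Since $T : E \to E$ is a bijection preserving $\nu_{\lbox,\beta}^{\otimes 2}$ (restricted to $E$), for any function $F : E \to \C$ we have $\sum_{(\psi_1,\psi_2) \in E} \nu_{\lbox,\beta}^{\otimes 2}(\psi_1,\psi_2) F(T(\psi_1,\psi_2)) = \sum_{(\psi_1,\psi_2) \in E} \nu_{\lbox,\beta}^{\otimes 2}(\psi_1,\psi_2) F(\psi_1,\psi_2)$ — this is just re-indexing the sum via the bijection $T$ and using that the $\nu_{\lbox,\beta}^{\otimes 2}$-weights are matched by property (1). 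Applying this with $F(\psi_1,\psi_2) = h_1(\psi_1) h_2(\psi_2)$ and then invoking property (2), which says $h_1(\psi_1)h_2(\psi_2) = h_1(\tilde\psi_1) h_2(\tilde\psi_1)$ where $(\tilde\psi_1,\tilde\psi_2) = T(\psi_1,\psi_2)$, I get
\[ \E[h_1(\Psi_1) h_2(\Psi_2) \ind_E(\Psi_1,\Psi_2)] = \E[ h_1(\Psi_1) h_2(\Psi_1) \ind_E(\Psi_1, \Psi_2)]. \]
Then I would convert the right-hand side back to the full expectation over $\homspace^2$ at the cost of another $\|h_1\|_\infty \|h_2\|_\infty \, \p((\Psi_1,\Psi_2) \notin E)$ error term, obtaining $\E[h_1(\Psi_1)h_2(\Psi_1)] = \E[h_1(\Psi_1)] \E[h_2(\Psi_1)]$... wait — more precisely, $\E[h_1(\Psi_1)h_2(\Psi_1)\ind_E]$ is within $\|h_1\|_\infty\|h_2\|_\infty \p((\Psi_1,\Psi_2)\notin E)$ of $\E[h_1(\Psi_1)h_2(\Psi_1)]$, since conditionally on $\Psi_1$ the event $\{(\Psi_1,\Psi_2)\notin E\}$ still has the stated total probability.

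Combining these estimates: $\E[h_1(\Psi_1)h_2(\Psi_2)]$ differs from $\E[h_1(\Psi_1)h_2(\Psi_1)]$ by at most $2\|h_1\|_\infty\|h_2\|_\infty \p((\Psi_1,\Psi_2)\notin E)$. But $\E[h_1(\Psi_1)h_2(\Psi_2)] = \E[h_1(\Psi_1)]\E[h_2(\Psi_2)] = \E[h_1(\Psi_1)]\E[h_2(\Psi_1)]$ by independence and identical distribution, so $|\mathrm{Cov}(h_1(\Psi_1), h_2(\Psi_1))| = |\E[h_1(\Psi_1)h_2(\Psi_1)] - \E[h_1(\Psi_1)]\E[h_2(\Psi_1)]| \leq 2\|h_1\|_\infty\|h_2\|_\infty \p((\Psi_1,\Psi_2)\notin E)$, which is the claim.

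The only genuinely delicate point is the re-indexing step: one must be careful that property (1) only asserts the weight-matching for pairs \emph{inside} $E$, and that property (2) involves the slightly unusual expression $h_1(\tilde\psi_1) h_2(\tilde\psi_1)$ (both arguments the \emph{first} component of $T(\psi_1,\psi_2)$), so the bijection bookkeeping must be done on the restricted sum over $E$ and the transition back to the unrestricted expectation handled separately via the trivial $\|\cdot\|_\infty$ bounds. Everything else is routine; there is no real obstacle, and the constant $2$ comes out exactly as stated since we incur two error terms each bounded by $\|h_1\|_\infty\|h_2\|_\infty\p((\Psi_1,\Psi_2)\notin E)$.
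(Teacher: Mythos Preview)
Your proposal is correct and follows essentially the same argument as the paper: both establish the key identity $\E[h_1(\Psi_1)h_2(\Psi_2)\ind_E] = \E[h_1(\Psi_1)h_2(\Psi_1)\ind_E]$ by re-indexing the sum via the bijection $T$ using properties (1) and (2), and then pick up two error terms of size $\|h_1\|_\infty\|h_2\|_\infty\,\p((\Psi_1,\Psi_2)\notin E)$ from the complement. The only cosmetic difference is that the paper states the key identity as a claim first and then proves it, whereas you proceed linearly; the underlying computation is identical.
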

\begin{proof}
Let $F := \{(\Psi_1, \Psi_2) \in E\}$. We claim that
\beq\label{eq:swapping-claim} \E (h_1(\Psi_1) h_2(\Psi_1) \ind_F) = \E (h_1(\Psi_1) h_2(\Psi_2) \ind_F). \eeq
Given this claim, we obtain
\[\begin{split}
\mathrm{Cov}(h_1(\Psi_1), h_2(\Psi_1)) &= \E (h_1(\Psi_1) h_2(\Psi_1)) - \E (h_1(\Psi_1) h_2(\Psi_2)) \\
&= \E (h_1(\Psi_1) h_2(\Psi_1) \ind_{F^c}) - \E (h_1(\Psi_1) h_2(\Psi_2) \ind_{F^c}).
\end{split}\]
We may then trivially bound both
\[ |\E (h_1(\Psi_1) h_2(\Psi_1) \ind_{F^c})|, |\E (h_1(\Psi_1) h_2(\Psi_2) \ind_{F^c})| \leq \|h_1\|_\infty \|h_2\|_\infty \p(F^c), \]
from which the desired result follows.

Thus it just remains to show the claim \eqref{eq:swapping-claim}. We have that
\[ \E (h_1(\Psi_1) h_2(\Psi_1) \ind_F) = \sum_{(\psi_1, \psi_2) \in E} \nu_{\lbox, \beta}^{\otimes 2} ((\psi_1, \psi_2)) h_1(\psi_1) h_2(\psi_1). \]
Since $T : E \ra E$ is a bijection, the right hand side above is equal to
\[ \sum_{(\psi_1, \psi_2) \in E} \nu_{\lbox, \beta}^{\otimes 2} (T(\psi_1, \psi_2)) h_1(T_1(\psi_1, \psi_2)) h_2(T_1(\psi_1, \psi_2)), \]
where $T_1 : E \ra \homspace$ is the first coordinate function of $T$. Using conditions (1) and (2) of Definition \ref{def:swapping-map}, we obtain that the above is further equal to
\[ \sum_{(\psi_1, \psi_2) \in E} \nu_{\lbox, \beta}^{\otimes 2} ((\psi_1, \psi_2)) h_1(\psi_1) h_2(\psi_2) = \E (h_1(\Psi_1) h_2(\Psi_2) \ind_F), \]
as desired.
\end{proof}

The following proposition shows that swapping maps exist for functions of the form specified in Theorem \ref{thm:main-general}. 

\begin{prop}\label{prop:existence-of-swapping-map}
Let $\beta, L, B_1, B_2$ be as in Theorem \ref{thm:main-general}. There exists a set $E(B_1, B_2) \sse \homspace^2$ such that for $\Psi_1, \Psi_2 \stackrel{i.i.d.}{\sim} \nu_{\lbox, \beta}$, we have
\[ \p((\Psi_1, \Psi_2) \notin E(B_1,  B_2)) \leq 2 (4 \cdot 10^{24} |G|^2)^{|B_1| + |B_2|} e^{-(\beta / 2) \Delta_G (L-1)}. \]
Moreover, there exists a bijection $T : E(B_1,  B_2) \ra E(B_1,  B_2)$ such that $T$ is a swapping map with respect to $(E(B_1,  B_2), h_1, h_2, \nu_{\lbox, \beta})$, for any functions $h_1, h_2 : \homspace \ra \C$ of the following form. Let $k_1, k_2 \geq 1$, and let $f_i : G^{k_i} \ra \C$ be a conjugacy invariant function for $i = 1, 2$. For $i = 1, 2$, let $\wloop^{(i)}_1, \ldots, \wloop^{(i)}_{k_i}$ be closed loops contained in $B_i$, and then for $\psi \in \homspace$, let $h_i(\psi) := f_i(\psi(\xi_{\gamma^{(i)}_1}), \ldots, \psi(\xi_{\gamma^{(i)}_{k_i}}))$.
\end{prop}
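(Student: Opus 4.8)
The plan is to build the swapping map $T$ out of the local structure of $\nu_{\lbox,\beta}$ that is encoded by the support sets $\supp(\psi)$ and the "vortex decomposition" referred to in the excerpt (Lemma \ref{lemma:vortices-well-separated-gauge-fix-decomp} and Lemma \ref{lemma:bijection-existence}/Corollary \ref{cor:bijection-existence}, not reproduced above). The key point is that, under $\nu_{\lbox,\beta}$, the configuration $\psi$ decomposes into independent pieces supported on well-separated "knots" (connected clusters of plaquettes where $\psi(\xi_p)\neq \groupid$), and the distribution of $\psi$ restricted to plaquettes near $B_1$, conditionally on the event that no knot straddles the gap between $B_1$ and $B_2$, factorizes as a product over the local region near $B_1$ and the rest. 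Concretely, I would define $E(B_1,B_2)$ to be the set of pairs $(\psi_1,\psi_2)$ such that no knot of $\psi_1$ and no knot of $\psi_2$ comes within $\ell^\infty$ distance, say, $L-1$ of \emph{both} $B_1$ and $B_2$ — equivalently, the knots touching a neighborhood of $B_1$ are disjoint (in the sense of not being connected through that neighborhood) from the knots touching a neighborhood of $B_2$. On this set one can cleanly "cut" each configuration along the gap.

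First I would make precise the neighborhoods: let $\bar B_i$ be the set of plaquettes within $\ell^\infty$ distance roughly $L/2$ of $B_i$ (chosen so that $\bar B_1$ and $\bar B_2$ are disjoint since $B_1,B_2$ are at distance $\geq L$), and let $N_i$ be the union of all knots of $\psi$ that intersect $\bar B_i$. On $E(B_1,B_2)$, by construction the knots of $\psi_1$ split into "those relevant to $B_1$" and "the rest", and likewise for $\psi_2$. Using Corollary \ref{cor:bijection-existence} and the product structure of $\nu_{\lbox,\beta}$ over knots, the law of $(\psi_1|_{N_1^{(1)}}, \psi_1|_{\text{rest}})$ is a product measure, and similarly for $\psi_2$. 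I would then define $T(\psi_1,\psi_2)=(\tilde\psi_1,\tilde\psi_2)$ by swapping the "$B_2$-relevant part" of $\psi_1$ with the "$B_2$-relevant part" of $\psi_2$: $\tilde\psi_1$ agrees with $\psi_1$ on the $B_1$-side and with $\psi_2$ on the $B_2$-side, and $\tilde\psi_2$ agrees with $\psi_2$ on the $B_1$-side and with $\psi_1$ on the $B_2$-side. (Here I am being slightly schematic; the actual gluing of homomorphisms will be done through the gauge-fixed representatives via Lemma \ref{lemma:gauge-fixed-bijection-homomorphism} and the results of Section \ref{section:knot-decomposition}, since one cannot naively "restrict" a homomorphism to a subset of plaquettes.) One checks $T$ is an involution, hence a bijection of $E(B_1,B_2)$ onto itself, and that it preserves $\nu_{\lbox,\beta}^{\otimes 2}$ because both coordinates are re-assembled from independent product pieces with the same marginals — this is condition (1). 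Condition (2) holds because the loops $\gamma^{(i)}_j$ lie in $B_i\subseteq \bar B_i$, so $h_1(\psi)$ depends only on the $B_1$-side of $\psi$ and $h_2(\psi)$ only on the $B_2$-side; after swapping, the $B_1$-side of $\tilde\psi_1$ is the $B_1$-side of $\psi_1$ and the $B_2$-side of $\tilde\psi_1$ is the $B_2$-side of $\psi_2$, whence $h_1(\tilde\psi_1)h_2(\tilde\psi_1)=h_1(\psi_1)h_2(\psi_2)$.

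Finally, to bound $\p((\Psi_1,\Psi_2)\notin E(B_1,B_2))$, I would union-bound over the "bad" events that some knot of $\Psi_1$ (or $\Psi_2$) connects $\bar B_1$ to $\bar B_2$. A knot connecting the two neighborhoods has to cross the gap, so it contains a connected set of plaquettes of diameter at least $\sim L$; the activity/probability estimate for a knot of size $m$ from \cite{Cao2020} (the geometric-type bound with the $e^{-(\beta/2)\Delta_G}$ factor per plaquette and the $|G|$-type enumeration factor per plaquette) decays like $(C|G|)^m e^{-(\beta/2)\Delta_G \cdot (\text{something} \gtrsim L)}$, and summing over the (boundedly many, given $|B_1|+|B_2|$) possible anchor locations and over $m$ produces the stated bound $2(4\cdot 10^{24}|G|^2)^{|B_1|+|B_2|}e^{-(\beta/2)\Delta_G(L-1)}$. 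The combinatorial constant $4\cdot 10^{24}$ and the factor $|G|^2$ raised to $|B_1|+|B_2|$ will come from carefully accounting for the number of ways knots can attach to $\bar B_1\cup \bar B_2$ together with the per-plaquette enumeration bound (Lemma \ref{lemma:homomorphism-number-bound}).

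I expect the main obstacle to be the construction of $T$ itself — specifically, making rigorous the idea of "swapping the $B_2$-relevant parts of two homomorphisms." Homomorphisms in $\homspace$ do not restrict to subsets of plaquettes, so the swap must be performed on gauge-fixed edge representatives and then shown to be well-defined independently of the choices (spanning tree, paths $\ell(x_0,x)$), and one must verify that after the swap the result is still a genuine element of $\homspace$ with the correct support (a disjoint union of the appropriate knots). This is exactly where the machinery of Section \ref{section:knot-decomposition} and Lemma \ref{lemma:bijection-existence}/Corollary \ref{cor:bijection-existence} will have to do the heavy lifting; the probabilistic estimate, by contrast, is a relatively routine Peierls-type argument given the knot bounds already established in \cite{Cao2020}.
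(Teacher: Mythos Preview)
Your overall strategy --- decompose each $\psi_i$ into well-separated knot pieces, swap the pieces near $B_2$, and bound the bad event via a Peierls argument --- is the right one and matches the paper. However, there is a genuine gap in the construction of $T$, and you have correctly flagged exactly where it lies.

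The issue is that you take the knot decomposition of $\supp(\psi_1)$ and of $\supp(\psi_2)$ \emph{separately}. Suppose you want $\tilde\psi_1$ to consist of the ``$B_1$-side and rest'' of $\psi_1$ together with the ``$B_2$-side'' of $\psi_2$. These two collections of plaquettes need not be well separated by any good rectangle: a ``rest'' knot of $\psi_1$ (touching neither $\bar B_1$ nor $\bar B_2$) may sit adjacent to, or even overlap, a $B_2$-side knot of $\psi_2$, so Corollary~\ref{cor:bijection-existence} gives no way to glue them into a single homomorphism. Equivalently, the bijections $\Theta$ used to decompose $\psi_1$ and $\psi_2$ are built from \emph{different} partitions, so there is no common $\Theta^{-1}$ with which to reassemble the swapped pieces. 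For the same reason it is unclear that $T$ maps $E$ to itself: the knot structure of $\tilde\psi_1$ need not match that of either $\psi_1$ or $\psi_2$.

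The paper's fix is simple but decisive: take the knot decomposition of the \emph{joint} set
\[
\supp(\psi_1)\cup\supp(\psi_2)\cup B_1\cup B_2 \;=\; K_1\cup\cdots\cup K_m.
\]
Since each $\supp(\psi_i)$ is contained in this set, Corollary~\ref{cor:bijection-existence} decomposes \emph{both} $\psi_1$ and $\psi_2$ along the \emph{same} partition, via the \emph{same} bijection $\Theta=\Theta(P)$. The event $E(B_1,B_2)$ is then simply that $B_1$ and $B_2$ lie in different knots $K_{j_1}\neq K_{j_2}$, and $T$ just swaps the $j_2$-th coordinates of $\Theta(\psi_1)$ and $\Theta(\psi_2)$. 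The joint set above is invariant under this swap, so the knot decomposition (and hence $\Theta$ and $j_2$) is the same after swapping, and $T$ is manifestly an involution on $E$. Including $B_1\cup B_2$ in the set whose knot decomposition is taken is what guarantees well-defined indices $j_1,j_2$ in the first place.

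Your Peierls argument for the probability bound is essentially the paper's; once the event is rephrased as ``$B_1$ and $B_2$ lie in the same knot of the joint support,'' Lemma~\ref{lemma:knot-size-lower-bound} forces that knot to have size at least $|B_1|+|B_2|+L-1$, and Lemmas~\ref{lemma:knot-number-bound}, \ref{lemma:knot-appear-prob-bound-by-Phi}, \ref{lemma:Phi-bound} give the stated bound.
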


We next note that Theorem \ref{thm:main-general} follows directly from Proposition \ref{prop:existence-of-swapping-map}.

\begin{proof}[Proof of Theorem \ref{thm:main-general}]
This is a direct consequence of Lemmas \ref{lemma:switch-to-random-homomorphism} and \ref{lemma:swapping-gives-covariance-bound}, and Proposition \ref{prop:existence-of-swapping-map}.
\end{proof}

The remainder of the paper is devoted to proving Proposition \ref{prop:existence-of-swapping-map}. In Section \ref{section:knot-decomposition}, we introduce the technical tools that are essential to the ensuing arguments. Then in Section \ref{section:construction-of-swapping-map}, we construct the event $E(B_1,  B_2)$ and the map $T$, and finally in Section \ref{section:probability-bound}, we bound the probability $\p((\Psi_1, \Psi_2) \notin E(B_1,  B_2))$.

\section{Knot decomposition}\label{section:knot-decomposition}

In this section, we review the concept of ``knot decomposition" as well as some other results which were introduced in \cite{Cao2020}. These results will allow us to prove Lemma \ref{lemma:bijection-existence}, which (along with Corollary \ref{cor:bijection-existence}, its immediate corollary) is the main result of this section. Recall the discussion of 2-complexes at the beginning of Section \ref{section:preliminaries}. We start with a series of definitions, most which have previously appeared in \cite[Section 4]{Cao2020}.

\begin{defn}
Given a rectangle $B$ contained in $\lbox$, let $S_2(B)$\label{notation:S-2-B} denote the 2-complex obtained by including all plaquettes of $B$. Let $\partial S_2(B)$\label{notation:partial-S-2-B} be the 2-complex obtained by including all plaquettes which are on the boundary of $B$, but not on the boundary of $\lbox$. Note if $B$ is contained in the interior of $\lbox$, then $\partial S_2(B)$ is simply the 2-complex made of all boundary plaquettes of $B$. Let $S_2^c(B)$\label{notation:S-2-c-B} be the 2-complex obtained by including all plaquettes of $\lbox$ that are not in $B$, as well as all plaquettes in $\partial S_2(B)$.
\end{defn}

\begin{defn}[Well separated]\label{def:well-separated}
Given plaquette sets $P_1, P_2 \sse \plaquettes$, we say that $P_1, P_2$ are well separated, or $P_1$ is well separated from $P_2$, if there exists a rectangle $B$ in $\lbox$ such that $P_1 \sse S_2(B)$, $P_2 \sse S_2^c(B)$, and no plaquettes of $P_1$ or $P_2$ are contained in $\partial S_2(B)$. For such a $B$, we say that $P_1, P_2$ are well separated by $B$, or that $B$ well separates $P_1, P_2$. Note this definition is not symmetric in $P_1, P_2$.
\end{defn}

Recall that a topological space is said to be simply connected if it is path connected and has trivial fundamental group. 

\begin{defn}[Good rectangle]\label{def:good-rectangle}
Let $B$ be a rectangle in $\lbox$. We say that $B$ is good if $S_2(B)$, $S_2^c(B)$, and $\ptl S_2(B)$ are all simply connected.
\end{defn}

\begin{remark}
The motivation for the preceding definition is to have Lemma \ref{lemma:vortices-well-separated-gauge-fix-decomp} (see also \cite[Lemmas 4.1.9 and 4.2.21]{Cao2020}). Actually, this lemma should probably hold for general rectangles, at the cost of an additional argument that we prefer not to give (we are trying to keep the topological arguments to a minimum). Thus, we will work exclusively with good rectangles in this paper.
\end{remark}

The following lemma gives the main examples of good rectangles that will be relevant for us.

\begin{lemma}\label{lemma:good-rectangle-examples}
Let $B$ be a rectangle in $\lbox$ such that either: (a) all side lengths of $B$ are strictly less than the side length of $\lbox$, or (b) the vertices of $B$ is a set of the form $\{x \in \vertices : x_i \leq k\}$ or $\{x \in \vertices : x_i \geq k\}$ for some $i \in [4]$, $k \in \Z$ (i.e., $B$ is the intersection of $\lbox$ and a half space which is parallel to one of the coordinate axes). Then $B$ is a good rectangle.
\end{lemma}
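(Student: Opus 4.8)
The plan is to recast the three simple-connectedness assertions as statements about fundamental groups of cube complexes, settle those for $S_2(B)$ and $\partial S_2(B)$ by inspection (respectively by an explicit deformation retraction), and deduce the one for $S_2^c(B)$ from these two via van Kampen. Two standard facts are used throughout: the inclusion of the $2$-skeleton of a CW complex induces a bijection on $\pi_0$ and an isomorphism on $\pi_1$; and $S_2(\lbox)$, being the $2$-skeleton of the contractible box $\lbox$, is simply connected. Also, directly from the definitions: $S_2(B)$ is the $2$-skeleton of the box $B$, and $S_2(\lbox) = S_2(B) \cup S_2^c(B)$ with $S_2(B) \cap S_2^c(B) = \partial S_2(B)$. (Throughout I take $B$ to be at least two-dimensional; a degenerate $B$ of dimension $\leq 1$ carries no plaquettes and the statement is read with the obvious convention.)

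Since $B$ is convex, hence contractible, $S_2(B)$ is simply connected, and no hypothesis is needed. For $\partial S_2(B)$: if $B$ is not full-dimensional then its topological boundary in $\R^4$ is all of $B$, so $\partial S_2(B) = S_2(B)$ is contractible. If $B$ is full-dimensional and lies in the interior of $\lbox$, then $\partial S_2(B)$ is the $2$-skeleton of $\partial B \cong S^3$, hence simply connected. If $B$ is full-dimensional and meets $\partial\lbox$, then by hypothesis (a) — strict inequality of side lengths — $B$ spans no coordinate direction, so for some direction $i$ one of the two facets of $B$ in direction $i$ lies in $\partial\lbox$ while the opposite facet $F$ does not; pushing in the $i$-th coordinate toward $F$ deformation retracts $\partial S_2(B)$ (which equals the $2$-skeleton of the union of facets of $B$ not in $\partial\lbox$, up to a boundary collar lying in $\partial\lbox$ that does not change $\pi_1$) onto the $2$-skeleton of $F$, a $3$-box, hence $\partial S_2(B)$ is simply connected. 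Under hypothesis (b) the conclusion is immediate: the only facet of $B$ not in $\partial\lbox$ is the hyperplane face $\{x_i = k\}\cap\lbox$, a $3$-box, and $\partial S_2(B)$ is its $2$-skeleton up to such a collar.

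For $S_2^c(B)$ no topological model of $S_2^c(B)$ itself is needed. Applying van Kampen to suitable open thickenings of the subcomplexes $S_2(B)$ and $S_2^c(B)$ of $S_2(\lbox)$, together with the decomposition recorded above, one gets
\[ \{1\} = \pi_1(S_2(\lbox)) = \pi_1(S_2(B)) *_{\pi_1(\partial S_2(B))} \pi_1(S_2^c(B)), \]
provided $S_2(B)$, $S_2^c(B)$, $\partial S_2(B)$ are all path-connected. The first and third are so by the previous paragraph. For $S_2^c(B)$: under (a), $B$ is a proper sub-box of $\lbox$ spanning no coordinate direction, so the vertices of $\lbox$ outside $B$ form a connected subgraph and every vertex of $S_2^c(B)$ is joined within $S_2^c(B)$ to a fixed vertex of $\partial\lbox$; under (b), $S_2^c(B)$ is just the $2$-skeleton of the complementary half-box $\{x_i \geq k\}\cap\lbox$, visibly connected. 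Since $\pi_1(S_2(B)) = \pi_1(\partial S_2(B)) = \{1\}$, the displayed identity forces $\pi_1(S_2^c(B)) = \{1\}$.

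The main obstacle is the analysis of $\partial S_2(B)$ in the boundary-touching case: verifying that the stated deformation retraction is well defined (each intermediate stage stays in the complex) and that discarding the boundary collar is a homotopy equivalence. This is elementary but calls for some care with the cube-complex bookkeeping, and it is precisely where the strict inequalities of hypothesis (a) are used — they guarantee that no facet of $B$ and its opposite both lie in $\partial\lbox$, which is exactly what makes the retraction onto an opposite facet possible. A lesser point is checking the path-connectedness hypotheses needed for van Kampen, which is why $S_2^c(B)$ is treated by hand rather than through its own deformation retraction.
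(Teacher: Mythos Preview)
Your approach is genuinely different from the paper's. The paper defers case (a) entirely to \cite[Lemma 4.1.20]{Cao2020}, and for case (b) simply observes that $S_2^c(B)$ is itself the $2$-skeleton of the complementary half-box $\{x_i\ge k\}\cap\Lambda$ (hence a rectangle, hence simply connected by the same cited lemma), and that $\partial S_2(B)$ is the $2$-skeleton of the $3$-box $\{x_i=k\}\cap\Lambda$; attaching its $3$-cells leaves $\pi_1$ unchanged and yields a contractible box. No van Kampen is used.

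Your van Kampen reduction for $S_2^c(B)$ is an attractive way to make the argument self-contained, but it rests on the claimed identity $S_2(B)\cap S_2^c(B)=\partial S_2(B)$, and this fails once $B$ meets $\partial\Lambda$ along more than one facet. Take $\Lambda=[0,3]^4$, $B=[0,1]^4$ (so hypothesis (a) holds), and the edge $e=\{(t,1,0,0):t\in[0,1]\}$. Then $e$ lies in a plaquette of $B$ and also in the plaquette $[0,1]\times[1,2]\times\{0\}\times\{0\}\not\subset B$, so $e\in S_2(B)\cap S_2^c(B)$. But every plaquette of $\partial B$ containing $e$ has at least one coordinate fixed at $0$ and hence lies in $\partial\Lambda$, so $e\notin\partial S_2(B)$. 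Thus the amalgamated product in your display is taken over the wrong subgroup: you would need $\pi_1$ of the actual intersection, not of $\partial S_2(B)$. The true intersection is $\partial S_2(B)$ together with a $1$-dimensional piece along $\partial B\cap\partial\Lambda$, and showing \emph{that} is simply connected (and path-connected) is essentially as much work as the original claim. The paper's route --- citing the external lemma for (a) and identifying $S_2^c(B)$ directly as a rectangle for (b) --- sidesteps this bookkeeping entirely, at the cost of being less self-contained.
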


Using the previously introduced notions, we can define a partition of any plaquette set $P \sse \plaquettes$, as follows.

\begin{defn}[Knot decomposition]\label{def:knot-decomposition}
For every plaquette set $P \sse \plaquettes$, we fix a maximal partition
\[ P = K_1 \cup \cdots \cup K_m,\]
such that for all $1 \leq i \leq m - 1$, we have that $K_i$ is well separated from $K_{i+1} \cup \cdots \cup K_m$ by a good rectangle $B_i$ in $\lbox$. Here, ``maximal" means that for all $1 \leq i \leq m$, there does not exist a further partition $K_i = K \cup K'$ such that $K$ is well separated from $K'$ by a good rectangle $B$ in $\lbox$. Such a maximal partition may not be unique; we just fix one. We refer to this partition as the ``knot decomposition" of $P$. Let $\mc{K}$\label{notation:script-K} be the collection of all $K \sse \plaquettes$\label{notation:K} which appear in the knot decomposition of some $P \sse \plaquettes$. We refer to the elements $K \in \mc{K}$ as ``knots".
\end{defn}

\begin{remark}
The definition of knot decomposition given here is slightly different than the one given at the end of \cite[Section 4.1]{Cao2020}. The slightly more complicated definition of \cite{Cao2020} was needed for precisely computing Wilson loop expectations, which was the main focus of \cite{Cao2020}. In the present paper, we are just trying to obtain upper bounds (on correlations), and so we can make do with simpler definition that is given.
\end{remark}

The following lemma bounds the number of knots. This is essentially \cite[Lemma 4.3.4]{Cao2020}. However, because the definition of knot decomposition is slightly different than the one given in \cite{Cao2020}, we give a proof in Appendix \ref{appendix:knot-upper-bound}.

\begin{lemma}[Cf. Lemma 4.3.4 of \cite{Cao2020}]\label{lemma:knot-number-bound}
Let $p \in \plaquettes$. For any $m \geq 1$, the number of knots $K \in \mc{K}$ of size $m$ which contain $p$ is at most $(10^{24})^m$.
\end{lemma}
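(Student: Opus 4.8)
**Proof proposal for Lemma \ref{lemma:knot-number-bound} (bounding the number of knots of size $m$ containing a fixed plaquette $p$).**

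The plan is to count knots by how they arise. By Definition \ref{def:knot-decomposition}, any knot $K$ of size $m$ is a block $K_i$ in some maximal knot decomposition; the defining feature of a knot is that it cannot be split into two parts that are well separated by a good rectangle. The natural first step is to forget the ambient set $P$ and instead give an intrinsic characterization: a set $K \sse \plaquettes$ is a knot precisely when $K$ is ``well-separation-indecomposable'', i.e., there is no partition $K = K' \cup K''$ with $K'$ well separated from $K''$ by a good rectangle in $\lbox$. (One should check that every block in a maximal decomposition is exactly such an indecomposable set, and conversely each such set is a knot -- take $P = K$ itself.) This reduces the problem to bounding the number of such indecomposable $K$ with $|K| = m$ and $p \in K$.

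Next I would show that any such $K$ is \emph{connected} in an appropriate sense: I claim that if $K$ cannot be split by a good rectangle, then (by Lemma \ref{lemma:good-rectangle-examples}, which provides a rich supply of good rectangles, including all axis-parallel half-space slabs and all sufficiently small rectangles) any two plaquettes of $K$ can be joined by a chain of plaquettes in $K$ that are ``close'' in the sense that no thin good rectangle separates consecutive ones. Concretely, if $K$ were disconnected with respect to the adjacency relation ``two plaquettes share a vertex'' (or a slightly coarser relation -- the precise relation is chosen so that non-adjacent clusters can always be separated by one of the good rectangles from Lemma \ref{lemma:good-rectangle-examples}), then $K$ would decompose, contradicting indecomposability. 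Thus $K$ is a connected-in-this-sense subset of $\plaquettes$ of size $m$ containing $p$.

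The final step is a standard lattice-animal / greedy-algorithm count: the number of connected subsets of size $m$ of a bounded-degree graph containing a fixed vertex grows at most exponentially in $m$, with base the (appropriate) degree. Here the relevant ``degree'' -- the number of plaquettes that are close to a given plaquette in the relevant relation -- is an absolute constant depending only on the four-dimensional lattice geometry, and the bookkeeping (each connected set of size $m$ is obtained by a spanning-tree exploration, giving a bound like $(\text{degree})^{O(m)}$) yields a bound of the form $C^m$; one then checks that the constant $C$ fits under $10^{24}$. This is why the stated constant is so large and crude: it is a generous geometric overcount, exactly as in \cite[Lemma 4.3.4]{Cao2020}.

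The main obstacle, I expect, is the middle step: carefully pinning down which adjacency/closeness relation on plaquettes is the right one, so that (i) indecomposability under good-rectangle separation genuinely forces connectedness in that relation -- this is where one must exploit that Lemma \ref{lemma:good-rectangle-examples} gives enough good rectangles, in particular the thin slabs, to separate any two plaquette-clusters that are not directly linked -- and (ii) the relation still has bounded degree so the animal count goes through. Once the relation is fixed, the separation argument and the counting are routine; getting the relation right (and matching it to the constant $10^{24}$ inherited from \cite{Cao2020}) is the delicate part, and is presumably why the authors defer it to Appendix \ref{appendix:knot-upper-bound}.
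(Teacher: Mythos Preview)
Your proposal takes a genuinely different route from the paper. You aim for a single-scale lattice-animal count: show that every knot is connected with respect to some fixed bounded-degree adjacency relation on plaquettes (share a vertex, or similar), then bound the number of connected sets of size $m$ containing $p$ by a standard exploration argument.

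The paper instead uses a multi-scale hierarchical argument borrowed from \cite{Cao2020}. It introduces the vortex decomposition of $K$ (connected components under the relation ``share a $3$-cell''), then builds a hierarchy of graphs $G^s(K)$: at scale $0$ the vertices are the vortices of $K$, with edges given by $J(P,P')=1$ (one piece meets the minimal bounding \emph{cube} of the other); at scale $s+1$ the vertices are the connected components of $G^s(K)$, again with $J$-adjacency. The key lemma (Lemma \ref{lemma:knot-graph-properties}) is that if $K$ is a knot and $G^s(K)$ has more than one vertex, then no vertex is isolated --- because an isolated piece $P^s_i$ would be well separated from $K\setminus P^s_i$ by its own bounding cube $B(P^s_i)$, contradicting indecomposability. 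Hence every scale-$s$ piece has size at least $2^s$, so $K\in\mc{D}$ and $K\in\mc{A}(m,\lfloor\log_2 m\rfloor-1,p)$; the bound $(10^{24})^m$ is then quoted directly from \cite[Corollary 4.4.8]{Cao2020}.

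The essential difference, and the gap in your sketch, is that the adjacency indecomposability actually forces is \emph{scale-dependent}. What being a knot rules out is separation by the minimal bounding cube of a piece, and the side length of that cube grows with the diameter of the piece. So a piece of size $n$ can be $J$-inseparable from another piece at distance of order $n$, not of order $1$; a knot can consist of several vortices that are not linked by any fixed finite-range plaquette relation. This is exactly why the paper coarsens scale by scale rather than fixing a single plaquette-level relation. Your ``middle step'' --- the existence of one bounded-degree relation in which all knots are connected --- is precisely what the multi-scale machinery is designed to circumvent, and it cannot be read off from Lemma \ref{lemma:good-rectangle-examples} alone. Absent a proof of that step your argument does not close; the paper's route avoids the question entirely.
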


The following lemma is a slight generalization of \cite[Lemma 4.2.21]{Cao2020}. The point of the definitions of ``well separated" and ``good rectangle" (Definitions \ref{def:well-separated} and \ref{def:good-rectangle}) is to have this lemma.

\begin{lemma}\label{lemma:vortices-well-separated-gauge-fix-decomp}
Let $P_1, P_2 \sse \plaquettes$. Suppose $P_1, P_2$ are well separated by a good rectangle $B$ in $\lbox$. Let $T$ be a spanning tree of $\oneskel$ which contains spanning trees of $S_2(B), S_2^c(B)$, and $\partial S_2(B)$. There is a bijection between the set of $\sigma \in GF(T)$ such that $\supp(\sigma) = P_1 \cup P_2$, and the set of tuples $(\sigma^1, \sigma^2)$ such that $\sigma^i \in GF(T)$, $\supp(\sigma^i) = P_i$, $i = 1, 2$. Moreover, if $\sigma$ is mapped to $(\sigma^1, \sigma^2)$, then $\sigma = \sigma^1 \sigma^2$, $\sigma^1 = \groupid$ on $S_2^c(B)$, $\sigma^2 = \groupid$ on $S_2(B)$. Consequently, for all $p \in S_2(B)$, $\sigma^1_p = \sigma_p$, and for all $p \in S_2^c(B)$, $\sigma^2_p = \sigma_p$.
\end{lemma}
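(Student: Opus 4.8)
The plan is to prove Lemma~\ref{lemma:vortices-well-separated-gauge-fix-decomp} by constructing the claimed bijection explicitly and then checking all the stated properties. The forward map is the ``restriction/splitting'' map and the backward map is multiplication. Concretely, given $\sigma \in GF(T)$ with $\supp(\sigma) = P_1 \cup P_2$, I would first use the hypothesis that $B$ is a good rectangle together with $P_1 \sse S_2(B)$, $P_2 \sse S_2^c(B)$, and no plaquettes of $P_1 \cup P_2$ in $\ptl S_2(B)$, to deduce (via the simple connectivity of $S_2(B)$, $S_2^c(B)$, $\ptl S_2(B)$, exactly as in the proof of \cite[Lemma 4.2.21]{Cao2020}) that $\sigma$ can be written uniquely as $\sigma^1 \sigma^2$ with $\sigma^i \in GF(T)$, $\sigma^1 = \groupid$ on all edges of $S_2^c(B)$, and $\sigma^2 = \groupid$ on all edges of $S_2(B)$. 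This is where the topology does its work: one uses the spanning tree $T$ (which restricts to spanning trees of the three subcomplexes) to transport the ``gauge-fixed'' structure, and simple connectivity of $\ptl S_2(B)$ is what lets the two pieces be disentangled along the shared boundary without creating extra holonomy. I would then set the forward map to be $\sigma \mapsto (\sigma^1, \sigma^2)$ and the backward map to be $(\sigma^1, \sigma^2) \mapsto \sigma^1 \sigma^2$, and check these are mutually inverse (the backward map lands in $GF(T)$ since both factors are $\groupid$ on $T$, and the forward decomposition is unique so composing the two maps gives the identity on both sides).

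Next I would verify the support statements. For any plaquette $p$, since the edges of $p$ either all lie in $S_2(B)$ (if $p \in B$), or all lie in $S_2^c(B)$ (if $p \notin B$), one of $\sigma^1_p$ or $\sigma^2_p$ equals $\groupid$; hence $\sigma_p = \sigma^1_p \sigma^2_p$ reduces to whichever factor is nontrivial. More precisely, for $p \in S_2(B)$ we get $\sigma^2_p = \groupid$ so $\sigma_p = \sigma^1_p$, and for $p \in S_2^c(B)$ we get $\sigma^1_p = \groupid$ so $\sigma_p = \sigma^2_p$; this is the ``Consequently'' clause. From this, $\supp(\sigma^1) \sse S_2(B)$ and $\supp(\sigma^2) \sse S_2^c(B)$, and $\supp(\sigma) = \supp(\sigma^1) \cup \supp(\sigma^2)$. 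Combined with $\supp(\sigma) = P_1 \cup P_2$, $P_1 \sse S_2(B)$, $P_2 \sse S_2^c(B)$, and the disjointness of $S_2(B) \setminus \ptl S_2(B)$ from $S_2^c(B) \setminus \ptl S_2(B)$ away from the excluded boundary plaquettes, one concludes $\supp(\sigma^1) = P_1$ and $\supp(\sigma^2) = P_2$, so the forward map does land in the claimed target set. Conversely, given $(\sigma^1, \sigma^2)$ with $\sigma^i \in GF(T)$, $\supp(\sigma^i) = P_i$, $\sigma^1 = \groupid$ on $S_2^c(B)$, $\sigma^2 = \groupid$ on $S_2(B)$, the same plaquette-by-plaquette reasoning shows $\supp(\sigma^1\sigma^2) = P_1 \cup P_2$, so the backward map also respects the target sets. (One should double-check whether the backward map should be stated with the normalizing conditions $\sigma^1 = \groupid$ on $S_2^c(B)$ etc.\ built into the domain; the cleanest formulation, matching \cite{Cao2020}, is that the ``tuples'' in the statement implicitly carry these normalizations, which is how I would read and present it.)

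The main obstacle — really the only nontrivial point — is the existence and uniqueness of the decomposition $\sigma = \sigma^1 \sigma^2$ with the stated vanishing properties, i.e.\ the part that genuinely uses good-ness of $B$. The cleanest route is to not reprove this from scratch but to invoke \cite[Lemma 4.1.9 and Lemma 4.2.21]{Cao2020}: the present lemma is stated to be ``a slight generalization'' of \cite[Lemma 4.2.21]{Cao2020}, so I would isolate precisely what is new here (the generalization is presumably that we allow $B$ to be any \emph{good} rectangle rather than one of a restricted special form, and that we track the support sets explicitly), and argue that the proof in \cite{Cao2020} goes through verbatim once one knows $S_2(B)$, $S_2^c(B)$, $\ptl S_2(B)$ are simply connected — which is exactly the definition of good. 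The remaining steps (that the two maps are inverse bijections, the $\sigma = \sigma^1\sigma^2$ and vanishing statements, and the per-plaquette consequence) are then short and essentially formal. I would also remark, following the paper's stated preference to minimize topology, that one can alternatively phrase the whole decomposition at the level of homomorphisms in $\homspace$ using Lemma~\ref{lemma:gauge-fixed-bijection-homomorphism} to pass between $GF(T)$ and $\homspace$, but keeping it on the $GF(T)$ side as above is the most direct.
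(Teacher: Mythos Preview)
Your proposal is correct and takes essentially the same approach as the paper: the paper's proof is a one-line citation of \cite[Lemma 4.1.9]{Cao2020} (pointing also to the proof of \cite[Lemma 4.2.21]{Cao2020}), and your plan ultimately reduces to the same citation after spelling out the forward/backward maps and the plaquette-by-plaquette support check. Your flagging of the subtlety about whether the normalizations $\sigma^1 = \groupid$ on $S_2^c(B)$, $\sigma^2 = \groupid$ on $S_2(B)$ are automatic from $\supp(\sigma^i) = P_i$ is well-placed --- this is precisely the content supplied by \cite[Lemma 4.1.9]{Cao2020}.
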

\begin{proof}
This follows by \cite[Lemma 4.1.9]{Cao2020} (see the proof of \cite[Lemma 4.1.21]{Cao2020}).
\end{proof}

Using Lemma \ref{lemma:vortices-well-separated-gauge-fix-decomp}, we can prove the next lemma, which will be important in constructing an event $E(B_1,  B_2)$ and an associated swapping map $T$.

\begin{lemma}\label{lemma:bijection-existence}
Let $P_1, P_2 \sse \plaquettes$ be plaquette sets, and suppose that $P_1$ is well separated from $P_2$ by a good rectangle $B$ in $\lbox$. Then there is a bijection between 
\[ \{\psi \in \homspace : \supp(\psi) \sse P_1 \cup P_2\} \]
and 
\[ \{(\psi_1, \psi_2) \in \Omega^2 : \supp(\psi_i) \sse P_i \text{ for $i = 1, 2$}\}. \]
Moreover, suppose that $\psi$ is mapped to $(\psi_1, \psi_2)$ by the bijection. Then for any $p \in \plaquettes$, we have that $\varphi_\beta(\psi(\xi_p)) = \varphi_\beta(\psi_1(\xi_p)) \varphi_\beta(\psi_2(\xi_p))$. We also have that
\[ \supp(\psi) = \supp(\psi_1) \cup \supp(\psi_2). \]
Finally, let $B_0$ be a rectangle in $\lbox$, and suppose that $B_0 \sse P_i$ for some $i = 1, 2$. Then for any loop $\wloop$ in $B_0$, and any path $\ell$ from $x_0$ to the initial vertex of $\wloop$, we have that $\psi(\ell^{-1} \wloop \ell)$ is conjugate to $\psi_i(\ell^{-1} \wloop \ell)$.
\end{lemma}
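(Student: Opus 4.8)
\emph{Proof sketch.} The plan is to bootstrap the bijection of Lemma~\ref{lemma:vortices-well-separated-gauge-fix-decomp}, which is stated for an \emph{exact} support $P_1\cup P_2$, up to the inclusion $\supp(\psi)\sse P_1\cup P_2$ required here, and then to transport everything from edge configurations to homomorphisms via Lemma~\ref{lemma:gauge-fixed-bijection-homomorphism}. First I would record that $P_1\cap P_2=\emptyset$: well separatedness gives $P_1\sse S_2(B)$, $P_2\sse S_2^c(B)$, and $P_1\cap\partial S_2(B)=\emptyset=P_2\cap\partial S_2(B)$, whereas $S_2(B)$ and $S_2^c(B)$ share, as plaquette sets, exactly $\partial S_2(B)$. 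Hence every $R\sse P_1\cup P_2$ splits uniquely as $R=Q_1\cup Q_2$ with $Q_i:=R\cap P_i\sse P_i$, and for each such pair the sets $Q_1,Q_2$ are again well separated by the \emph{same} good rectangle $B$. This gives the disjoint-union decomposition
\[ \{\psi\in\homspace:\supp(\psi)\sse P_1\cup P_2\}=\bigsqcup_{Q_1\sse P_1,\,Q_2\sse P_2}\{\psi\in\homspace:\supp(\psi)=Q_1\cup Q_2\}, \]
and similarly the target set decomposes, over the same index set, into the pieces $\{(\psi_1,\psi_2):\supp(\psi_i)=Q_i,\ i=1,2\}$.

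Next I would construct the bijection one fiber at a time. Fix, once and for all, a spanning tree $T$ of $\oneskel$ containing spanning trees of $S_2(B)$, $S_2^c(B)$, $\partial S_2(B)$ (it depends only on $B$, and exists since $B$ is good), so that Lemma~\ref{lemma:vortices-well-separated-gauge-fix-decomp} applies to every pair $(Q_1,Q_2)$ with this same $T$. For fixed $(Q_1,Q_2)$, compose three bijections: the inverse of $\psi_{x_0}\colon GF(T)\ra\homspace$, which is a support-preserving bijection by Lemma~\ref{lemma:gauge-fixed-bijection-homomorphism}; then the bijection $\sigma\mapsto(\sigma^1,\sigma^2)$ of Lemma~\ref{lemma:vortices-well-separated-gauge-fix-decomp} applied to $(Q_1,Q_2)$; then $\psi_{x_0}\times\psi_{x_0}$. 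The composite is a bijection from $\{\psi:\supp(\psi)=Q_1\cup Q_2\}$ onto $\{(\psi_1,\psi_2):\supp(\psi_i)=Q_i\}$, and piecing these together over all $(Q_1,Q_2)$ yields the asserted bijection $\psi\mapsto(\psi_1,\psi_2)$.

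It then remains to verify the three ``moreover'' claims for this map. Write $\sigma:=\psi_{x_0}^{-1}(\psi)$ and let $(\sigma^1,\sigma^2)$ be its image under Lemma~\ref{lemma:vortices-well-separated-gauge-fix-decomp}, so $\psi_i=\psi_{x_0}(\sigma^i)$, and recall that $\sigma=\sigma^1\sigma^2$, $\sigma^1=\groupid$ on $S_2^c(B)$, and $\sigma^2=\groupid$ on $S_2(B)$. The support identity is immediate, since $\supp(\psi)=\supp(\sigma)=Q_1\cup Q_2$ and $\supp(\psi_i)=\supp(\sigma^i)=Q_i$. For the $\varphi_\beta$-factorization, $\varphi_\beta$ is conjugacy invariant (being built from the class function $\chi$), and $\psi(\xi_p)$ is conjugate to $\sigma_p$ and $\psi_i(\xi_p)$ to $\sigma^i_p$ by Lemma~\ref{lemma:gauge-fixed-bijection-homomorphism}; so it suffices to show $\varphi_\beta(\sigma_p)=\varphi_\beta(\sigma^1_p)\,\varphi_\beta(\sigma^2_p)$ for each $p\in\plaquettes$. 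If $p$ is a plaquette of $B$, its edges lie in $S_2(B)$, whence $\sigma^2_p=\groupid$ and $\sigma_p=\sigma^1_p$; if $p$ is not a plaquette of $B$, then $p\in S_2^c(B)$, whence $\sigma^1_p=\groupid$ and $\sigma_p=\sigma^2_p$; in both cases the identity holds (using $\varphi_\beta(\groupid)=1$), and every plaquette is of one type or the other.

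For the last claim, let $B_0$ be a rectangle all of whose plaquettes lie in $P_i$, let $\wloop$ be a loop in $B_0$, and $\ell$ a path from $x_0$ to the initial vertex of $\wloop$. If $B_0$ has at most one direction of positive length, its $1$-skeleton is a tree, so $\wloop$ --- hence $\ell^{-1}\wloop\ell$ --- is null-homotopic in $\oneskel$ and $\psi(\ell^{-1}\wloop\ell)=\groupid=\psi_i(\ell^{-1}\wloop\ell)$; otherwise every edge of $B_0$ is an edge of some plaquette of $B_0$, hence lies in $S_2(B)$ when $i=1$ (as $P_1\sse S_2(B)$) and in $S_2^c(B)$ when $i=2$. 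In the first case $\sigma^2=\groupid$ on all edges of $\wloop$, so $\sigma_\wloop=\sigma^1_\wloop$; in the second $\sigma^1=\groupid$ on those edges, so $\sigma_\wloop=\sigma^2_\wloop$; either way $\sigma_\wloop=\sigma^i_\wloop$, and Lemma~\ref{lemma:gauge-fixed-bijection-homomorphism} gives $\psi(\ell^{-1}\wloop\ell)$ conjugate to $\sigma_\wloop=\sigma^i_\wloop$, which is conjugate to $\psi_i(\ell^{-1}\wloop\ell)$; transitivity of conjugacy concludes. The main obstacle here is organizational rather than conceptual --- bridging the exact-support statement of Lemma~\ref{lemma:vortices-well-separated-gauge-fix-decomp} to the inclusion statement (via the fiber decomposition) and keeping the roles of $S_2(B)$ and $S_2^c(B)$ straight; the substantive topology is already packaged in Lemma~\ref{lemma:vortices-well-separated-gauge-fix-decomp}.
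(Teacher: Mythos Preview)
Your proof is correct and follows essentially the same route as the paper: both reduce to gauge-fixed edge configurations via Lemma~\ref{lemma:gauge-fixed-bijection-homomorphism}, invoke Lemma~\ref{lemma:vortices-well-separated-gauge-fix-decomp} for the splitting $\sigma=\sigma^1\sigma^2$ with $\sigma^1=\groupid$ on $S_2^c(B)$ and $\sigma^2=\groupid$ on $S_2(B)$, and then transport back to $\homspace$. The paper writes the chain of bijections $E\leftrightarrow F\leftrightarrow F_1\times F_2\leftrightarrow E_1\times E_2$ directly at the level of $\sse$-supports, whereas you make explicit the fiber decomposition over exact supports $(Q_1,Q_2)$ --- this is a genuine clarification, since Lemma~\ref{lemma:vortices-well-separated-gauge-fix-decomp} as stated handles only exact supports, and your observation that $(Q_1,Q_2)$ remain well separated by the \emph{same} $B$ (so the same tree $T$ works throughout) is what justifies gluing the fibers coherently.
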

\begin{proof}
Let
\[ E := \{\psi \in \homspace : \supp(\psi) \sse P_1 \cup P_2\},\]
and for $i = 1, 2$, let
\[ E_i := \{\psi \in \homspace :  \supp(\psi) \sse P_i\}. \]
We want to construct a bijection between $E$ and $E_1 \times E_2$. Fix $T_B$, a spanning tree of $\oneskel$ which contains spanning trees of $S_2(B), S_2^c(B)$ and $\ptl S_2(B)$ (to construct such a $T_B$, first take a spanning tree $\tilde{T}$ of $\ptl S_2(B)$, and then extend it to spanning trees $T_1, T_2$ of $S_2(B)$, $S_2^c(B)$ respectively, and then define $T_B := T_1 \cup T_2$). By Lemma \ref{lemma:gauge-fixed-bijection-homomorphism}, the set $E$ is in bijection with the set $F := \{\sigma \in GF(T_B) : \supp(\sigma) \sse  P_1 \cup P_2\}$. By Lemma \ref{lemma:vortices-well-separated-gauge-fix-decomp} and the assumption that $P_1$ is well separated from $P_2$ by the good rectangle $B$, the set $F$ is in bijection with the set $F_1 \times F_2$, where
\[ F_i := \{\sigma \in GF(T_B) : \supp(\sigma) \sse P_i\}, ~~ i = 1, 2,   \]
and moreover if $\sigma$ is mapped to $(\sigma^1, \sigma^2)$ under this bijection, then $\sigma = \sigma^1 \sigma^2$, $\sigma^1 = \groupid$ on $S_2^c(B)$, and $\sigma^2 = \groupid$ on $S_2(B)$. Next, by Lemma \ref{lemma:gauge-fixed-bijection-homomorphism}, for $i = 1, 2$, $F_i$ is in bijection with $E_i$.  We can thus obtain a bijection between $E$ and $E_1 \times E_2$, by composing all the previously mentioned bijections: 
\[ E \leftrightarrow F \leftrightarrow F_1 \times F_2 \leftrightarrow E_1 \times E_2. \]
The various claimed properties of this bijection follow because (a) by Lemma \ref{lemma:gauge-fixed-bijection-homomorphism}, the bijections between $E  \leftrightarrow F$ and $E_i \leftrightarrow F_i$ preserve conjugacy, and (b) in the bijection between $F \leftrightarrow F_1 \times F_2$, we have that if $\sigma$ is mapped to $(\sigma^1, \sigma^2)$, then $\sigma_p = \sigma^i_p$ for $p \in P_i$ and $\sigma_p = \groupid = \sigma^1_p \sigma^2_p$ for $p \notin (P_1 \cup P_2)$, and if $B_0 \sse P_i$ for some $i = 1, 2$, then for any loop $\wloop$ in $B_0$, we have that $\sigma_\wloop = \sigma^i_\wloop$.
\end{proof}

Lemma \ref{lemma:bijection-existence} immediately implies the following corollary, whose proof is omitted.

\begin{cor}\label{cor:bijection-existence}
Let $P_1, \ldots, P_k \sse \plaquettes$ be plaquette sets, such that for all $1 \leq i \leq k-1$, $P_i$ is well separated from $P_{i+1} \cup \cdots \cup P_k$ by a good rectangle $B_i$ in $\lbox$. Then there is a bijection between 
\[ \{\psi \in \homspace : \supp(\psi) \sse P_1 \cup \cdots \cup P_k\} \] 
and
\[ \{(\psi_i, i \in [k]) \in \Omega^k :  \supp(\psi_i) \sse P_i \text{ for all $i \in [k]$}\}. \]
Moreover, suppose that $\psi$ is mapped to $(\psi_i, i \in [k])$ by the bijection. Then for any $p \in \plaquettes$, we have that $\varphi_\beta(\psi(\xi_p)) = \prod_{i \in [k]} \varphi_\beta(\psi_i(\xi_p))$. We also have that
\[ \supp(\psi)  = \bigcup_{i \in [k]} \supp(\psi_i). \]
Finally, let $B_0$ be a rectangle in $\lbox$, and suppose that $B_0 \sse P_i$ for some $i \in [k]$. Then for any loop $\wloop$ in $B_0$, and any path $\ell$ from $x_0$ to the initial vertex of $\wloop$, we have that $\psi(\ell^{-1} \wloop \ell)$ is conjugate to $\psi_i(\ell^{-1} \wloop \ell)$.
\end{cor}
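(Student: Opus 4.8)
The plan is to prove Corollary \ref{cor:bijection-existence} by induction on $k$, using Lemma \ref{lemma:bijection-existence} as the engine for the inductive step. The base case $k=1$ is trivial: take the identity bijection, and all three asserted properties hold vacuously.

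For the inductive step, I would assume the corollary for $k-1$ and, given $P_1,\dots,P_k$ as in the hypothesis, set $Q := P_2 \cup \cdots \cup P_k$. The $i=1$ instance of the hypothesis says exactly that $P_1$ is well separated from $Q$ by the good rectangle $B_1$, so Lemma \ref{lemma:bijection-existence} applied to the pair $(P_1, Q)$ produces a bijection $\psi \leftrightarrow (\psi_1, \psi')$ between $\{\psi \in \homspace : \supp(\psi) \sse P_1 \cup Q\}$ and $\{(\psi_1,\psi') : \supp(\psi_1)\sse P_1,\ \supp(\psi')\sse Q\}$, with $\varphi_\beta(\psi(\xi_p)) = \varphi_\beta(\psi_1(\xi_p))\varphi_\beta(\psi'(\xi_p))$ for every $p$, with $\supp(\psi) = \supp(\psi_1)\cup\supp(\psi')$, and with the conjugacy property. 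The remaining instances ($2\le i\le k-1$) of the hypothesis are precisely the hypothesis of the corollary for the sets $P_2,\dots,P_k$ with $k-1$ in place of $k$, so the inductive hypothesis applies to $\psi'$ and gives a bijection $\psi' \leftrightarrow (\psi_2,\dots,\psi_k)$ with the analogous properties. Composing the two bijections yields $\psi \leftrightarrow (\psi_1,\dots,\psi_k)$, the map claimed in the corollary.

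Then I would check the three properties for the composite. The product identity $\varphi_\beta(\psi(\xi_p)) = \prod_{i\in[k]}\varphi_\beta(\psi_i(\xi_p))$ and the support identity $\supp(\psi) = \bigcup_{i\in[k]}\supp(\psi_i)$ both follow by substituting the inductive identities for $\psi'$ into the corresponding two-set identities from Lemma \ref{lemma:bijection-existence}. For the conjugacy claim, given $B_0 \sse P_i$ for some $i\in[k]$, a loop $\wloop$ in $B_0$, and a path $\ell$ from $x_0$ to the initial vertex of $\wloop$: if $i=1$, Lemma \ref{lemma:bijection-existence} directly gives that $\psi(\ell^{-1}\wloop\ell)$ is conjugate to $\psi_1(\ell^{-1}\wloop\ell)$; if $i\ge 2$, then $B_0 \sse P_i \sse Q$, so Lemma \ref{lemma:bijection-existence} gives $\psi(\ell^{-1}\wloop\ell)$ conjugate to $\psi'(\ell^{-1}\wloop\ell)$, while the inductive hypothesis (with $B_0 \sse P_i$) gives $\psi'(\ell^{-1}\wloop\ell)$ conjugate to $\psi_i(\ell^{-1}\wloop\ell)$, and transitivity of conjugacy closes the case.

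I expect no genuine obstacle here; the statement is essentially an iteration of Lemma \ref{lemma:bijection-existence}. The only place demanding a little care is the conjugacy statement, where one must split according to whether $B_0$ sits inside $P_1$ or inside $Q = P_2\cup\cdots\cup P_k$, and then appeal to transitivity of the conjugacy relation to chain the two conjugacies together.
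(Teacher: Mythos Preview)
Your proposal is correct and is exactly the intended argument: the paper omits the proof, stating only that the corollary follows immediately from Lemma \ref{lemma:bijection-existence}, and your induction on $k$ with $Q = P_2 \cup \cdots \cup P_k$ is the natural way to spell this out. The case split on whether $B_0 \sse P_1$ or $B_0 \sse Q$ for the conjugacy claim, together with transitivity, is precisely the right way to handle that part.
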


\begin{defn}\label{def:fix-bijection}
Given a plaquette set $P \sse \plaquettes$, let $P = K_1 \cup \cdots \cup K_m$ be the knot decomposition of $P$. Let $\Theta(P)$ be a bijection corresponding to $K_1, \ldots, K_m$ as in Corollary \ref{cor:bijection-existence}. Such a bijection may not be unique; we just fix one.
\end{defn}

\section{Construction of the swapping map}\label{section:construction-of-swapping-map}

In this section, we construct an event $E(B_1,  B_2)$ and an associated swapping  map $T$, by using the notions introduced in Section \ref{section:knot-decomposition}. Throughout  this section, fix rectangles $B_1, B_2 \sse \lbox$.

\begin{defn}\label{def:E-gamma-1-gamma-2}
Let $E(B_1,  B_2) \sse \homspace^2$ be the set of pairs $(\psi_1, \psi_2)$ such that the following  holds.
Take the knot decomposition 
\[ \supp(\psi_1) \cup \supp(\psi_2) \cup B_1 \cup B_2 = K_1 \cup \cdots \cup K_m. \]
For $i = 1, 2$, let $j_i \in [m]$ be such that $B_i \sse K_{j_i}$ (the existence of such $j_i$ follows because $B_i$ is a rectangle, and thus cannot be divided into separate knots, by the definition of knot decomposition). Then the condition is that $j_1 \neq j_2$ (i.e. $B_1, B_2$ are contained in different knots).
\end{defn}

We proceed to define a map $T : E(B_1,  B_2) \ra E(B_1,  B_2)$ with the properties required in Definition \ref{def:swapping-map}. 

\begin{defn}\label{def:T}
Let $(\psi_1, \psi_2) \in E(B_1,  B_2)$. As in the definition of $E(B_1,  B_2)$, take the knot decomposition
\[ P := \supp(\psi_1) \cup \supp(\psi_2) \cup B_1 \cup B_2 = K_1 \cup \cdots \cup K_m,  \]
and also let $j_2$ be as defined in that definition. Recalling Definition \ref{def:fix-bijection}, let $\Theta = \Theta(P)$. Define the map $\swap$ from the set 
\[ \{(\psi_i, i \in [m]) \in \homspace^m :  \supp(\psi_i) \sse K_i \text{ for all $i \in [m]$}\}^2 \]
to itself as follows. Map
\[ ((\psi_{1, i}, i \in [m]), (\psi_{2, i}, i \in [m])) \mapsto  ((\tilde{\psi}_{1, i}, i \in [m]), (\tilde{\psi}_{2, i}, i \in [m])), \]
where
\[ \tilde{\psi}_{1, i} := \begin{cases} \psi_{1, i} & i \neq j_2 \\ \psi_{2, j_2} & i = j_2 \end{cases}, \]
\[ \tilde{\psi}_{2, i} := \begin{cases} \psi_{2, i} & i \neq j_2 \\ \psi_{1, j_2} & i = j_2 \end{cases}.\]
In other words, $\swap$ simply swaps the $j_2$-th coordinate. Let $\swap_1, \swap_2$ be the coordinate functions of $\swap$, i.e. they are such that $\swap = (\swap_1, \swap_2)$.  Finally, define $T : E(B_1,  B_2) \ra E(B_1,  B_2)$ by
\[ (\psi_1, \psi_2) \mapsto \big(\Theta^{-1}(\swap_1(\Theta(\psi_1), \Theta(\psi_2))), \Theta^{-1}(\swap_2(\Theta(\psi_1), \Theta(\psi_2)))\big).  \]
This definition may not be very illuminating when written out, but just think of $T$ as swapping the $j_2$-th coordinate (as in the definition of $\swap$), once we have identified $\psi_1, \psi_2$ with the tuples $(\psi_{1, i}, i \in [m])$, $(\psi_{2, i}, i \in [m])$ (and this identification is what $\Theta$ is doing).
\end{defn}


The next sequence of lemmas combine to show that $T$ has all the desired properties.

\begin{lemma} \label{lemma:Tisinvolution}
The map $T$ is an involution, i.e. for any $(\psi_1, \psi_2) \in E(B_1,  B_2)$, we have that $T^2(\psi_1, \psi_2) = (\psi_1, \psi_2)$. As an immediate consequence, $T$ is a bijection.
\end{lemma}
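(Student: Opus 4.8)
The plan is to show $T^2 = \mathrm{id}$ on $E(B_1, B_2)$ directly, by unwinding the definition of $T$ in terms of $\Theta$ and $\swap$, and checking that the key quantities involved (the knot decomposition $P$, the bijection $\Theta = \Theta(P)$, and the index $j_2$) are all left unchanged when we pass from $(\psi_1, \psi_2)$ to $T(\psi_1, \psi_2)$. Once those three ingredients are fixed, $T$ acts on the identified tuples exactly as the coordinate swap $\swap$, and $\swap$ is visibly an involution (swapping the $j_2$-th coordinate of a pair of tuples twice returns the original pair), so $T^2 = \mathrm{id}$ follows. The conclusion that $T$ is a bijection is then immediate since any involution on a set is its own inverse.

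Concretely, I would first fix $(\psi_1, \psi_2) \in E(B_1, B_2)$, write the knot decomposition $P = \supp(\psi_1) \cup \supp(\psi_2) \cup B_1 \cup B_2 = K_1 \cup \cdots \cup K_m$ used in Definition \ref{def:T}, let $\Theta = \Theta(P)$, and let $(\tilde\psi_1, \tilde\psi_2) = T(\psi_1, \psi_2)$. The crucial observation is that $\supp(\tilde\psi_1) \cup \supp(\tilde\psi_2) = \supp(\psi_1) \cup \supp(\psi_2)$: by Corollary \ref{cor:bijection-existence}, writing $\Theta(\psi_1) = (\psi_{1,i})_{i\in[m]}$ and $\Theta(\psi_2) = (\psi_{2,i})_{i\in[m]}$, we have $\supp(\psi_a) = \bigcup_i \supp(\psi_{a,i})$ for $a = 1,2$, and $T$ only swaps the $j_2$-th components $\psi_{1,j_2} \leftrightarrow \psi_{2,j_2}$, so the union $\bigcup_{a,i} \supp(\psi_{a,i})$ is unchanged, hence so is $\supp(\tilde\psi_1) \cup \supp(\tilde\psi_2)$. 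Therefore the set $P$ whose knot decomposition we take is literally the same set, so the fixed knot decomposition $K_1 \cup \cdots \cup K_m$ is the same, the fixed bijection $\Theta(P)$ is the same, and (since $j_2$ depends only on $B_2$ and this knot decomposition) the index $j_2$ is the same. This also shows $(\tilde\psi_1, \tilde\psi_2) \in E(B_1, B_2)$ — which is needed merely for $T^2$ to be defined — since $j_1 \neq j_2$ for the same reason.

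With all of that pinned down, applying $T$ a second time uses the same $\Theta$ and same $j_2$: we have $\Theta(\tilde\psi_1) = (\tilde\psi_{1,i})_{i\in[m]}$ and $\Theta(\tilde\psi_2) = (\tilde\psi_{2,i})_{i\in[m]}$ with $\tilde\psi_{1,j_2} = \psi_{2,j_2}$, $\tilde\psi_{2,j_2} = \psi_{1,j_2}$ and all other coordinates unchanged; swapping the $j_2$-th coordinates again returns $(\psi_{1,i})_{i\in[m]}$ and $(\psi_{2,i})_{i\in[m]}$, and applying $\Theta^{-1}$ recovers $(\psi_1, \psi_2)$. Hence $T^2(\psi_1, \psi_2) = (\psi_1, \psi_2)$.

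The only subtle point — and the step I expect to require the most care — is the invariance of the data $(P, \Theta, j_2)$ under one application of $T$; everything else is formal. In particular one must be slightly careful that $\supp(\tilde\psi_a) \subseteq K_a'$-type containments and the equality $\supp(\tilde\psi_1)\cup\supp(\tilde\psi_2) = \supp(\psi_1)\cup\supp(\psi_2)$ are used correctly, since it is precisely this equality (not $\supp(\tilde\psi_1) = \supp(\psi_1)$, which is false in general) that forces the knot decomposition of $P$, and therefore the chosen $\Theta(P)$ and $j_2$, to coincide on the two passes. Given that, the involution property is immediate, and being an involution it is a bijection with $T^{-1} = T$.
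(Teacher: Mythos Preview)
Your proposal is correct and follows essentially the same approach as the paper: both arguments hinge on showing (via Corollary~\ref{cor:bijection-existence}) that $\supp(\tilde\psi_1)\cup\supp(\tilde\psi_2)=\supp(\psi_1)\cup\supp(\psi_2)$, so that the knot decomposition, $\Theta$, and $j_2$ are unchanged, after which the involution property reduces to $\swap$ being an involution. Your write-up is in fact slightly more careful than the paper's in explicitly noting that $(\tilde\psi_1,\tilde\psi_2)\in E(B_1,B_2)$ so that $T^2$ is defined.
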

\begin{proof}
If we think of $T$ as simply swapping the $j_2$-th coordinate, then it's clear that $T$ is an involution. This is the idea underlying the proof we proceed to give. Let $T_1, T_2$ be the coordinate functions of $T$. By the definition of $T$ and Corollary \ref{cor:bijection-existence}, we have that
\[ \supp(\psi_1) \cup \supp(\psi_2)  = \supp(T_1(\psi_1, \psi_2))\cup \supp(T_2(\psi_1, \psi_2)). \]
Let $\tilde{\psi}_i = T_i(\psi_1, \psi_2)$ for $i = 1, 2$.  The above implies that we get the exact same knot decomposition
\[ \supp(\tilde{\psi}_1)\cup \supp(\tilde{\psi}_2) \cup B_1 \cup B_2 = K_1 \cup \cdots \cup K_m. \]
Therefore in obtaining $T(\tilde{\psi}_1, \tilde{\psi}_2)$, we use the exact same maps $\Theta, \swap$ as were used to define $T(\psi_1, \psi_2)$. Now by definition, we have that
\[ T_1(\tilde{\psi}_1, \tilde{\psi}_2) = \Theta^{-1}(\swap_1(\Theta(\tilde{\psi}_1), \Theta(\tilde{\psi}_2))), \]
\[ \tilde{\psi}_1 = T_1(\psi_1, \psi_2) = \Theta^{-1}(\swap_1(\Theta(\psi_1), \Theta(\psi_2))), \]
\[ \tilde{\psi}_2 = T_2(\psi_1, \psi_2) = \Theta^{-1}(\swap_2(\Theta(\psi_1), \Theta(\psi_2))).  \]
Let $\eta_i = \swap_i(\Theta(\psi_1), \Theta(\psi_2))$ for $i = 1, 2$. Note that $(\eta_1, \eta_2) = \swap(\Theta(\psi_1), \Theta(\psi_2))$. Combining the above three displays, we obtain
\[ T_1(\tilde{\psi}_1, \tilde{\psi}_2) = \Theta^{-1}(\swap_1(\eta_1, \eta_2)) = \Theta^{-1}(\swap_1(\swap(\Theta(\psi_1), \Theta(\psi_2)))).  \]
Next, observe that $\swap$ is by construction an involution, from which we further obtain
\[  T_1(\tilde{\psi}_1, \tilde{\psi}_2) = \Theta^{-1}(\Theta(\psi_1)) = \psi_1.  \]
The same argument shows that $T_2(\tilde{\psi}_1, \tilde{\psi}_2) = \psi_2$. The desired result now follows.
\end{proof}

\begin{lemma}
For any $(\psi_1, \psi_2) \in E(B_1, B_2)$, we have that
\[\begin{split}
\prod_{p \in \plaquettes} \varphi_\beta&(\psi_1(\xi_p)) \prod_{p \in \plaquettes} \varphi_\beta(\psi_2(\xi_p)) = \\
&\prod_{p \in \plaquettes} \varphi_\beta(T_1(\psi_1, \psi_2)(\xi_p)) \prod_{p \in \plaquettes} \varphi_\beta(T_2(\psi_1, \psi_2)(\xi_p)) . 
\end{split}\]
As an immediate consequence, $\nu_{\lbox, \beta}^{\otimes 2}(T(\psi_1, \psi_2)) = \nu_{\lbox, \beta}^{\otimes 2}((\psi_1, \psi_2))$ (recall Definition \ref{def:nu}).
\end{lemma}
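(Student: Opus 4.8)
The key point is that $T$ acts by permuting the factors in the knot decomposition, and the product $\prod_{p}\varphi_\beta$ decomposes over knots via Corollary \ref{cor:bijection-existence}, so swapping one factor leaves the product invariant. Let me write out the steps.

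First I would fix $(\psi_1,\psi_2)\in E(B_1,B_2)$ and take the knot decomposition $P := \supp(\psi_1)\cup\supp(\psi_2)\cup B_1\cup B_2 = K_1\cup\cdots\cup K_m$, with $\Theta = \Theta(P)$ the associated bijection. Write $(\psi_{1,i}, i\in[m]) := \Theta(\psi_1)$ and $(\psi_{2,i}, i\in[m]) := \Theta(\psi_2)$. By Corollary \ref{cor:bijection-existence} (applied to each of $\psi_1$ and $\psi_2$, both of which have support inside $P$ since $\supp(\psi_a)\sse P$ for $a=1,2$), we have for every $p\in\plaquettes$ the factorization
\[ \varphi_\beta(\psi_a(\xi_p)) = \prod_{i\in[m]}\varphi_\beta(\psi_{a,i}(\xi_p)), \qquad a = 1,2. \]
Taking the product over all $p\in\plaquettes$ and then over $a=1,2$, the left-hand side of the claimed identity equals
\[ \prod_{a=1}^{2}\prod_{i\in[m]}\prod_{p\in\plaquettes}\varphi_\beta(\psi_{a,i}(\xi_p)). \]

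Next I would analyze the right-hand side the same way. By Lemma \ref{lemma:Tisinvolution} and the observation (already made in its proof) that $T_1(\psi_1,\psi_2)$ and $T_2(\psi_1,\psi_2)$ produce the exact same knot decomposition $K_1\cup\cdots\cup K_m$, we may use the same $\Theta$ and the same bijection to decompose $\varphi_\beta(T_a(\psi_1,\psi_2)(\xi_p))$. By the definition of $T$ and $\swap$, the tuple $\Theta(T_1(\psi_1,\psi_2))$ is $(\tilde\psi_{1,i}, i\in[m])$ where $\tilde\psi_{1,i} = \psi_{1,i}$ for $i\neq j_2$ and $\tilde\psi_{1,j_2} = \psi_{2,j_2}$; similarly $\Theta(T_2(\psi_1,\psi_2)) = (\tilde\psi_{2,i}, i\in[m])$ with $\tilde\psi_{2,i} = \psi_{2,i}$ for $i\neq j_2$ and $\tilde\psi_{2,j_2} = \psi_{1,j_2}$. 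Hence the right-hand side equals
\[ \prod_{a=1}^{2}\prod_{i\in[m]}\prod_{p\in\plaquettes}\varphi_\beta(\tilde\psi_{a,i}(\xi_p)). \]
The multiset $\{\tilde\psi_{1,i}, \tilde\psi_{2,i} : i\in[m]\}$ is exactly $\{\psi_{1,i}, \psi_{2,i} : i\in[m]\}$ — for each index $i\neq j_2$ the two factors are unchanged, and for $i = j_2$ the two factors $\psi_{1,j_2}, \psi_{2,j_2}$ are merely interchanged. Since the triple product is over an unordered collection of homomorphisms (equivalently, it is invariant under any permutation of the factors), the right-hand side equals the left-hand side, proving the identity.

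Finally, for the consequence: unwinding Definition \ref{def:nu}, $\nu_{\lbox,\beta}^{\otimes 2}((\psi_1,\psi_2)) = (Z^\nu_{\lbox,\beta})^{-2}\prod_{p\in\plaquettes}\varphi_\beta(\psi_1(\xi_p))\prod_{p\in\plaquettes}\varphi_\beta(\psi_2(\xi_p))$, and likewise for $T(\psi_1,\psi_2) = (T_1(\psi_1,\psi_2), T_2(\psi_1,\psi_2))$, so the identity just proved gives $\nu_{\lbox,\beta}^{\otimes 2}(T(\psi_1,\psi_2)) = \nu_{\lbox,\beta}^{\otimes 2}((\psi_1,\psi_2))$.

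I expect no serious obstacle here; this lemma is essentially bookkeeping. The one point requiring mild care is making sure the \emph{same} decomposition bijection $\Theta$ applies to all four homomorphisms $\psi_1,\psi_2,T_1(\psi_1,\psi_2),T_2(\psi_1,\psi_2)$ — this rests on the identity $\supp(\psi_1)\cup\supp(\psi_2) = \supp(T_1(\psi_1,\psi_2))\cup\supp(T_2(\psi_1,\psi_2))$, which follows from Corollary \ref{cor:bijection-existence} (the support of $\psi_a$ is the union of the supports of its knot-components, and $\swap$ only permutes components) and was already recorded in the proof of Lemma \ref{lemma:Tisinvolution}. Everything else is the commutativity of a finite product.
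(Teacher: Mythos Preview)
Your proof is correct and follows essentially the same approach as the paper: factorize both sides over the knot decomposition via Corollary~\ref{cor:bijection-existence}, then observe that $T$ merely swaps the $j_2$-th components between the two tuples, leaving the overall product unchanged. The paper's argument is slightly terser but structurally identical.
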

\begin{proof}
Let all notation be as in Definition \ref{def:T}. For $i = 1, 2$, let $\Theta(\psi_i) = (\psi_{i, j}, j \in [m])$. By Corollary \ref{cor:bijection-existence}, we have that
\[ \prod_{p \in \plaquettes} \varphi_\beta(\psi_i(\xi_p)) = \prod_{p \in \plaquettes} \prod_{j \in [m]} \varphi_\beta(\psi_{i, j}(\xi_p)), ~~ i = 1, 2. \]
It follows that
\[\begin{split}
\prod_{p \in \plaquettes} \varphi_\beta(\psi_1(\xi_p))& \prod_{p \in \plaquettes} \varphi_\beta(\psi_2(\xi_p)) = \\
&\prod_{p \in \plaquettes} \bigg( \varphi_\beta(\psi_{2, j_2}(\xi_p)) \prod_{\substack{j \neq j_2}} \varphi_\beta(\psi_{1, j}(\xi_p))\bigg)  ~\times \\
& \prod_{p \in \plaquettes} \bigg( \varphi_\beta(\psi_{1, j_2}(\xi_p)) \prod_{\substack{j \neq j_2}} \varphi_\beta(\psi_{2, j}(\xi_p)) \bigg).
\end{split}\]
By Corollary \ref{cor:bijection-existence} and the definition of $T$, the right hand side above is exactly
\[ \prod_{p \in \plaquettes} \varphi_\beta(T_1(\psi_1, \psi_2)(\xi_p)) \prod_{p \in \plaquettes} \varphi_\beta(T_2(\psi_1, \psi_2)(\xi_p)), \]
and thus the desired result follows.
\end{proof}

\begin{lemma}
Let $k_1, k_2 \geq 1$, and let $f_1 : G^{k_1} \ra \C, f_2 : G^{k_2} \ra \C$ be conjugacy invariant functions. For $i = 1, 2$, let $\wloop_1^{(i)}, \ldots, \wloop_{k_i}^{(i)}$ be closed loops contained in $B_i$. For any $(\psi_1, \psi_2) \in E(B_1, B_2)$, the following holds. Let $\tilde{\psi}_1 = T_1(\psi_1, \psi_2)$. Then we have that
\[\begin{split}
f_1(&\psi_1(\xi_{\wloop_1^{(1)}}), \ldots, \psi_1(\xi_{\wloop_{k_1}^{(1)}})) f_2 (\psi_2(\xi_{\wloop^{(2)}_1}), \ldots, \psi_2(\xi_{\wloop^{(2)}_{k_2}})) = \\
& f_1(\tilde{\psi}_1(\xi_{\wloop_1^{(1)}}), \ldots, \tilde{\psi}_1(\xi_{\wloop_{k_1}^{(1)}})) f_2(\tilde{\psi}_1(\xi_{\wloop^{(2)}_1}), \ldots, \tilde{\psi}_1(\xi_{\wloop^{(2)}_{k_2}})) .
\end{split}\]
\end{lemma}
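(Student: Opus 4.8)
The plan is to unwind the definition of $T$ in terms of the knot-decomposition bijection $\Theta$ and the coordinate swap $\swap$, and then use the final ``conjugacy'' clause of Corollary \ref{cor:bijection-existence} to see what happens to the holonomies $\psi(\xi_\wloop)$ when $\wloop$ lies in $B_1$ or $B_2$. Write $P := \supp(\psi_1) \cup \supp(\psi_2) \cup B_1 \cup B_2 = K_1 \cup \cdots \cup K_m$ for the knot decomposition, and let $j_1, j_2$ be the indices with $B_i \sse K_{j_i}$; by the defining property of $E(B_1,B_2)$ we have $j_1 \neq j_2$. For $i = 1, 2$ write $\Theta(\psi_i) = (\psi_{i,j}, j \in [m])$, so that $\supp(\psi_{i,j}) \sse K_j$. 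The key point is that $\tilde\psi_1 = T_1(\psi_1, \psi_2) = \Theta^{-1}(\swap_1(\Theta(\psi_1), \Theta(\psi_2)))$ corresponds under $\Theta$ to the tuple that agrees with $(\psi_{1,j})$ except in coordinate $j_2$, where it is $\psi_{2, j_2}$.

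First I would handle the loops $\wloop^{(1)}_a \sse B_1 \sse K_{j_1}$. Applying the last sentence of Corollary \ref{cor:bijection-existence} to $\psi_1$ with $B_0 = B_1$ and the plaquette set $K_{j_1}$ (note $j_1 \neq j_2$, so this coordinate is untouched by $\swap_1$), we get that $\psi_1(\xi_{\wloop^{(1)}_a})$ is conjugate to $\psi_{1,j_1}(\ell^{-1}\wloop^{(1)}_a \ell)$ for an appropriate path $\ell$, where $\xi_{\wloop^{(1)}_a} = \ell(x_0,\cdot)\,\wloop^{(1)}_a\,\ell(x_0,\cdot)^{-1}$ is as in Definition \ref{def:xi-gamma}. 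Likewise $\tilde\psi_1(\xi_{\wloop^{(1)}_a})$ is conjugate to $\tilde\psi_{1,j_1}(\ell^{-1}\wloop^{(1)}_a\ell) = \psi_{1,j_1}(\ell^{-1}\wloop^{(1)}_a\ell)$, since the $j_1$-th coordinate of $\Theta(\tilde\psi_1)$ equals that of $\Theta(\psi_1)$. Hence $\psi_1(\xi_{\wloop^{(1)}_a})$ and $\tilde\psi_1(\xi_{\wloop^{(1)}_a})$ are conjugate in $G$, simultaneously for all $a \in [k_1]$ via the same conjugating element (the path $\ell$ depends only on $\wloop^{(1)}_a$, and the conjugations relating $\psi(\xi)$ to $\psi(\ell^{-1}\wloop\ell)$ in Corollary \ref{cor:bijection-existence} can be tracked, but in fact one only needs that $\psi_1(\xi_{\wloop^{(1)}_a})$ and $\tilde\psi_1(\xi_{\wloop^{(1)}_a})$ are conjugate, with conjugating element not depending on $a$ — this follows because both are built from the single homomorphism comparison in coordinate $j_1$). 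By conjugacy invariance of $f_1$, this gives $f_1(\psi_1(\xi_{\wloop^{(1)}_1}), \ldots) = f_1(\tilde\psi_1(\xi_{\wloop^{(1)}_1}), \ldots)$.

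Next I would treat the loops $\wloop^{(2)}_b \sse B_2 \sse K_{j_2}$, which is where the swap actually matters. Applying Corollary \ref{cor:bijection-existence} to $\psi_2$ with $B_0 = B_2$ and plaquette set $K_{j_2}$, we get $\psi_2(\xi_{\wloop^{(2)}_b})$ conjugate to $\psi_{2,j_2}(\ell^{-1}\wloop^{(2)}_b\ell)$. On the other side, since the $j_2$-th coordinate of $\Theta(\tilde\psi_1)$ is precisely $\psi_{2,j_2}$, applying the same corollary to $\tilde\psi_1$ gives $\tilde\psi_1(\xi_{\wloop^{(2)}_b})$ conjugate to $\psi_{2,j_2}(\ell^{-1}\wloop^{(2)}_b\ell)$ as well. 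Therefore $\psi_2(\xi_{\wloop^{(2)}_b})$ and $\tilde\psi_1(\xi_{\wloop^{(2)}_b})$ are conjugate for each $b$, again with a common conjugating element, so conjugacy invariance of $f_2$ yields $f_2(\psi_2(\xi_{\wloop^{(2)}_1}), \ldots) = f_2(\tilde\psi_1(\xi_{\wloop^{(2)}_1}), \ldots)$. Multiplying the two identities gives the claim.

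The main obstacle I anticipate is bookkeeping the conjugacy carefully enough that a \emph{single} conjugating element works across all the loops $\wloop^{(i)}_1, \ldots, \wloop^{(i)}_{k_i}$ feeding into $f_i$, since conjugacy invariance of $f_i$ as defined allows a different $h_a$ for each argument, so in fact no uniformity is needed — one just needs, for each fixed $a$, that $\psi(\xi_{\wloop^{(i)}_a})$ and the swapped version are conjugate, which is exactly what Corollary \ref{cor:bijection-existence} delivers by comparing the relevant single coordinate. So the argument really does reduce to: (i) observe that $f$'s arguments are, up to conjugation, determined by a single knot-coordinate of $\Theta(\psi)$; (ii) observe that $\swap_1$ replaces exactly the coordinate $j_2$, leaving $j_1$ fixed; (iii) match up coordinates on both sides. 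The only genuinely delicate point is making sure the path $\ell$ used in the two applications of the corollary (to $\psi_i$ and to $\tilde\psi_1$) can be taken to be the same — but this is immediate since $\ell$ depends only on the loop $\wloop$ and the basepoint $x_0$, not on the homomorphism.
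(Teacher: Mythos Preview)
Your proposal is correct and follows essentially the same route as the paper: decompose $\psi_1,\psi_2,\tilde\psi_1$ via $\Theta$, observe that $\Theta(\tilde\psi_1)$ agrees with $\Theta(\psi_1)$ in coordinate $j_1$ and with $\Theta(\psi_2)$ in coordinate $j_2$, and then invoke the conjugacy clause of Corollary~\ref{cor:bijection-existence} together with conjugacy invariance of $f_1,f_2$. Your extended discussion of whether a single conjugating element is needed is unnecessary (as you yourself note, the definition of conjugacy invariance allows a separate $h_a$ per argument), and the paper simply omits this digression.
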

\begin{proof}
Let all notation be as in Definition \ref{def:T}. For $i = 1, 2$, let $\Theta(\psi_i) = (\psi_{i, j}, j \in [m])$. By Corollary \ref{cor:bijection-existence}, we have that
\beq\label{eq:wilson-loop-localize-1} \psi_i(\xi_{\gamma_l^{(i)}}) \text{ is conjugate to } \psi_{i, j_i}(\xi_{\gamma_l^{(i)}}), ~~ l \in [k_i], ~~ i = 1, 2. \eeq
Now by construction, we have that
\[ \Theta(\tilde{\psi}_1) = (\psi_{1, 1}, \ldots, \psi_{1, j_2 - 1}, \psi_{2, j_2}, \psi_{1, j_2 + 1}, \ldots, \psi_{1, m}).\]
Thus by Corollary \ref{cor:bijection-existence} and equation \eqref{eq:wilson-loop-localize-1}, we have that
\[ \tilde{\psi}_1(\xi_{\wloop_l^{(1)}}) \text{ is conjugate to } \psi_{1, j_1}(\xi_{\wloop_l^{(1)}}) \text{ is conjugate to } \psi_1(\xi_{\wloop_l^{(1)}}), ~~ l \in [k_1]\]
\[ \tilde{\psi}_1(\xi_{\wloop_l^{(2)}}) \text{ is conjugate to } \psi_{2, j_2}(\xi_{\wloop_l^{(2)}}) \text{ is conjugate to } \psi_2(\xi_{\gamma_l^{(2)}}), ~~ l \in [k_2]. \]
Since $f_1, f_2$ are conjugacy invariant, the desired result now follows.
\end{proof}

The previous few lemmas combine to immediately imply the following corollary.

\begin{cor}\label{cor:swapping-map}
For any functions $h_1, h_2 : \homspace \ra \C$ as in Proposition \ref{prop:existence-of-swapping-map}, $T$ is a swapping map with respect to $(E(B_1, B_2), h_1, h_2, \nu_{\lbox, \beta})$.
\end{cor}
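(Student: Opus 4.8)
The statement to prove is Corollary \ref{cor:swapping-map}, which asserts that $T$ is a swapping map with respect to $(E(B_1, B_2), h_1, h_2, \nu_{\lbox, \beta})$ for any $h_1, h_2$ of the form in Proposition \ref{prop:existence-of-swapping-map}. The plan is essentially to assemble the three preceding lemmas, so the ``proof'' is a matter of checking that each clause of Definition \ref{def:swapping-map} has been verified somewhere above and that the hypotheses line up.

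First I would recall what Definition \ref{def:swapping-map} demands: (i) $T : E(B_1, B_2) \to E(B_1, B_2)$ is a bijection; (ii) $\nu_{\lbox,\beta}^{\otimes 2}$ is $T$-invariant pointwise on $E(B_1, B_2)$; and (iii) the product structure $h_1(\psi_1) h_2(\psi_2) = h_1(\tilde\psi_1) h_2(\tilde\psi_1)$ holds, where $(\tilde\psi_1, \tilde\psi_2) = T(\psi_1, \psi_2)$. I would then cite, in order: Lemma \ref{lemma:Tisinvolution} for (i) (in fact $T$ is an involution, hence a bijection, and it is well-defined as a map into $E(B_1,B_2)$ since, as shown in that proof, the knot decomposition of $\supp(\tilde\psi_1) \cup \supp(\tilde\psi_2) \cup B_1 \cup B_2$ equals that of $\supp(\psi_1)\cup\supp(\psi_2)\cup B_1 \cup B_2$, and $j_1 \neq j_2$ is preserved); the penultimate lemma (the one about $\prod_p \varphi_\beta$) for (ii); and the final lemma (the one about $f_1, f_2$) for (iii), after unwinding the definition $h_i(\psi) = f_i(\psi(\xi_{\gamma^{(i)}_1}), \ldots, \psi(\xi_{\gamma^{(i)}_{k_i}}))$.

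The only genuine content beyond citation is checking that the $h_1, h_2$ in Proposition \ref{prop:existence-of-swapping-map} are precisely of the form handled by the third lemma, and that $E(B_1, B_2)$ is nonempty/well-defined on the relevant domain — but both are immediate: the $h_i$ in the proposition are built from conjugacy-invariant $f_i : G^{k_i} \to \C$ and closed loops $\gamma^{(i)}_1, \ldots, \gamma^{(i)}_{k_i}$ contained in $B_i$, which is exactly the hypothesis of that lemma. I do not expect any obstacle here; the corollary is a packaging statement, and the genuinely hard work (constructing $\Theta$, verifying $T$ maps $E(B_1,B_2)$ to itself, and the three lemmas) has already been done. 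If I wanted to be careful I would also note that $T$ being well-defined requires the knot decomposition and the choice $\Theta(P)$ to be fixed once and for all (Definitions \ref{def:knot-decomposition} and \ref{def:fix-bijection}), so that the same $\Theta$ is used when evaluating $T$ at $(\psi_1,\psi_2)$ and at $T(\psi_1,\psi_2)$ — this is exactly the point exploited in the proof of Lemma \ref{lemma:Tisinvolution}, and it is what makes all three lemmas mutually compatible.

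\begin{proof}[Proof of Corollary \ref{cor:swapping-map}]
Let $h_1, h_2 : \homspace \to \C$ be as in Proposition \ref{prop:existence-of-swapping-map}, so that for $i = 1, 2$ there are a conjugacy invariant function $f_i : G^{k_i} \to \C$ and closed loops $\wloop^{(i)}_1, \ldots, \wloop^{(i)}_{k_i}$ contained in $B_i$ with $h_i(\psi) = f_i(\psi(\xi_{\gamma^{(i)}_1}), \ldots, \psi(\xi_{\gamma^{(i)}_{k_i}}))$. We verify the conditions of Definition \ref{def:swapping-map}. By Lemma \ref{lemma:Tisinvolution}, the map $T : E(B_1, B_2) \to E(B_1, B_2)$ is an involution, hence a bijection. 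Condition (1) of Definition \ref{def:swapping-map} is precisely the second assertion of the second-to-last lemma above. Finally, for condition (2), write $(\tilde\psi_1, \tilde\psi_2) = T(\psi_1, \psi_2)$ for $(\psi_1, \psi_2) \in E(B_1, B_2)$; then by the definitions of $h_1, h_2$ and the last lemma above (applied with these $f_i$ and $\wloop^{(i)}_l$), we have
\[ h_1(\psi_1) h_2(\psi_2) = h_1(\tilde\psi_1) h_2(\tilde\psi_1). \]
Thus $T$ is a swapping map with respect to $(E(B_1, B_2), h_1, h_2, \nu_{\lbox, \beta})$.
\end{proof}
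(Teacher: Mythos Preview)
Your proposal is correct and takes essentially the same approach as the paper: the paper itself simply states that the corollary is an immediate consequence of the preceding three lemmas (Lemma \ref{lemma:Tisinvolution} and the two unnamed lemmas following it), and you have spelled out exactly how those lemmas match up with the clauses of Definition \ref{def:swapping-map}.
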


\section{Probability bound}\label{section:probability-bound}

In this section, we prove the probability bound from Proposition \ref{prop:existence-of-swapping-map}.

\begin{prop}\label{prop:percolation-probability-bound}
Let $\beta, L, B_1, B_2$ be as in Theorem \ref{thm:main-general}. Let $E(B_1, B_2)$ be defined using $B_1, B_2$, as in Definition \ref{def:E-gamma-1-gamma-2}. Let $\Psi_1, \Psi_2 \stackrel{i.i.d.}{\sim} \nu_{\lbox, \beta}$. Then
\[ \p((\Psi_1, \Psi_2) \notin E(B_1, B_2)) \leq  2 (4 \cdot 10^{24} |G|^2)^{|B_1| + |B_2|} e^{-(\beta / 2) \Delta_G (L-1)}.\]
\end{prop}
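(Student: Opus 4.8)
The plan is to bound the probability of the bad event $(\Psi_1,\Psi_2) \notin E(B_1,B_2)$ by controlling, via a union/percolation-style argument, the event that $B_1$ and $B_2$ end up in the same knot of the knot decomposition of $P := \supp(\Psi_1)\cup\supp(\Psi_2)\cup B_1\cup B_2$. By Definition \ref{def:E-gamma-1-gamma-2}, failure means that in the knot decomposition $P = K_1\cup\cdots\cup K_m$, the (unique) knots containing $B_1$ and $B_2$ coincide. Intuitively, this forces the existence of a ``bridge'' of plaquettes from $\supp(\Psi_1)\cup\supp(\Psi_2)$ connecting $B_1$ to $B_2$: if $B_1$ and $B_2$ were not joined by such a chain, one could find a separating good rectangle (using Lemma \ref{lemma:good-rectangle-examples} to produce good separating rectangles from half-spaces), which would split the common knot and contradict maximality of the knot decomposition. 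So the first step is a purely combinatorial lemma: if $B_1,B_2$ lie in a common knot $K$, then there is a knot $K' \subseteq \supp(\Psi_1)\cup\supp(\Psi_2)$ that is ``large'' in the sense that it touches both a neighborhood of $B_1$ and a neighborhood of $B_2$, hence (because $B_1,B_2$ are at $\ell^\infty$-distance at least $L$) has size at least of order $L-1$, or more precisely connects plaquettes at distance $\geq L-1$.

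The second step is the probabilistic estimate: bound $\p(\supp(\Psi_i) \supseteq K')$ for a fixed knot $K'$, and then sum over $K'$. Since $\Psi_i \sim \nu_{\lbox,\beta}$, and $\nu_{\lbox,\beta}(\psi) \propto \prod_p \varphi_\beta(\psi(\xi_p))$ with $\varphi_\beta(g) \leq e^{-\beta \Delta_G}$ for $g \neq 1$, a standard argument (of the type in \cite{Cao2020}) gives that for any fixed plaquette set $Q$, $\p(Q \subseteq \supp(\Psi_i)) \leq (|G| e^{-\beta\Delta_G})^{|Q|}$ or similar — the point being each plaquette forced to be in the support contributes a factor $\varphi_\beta$ together with at most $|G|$ choices of group element (using Lemma \ref{lemma:homomorphism-number-bound} to count homomorphisms with a given support). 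Actually more carefully: one wants to directly estimate $\p(\text{some knot } K' \ni p, \ |K'| = n, K' \subseteq \supp(\Psi_1)\cup\supp(\Psi_2))$ by combining Lemma \ref{lemma:knot-number-bound} (at most $(10^{24})^n$ knots of size $n$ through a fixed $p$) with the support probability bound and a factor $2^n$ or so from splitting $K'$ between $\supp(\Psi_1)$ and $\supp(\Psi_2)$. This yields a sum $\sum_{n \geq c(L-1)} (\text{const}\cdot |G|^2 e^{-\beta\Delta_G})^n$ which, under the hypothesis $\beta \geq \frac{1}{\Delta_G}(114 + 4\log|G|)$, is a convergent geometric series dominated by its first term, giving the $e^{-(\beta/2)\Delta_G(L-1)}$ decay with the claimed constant. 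The $|B_1|+|B_2|$ in the exponent of $(4\cdot 10^{24}|G|^2)^{|B_1|+|B_2|}$ presumably enters because the ``bridging'' knot $K'$ may be anchored at a plaquette within distance $O(1)$ of $B_1$, so one has a choice of roughly $|B_1|$ (or $|B_1| + |B_2|$) starting plaquettes, or because $B_1, B_2$ themselves may need to be absorbed into $K'$.

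Concretely, the key steps in order: (i) prove the combinatorial claim that $(\psi_1,\psi_2) \notin E(B_1,B_2)$ implies there exists a connected (in the appropriate plaquette-adjacency sense used for knots) set $K' \in \mc{K}$ with $K' \subseteq \supp(\psi_1)\cup\supp(\psi_2)$ such that $K' \cup B_1 \cup B_2$ is a single knot, forcing $K'$ to ``span'' the gap and hence $|K'| \gtrsim L-1$ — I'd expect to argue by contraposition, building a separating good rectangle when no such spanning $K'$ exists; (ii) for a fixed candidate $K'$ of size $n$, bound $\p(K' \subseteq \supp(\Psi_1)\cup\supp(\Psi_2)) \leq \sum_{K' = A \sqcup A'} \p(A \subseteq \supp(\Psi_1))\p(A' \subseteq \supp(\Psi_2)) \leq 2^n (|G|e^{-\beta\Delta_G/1})^{n}$-type bound via independence of $\Psi_1,\Psi_2$ and the support-probability lemma; (iii) union bound over $n \geq c(L-1)$ and over the $\lesssim (|B_1|+|B_2|) (10^{24})^n$ choices of $K'$ via Lemma \ref{lemma:knot-number-bound}; (iv) sum the geometric series using the lower bound on $\beta$ to extract one factor $e^{-(\beta/2)\Delta_G(L-1)}$ and absorb the rest into the constant $(4\cdot 10^{24}|G|^2)^{|B_1|+|B_2|}$.

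The main obstacle I anticipate is step (i): carefully formalizing why $B_1$ and $B_2$ sharing a knot forces a long connecting chain of support plaquettes, which is a topological/combinatorial statement about the knot decomposition (maximality, well-separatedness, and the fact that good rectangles are abundant). One has to be precise about the adjacency notion under which knots are ``connected'' — plausibly two plaquettes are in the same knot only if they cannot be separated by a good rectangle, and one must rule out separating rectangles that slice between $B_1$ and $B_2$ when the support doesn't bridge them; Lemma \ref{lemma:good-rectangle-examples}(b) (half-space rectangles are good) is the natural tool, since between two $\ell^\infty$-far rectangles there is a coordinate hyperplane separating them. Getting the precise geometric bound $|K'| \geq L-1$ (matching the $(L-1)$ in the exponent) out of this separation argument, rather than a weaker $L - O(1)$, is the delicate point; I'd want to set it up so that failure to have a size-$(L-1)$ bridging knot yields a good separating rectangle directly. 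Step (ii)'s support-probability bound should be routine given Lemmas \ref{lemma:homomorphism-number-bound} and \ref{lemma:psi-Sigma-distribution}, and steps (iii)--(iv) are bookkeeping with the explicit constants chosen precisely so the series converges at the stated $\beta$ threshold.
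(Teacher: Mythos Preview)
Your overall Peierls strategy is correct and matches the paper's, but two aspects of the execution need repair. First, your step (i) as stated---extracting a knot $K' \in \mc{K}$ with $K' \subseteq \supp(\psi_1) \cup \supp(\psi_2)$---does not work: removing $B_1 \cup B_2$ from the knot $K$ containing both need not leave an element of $\mc{K}$, so Lemma~\ref{lemma:knot-number-bound} would not apply to the resulting $K'$. The paper instead takes your second alternative (``absorb $B_1,B_2$'') and works directly with the knot $K \supseteq B_1 \cup B_2$ in the decomposition of $\supp(\psi_1)\cup\supp(\psi_2)\cup B_1\cup B_2$; Lemma~\ref{lemma:knot-size-lower-bound} (proved exactly via the half-space good rectangles of Lemma~\ref{lemma:good-rectangle-examples}(b), as you anticipate) gives $|K| \geq |B_1| + |B_2| + L - 1$, and the plaquettes of $B_1 \cup B_2$ are treated as ``free'' via a base set $P_0 = B_1 \cup B_2$ in a two-copy polymer function $\Phi^{(2)}_{P_0}$. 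This is where the prefactor $(4\cdot 10^{24}|G|^2)^{|B_1|+|B_2|}$ comes from.

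Second, and more substantively, your step (ii) bound $\p(A \subseteq \supp(\Psi_i)) \lesssim (|G| e^{-\beta \Delta_G})^{|A|}$ is not a routine ``support-probability lemma'': homomorphisms $\psi \in \Omega$ are non-local objects (you cannot alter $\psi(\xi_p)$ at one plaquette independently of the others), so the usual Peierls move ``erase the configuration on $A$ and compare weights'' has no direct meaning for an arbitrary plaquette set $A$. The paper's substitute is to bound not the event $\{K \subseteq \supp_{P_0}(\Psi_1,\Psi_2)\}$ but the stronger event $F_K$ that $K$ appears \emph{as a knot} in the decomposition of $\supp_{P_0}(\Psi_1,\Psi_2)$. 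This is precisely what makes the comparison possible: on $F_K$, the bijection of Corollary~\ref{cor:bijection-existence} yields the factorization $\Phi^{(2)}_{P_0}(P) = \Phi^{(2)}_{P_0}(K)\,\Phi^{(2)}(P\setminus K)$ (Corollary~\ref{cor:Phi-2-factorization}), and summing over $P$ whose knot decomposition contains $K$ then gives $\p(F_K) \leq \Phi^{(2)}_{P_0}(K)$ (Lemma~\ref{lemma:knot-appear-prob-bound-by-Phi}). After that, a crude bound $\Phi^{(2)}_{P_0}(K) \leq (4|G|^2)^{|K|} e^{-\beta\Delta_G(|K| - |P_0|)}$ (Lemma~\ref{lemma:Phi-bound}) together with the knot count finishes exactly as in your steps (iii)--(iv). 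The factorization is the heart of the argument and is where the knot machinery earns its keep; your sketch elides it.
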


As we will see, the proof is essentially a Peierls argument.~To begin towards the proof, we make the following definitions.

\begin{defn}
Let $P \sse \plaquettes$. Given $\psi_1, \psi_2 \in \homspace$, define
\[ \supp_P(\psi_1, \psi_2) := \supp(\psi_1) \cup \supp(\psi_2) \cup P. \]
\end{defn}

\begin{defn}
Let $P_0, P \sse \plaquettes$.  Define
\[ \Phi^{(2)}_{P_0}(P) := \sum_{\substack{ \psi_1, \psi_2 \in \homspace \\ \supp_{P_0}(\psi_1, \psi_2) = P}} \prod_{p \in \plaquettes} \varphi_\beta(\psi_1(\xi_p)) \varphi_\beta(\psi_2(\xi_p)).  \]
Define $\Phi^{(2)}(P) := \Phi^{(2)}_{\varnothing}(P)$, i.e. if there is no subscript, then by default we take $P_0$ to be the empty set.
\end{defn}

The following lemma shows that the function $\Phi^{(2)}_P$ factors according to the notion of well separated (recall Definition \ref{def:well-separated}).

\begin{lemma}\label{lemma:Phi-2-factorization}
Let $P_1, P_2 \sse \plaquettes$ be well separated by a good rectangle $B$ in $\lbox$. Then
\[ \Phi^{(2)}(P_1 \cup P_2) = \Phi^{(2)}(P_1) \Phi^{(2)}(P_2). \]
Additionally, let $P_0 \sse \plaquettes$, and suppose that $P_0 \sse P_1$ (resp. $P_0 \sse P_2$). Then
\[ \Phi^{(2)}_{P_0}(P_1 \cup P_2) = \Phi^{(2)}_{P_0}(P_1) \Phi^{(2)}(P_2) \text{ (resp. $\Phi^{(2)}(P_1) \Phi^{(2)}_{P_0}(P_2)$)}. \]
\end{lemma}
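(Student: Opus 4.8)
The plan is to prove the factorization of $\Phi^{(2)}$ by appealing to Corollary \ref{cor:bijection-existence}, exactly as in the proof of Lemma \ref{lemma:bijection-existence}. First I would observe that, since $P_1$ is well separated from $P_2$ by the good rectangle $B$, Corollary \ref{cor:bijection-existence} (applied with $k = 2$) gives a bijection $\psi \leftrightarrow (\eta_1, \eta_2)$ between $\{\psi \in \homspace : \supp(\psi) \sse P_1 \cup P_2\}$ and $\{(\eta_1, \eta_2) : \supp(\eta_i) \sse P_i\}$, under which $\varphi_\beta(\psi(\xi_p)) = \varphi_\beta(\eta_1(\xi_p)) \varphi_\beta(\eta_2(\xi_p))$ for all $p \in \plaquettes$, and $\supp(\psi) = \supp(\eta_1) \cup \supp(\eta_2)$. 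Applying this bijection to each coordinate of a pair $(\psi_1, \psi_2)$ (each of which has support in $P_1 \cup P_2$ whenever $\supp_{\varnothing}(\psi_1, \psi_2) = P_1 \cup P_2$), I get a bijection between pairs $(\psi_1, \psi_2)$ with $\supp(\psi_1) \cup \supp(\psi_2) = P_1 \cup P_2$ and quadruples $(\eta_{1,1}, \eta_{1,2}, \eta_{2,1}, \eta_{2,2})$ with $\supp(\eta_{a,i}) \sse P_i$.

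Next I would rewrite the summand of $\Phi^{(2)}(P_1 \cup P_2)$ under this bijection. The product $\prod_p \varphi_\beta(\psi_1(\xi_p)) \varphi_\beta(\psi_2(\xi_p))$ factors as $\big(\prod_p \varphi_\beta(\eta_{1,1}(\xi_p)) \varphi_\beta(\eta_{1,2}(\xi_p))\big)\big(\prod_p \varphi_\beta(\eta_{2,1}(\xi_p)) \varphi_\beta(\eta_{2,2}(\xi_p))\big)$, and after regrouping by the $P_1$-components versus the $P_2$-components this is $\big(\prod_p \varphi_\beta(\eta_{1,1}(\xi_p)) \varphi_\beta(\eta_{2,1}(\xi_p))\big)\big(\prod_p \varphi_\beta(\eta_{1,2}(\xi_p)) \varphi_\beta(\eta_{2,2}(\xi_p))\big)$. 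The constraint $\supp(\psi_1) \cup \supp(\psi_2) = P_1 \cup P_2$ becomes $\supp(\eta_{1,1}) \cup \supp(\eta_{2,1}) \cup \supp(\eta_{1,2}) \cup \supp(\eta_{2,2}) = P_1 \cup P_2$; but since $P_1, P_2$ are disjoint (well separation forces $P_1 \sse S_2(B)$, $P_2 \sse S_2^c(B)$ with neither meeting $\ptl S_2(B)$, so they are disjoint) and $\supp(\eta_{a,i}) \sse P_i$, this single constraint splits as $\supp(\eta_{1,1}) \cup \supp(\eta_{2,1}) = P_1$ and $\supp(\eta_{1,2}) \cup \supp(\eta_{2,2}) = P_2$. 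Hence the double sum factors into a product of two sums, each of which is recognizably $\Phi^{(2)}(P_1)$ and $\Phi^{(2)}(P_2)$ respectively (after reversing the coordinatewise bijection of Corollary \ref{cor:bijection-existence} for the rectangle pairs within $P_1$ and within $P_2$ — note that the pair $(\eta_{1,1}, \eta_{2,1})$ with supports in $P_1$ corresponds back to a pair $(\psi_1', \psi_2')$ with $\supp(\psi_1') \cup \supp(\psi_2') = P_1$).

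For the second statement, with $P_0 \sse P_1$, the only change is that the constraint $\supp_{P_0}(\psi_1, \psi_2) = P_1 \cup P_2$, i.e. $\supp(\psi_1) \cup \supp(\psi_2) \cup P_0 = P_1 \cup P_2$, splits (again using disjointness of $P_1, P_2$ and $P_0 \sse P_1$) into $\supp(\eta_{1,1}) \cup \supp(\eta_{2,1}) \cup P_0 = P_1$ and $\supp(\eta_{1,2}) \cup \supp(\eta_{2,2}) = P_2$, which gives $\Phi^{(2)}_{P_0}(P_1) \Phi^{(2)}(P_2)$; the case $P_0 \sse P_2$ is symmetric. I do not expect a serious obstacle here; the one point requiring a little care is checking that the two-coordinate structure interacts correctly with the bijection of Corollary \ref{cor:bijection-existence} and that the support constraint really does decouple, which rests on the disjointness of $P_1$ and $P_2$ — I would state that disjointness explicitly at the start of the proof and reference Definition \ref{def:well-separated}.
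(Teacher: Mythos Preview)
Your proposal is correct and follows essentially the same route as the paper: apply the bijection of Corollary~\ref{cor:bijection-existence} (with $k=2$) coordinatewise to each of $\psi_1,\psi_2$, regroup the resulting four pieces by $P_1$ versus $P_2$, and use disjointness of $P_1,P_2$ to split the support constraint. One small simplification: there is no need to ``reverse the bijection'' at the end --- the pair $(\eta_{1,1},\eta_{2,1})$ already lies in $\homspace^2$ with $\supp(\eta_{1,1})\cup\supp(\eta_{2,1})=P_1$, so the resulting sum is $\Phi^{(2)}(P_1)$ directly from the definition.
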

\begin{proof}
To show the first identity, we need to show that there is a bijection between the sets
\[ E := \{(\psi_1, \psi_2) \in \homspace^2 : \supp(\psi_1, \psi_2) = P_1 \cup P_2 \}\]
and $E_1 \times E_2$, where
\[ E_i := \{(\psi_{1, i}, \psi_{2, i}) \in \homspace^2 : \supp(\psi_{1, i}, \psi_{2, i}) = P_i \}, ~~ i = 1,2, \]
and moreover, if $(\psi_1, \psi_2)$ is mapped to $((\psi_{1, 1}, \psi_{2, 1}), (\psi_{1, 2}, \psi_{2, 2}))$ by the bijection, then we have that
\[\begin{split}
\prod_{p \in \plaquettes}& \varphi_\beta(\psi_1(\xi_p)) \varphi_\beta(\psi_2(\xi_p)) = \\
&\prod_{p \in \plaquettes} \varphi_\beta(\psi_{1, 1}(\xi_p)) \varphi_\beta(\psi_{2, 1}(\xi_p))
\prod_{p \in \plaquettes} \varphi_\beta(\psi_{1, 2}(\xi_p)) \varphi_\beta(\psi_{2, 2}(\xi_p)). 
\end{split}\]
Let $\Theta$ be a bijection as in Corollary \ref{cor:bijection-existence} corresponding to $P_1, P_2$. Given $(\psi_1, \psi_2) \in E$ and $i = 1, 2$, let $\Theta(\psi_i) = (\psi_{i, 1}, \psi_{i, 2})$. Define the map $E \ra E_1 \times E_2$ by $(\psi_1, \psi_2) \mapsto ((\psi_{1, 1}, \psi_{2, 1}), (\psi_{1, 2}, \psi_{2, 2}))$. The fact that this is a bijection with the required properties follows by Corollary \ref{cor:bijection-existence}. The second identity may be similarly argued.
\end{proof}

By repeated applications of Lemma  \ref{lemma:Phi-2-factorization}, we can obtain the following corollary. The proof is omitted.

\begin{cor}\label{cor:Phi-2-factorization}
Let $P_0 \sse \plaquettes$. Let $K \sse \plaquettes$ be a knot such that $K \supseteq P_0$. If $P \sse \plaquettes$ is such that $K$ appears in the knot decomposition of $P$, then
\[ \Phi^{(2)}_{P_0}(P) = \Phi^{(2)}_{P_0}(K)  \Phi^{(2)}(P - K). \]
\end{cor}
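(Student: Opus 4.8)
The plan is to prove Corollary~\ref{cor:Phi-2-factorization} by iterating Lemma~\ref{lemma:Phi-2-factorization}, peeling off one knot at a time from the knot decomposition of $P$. The key observation is that if $K$ appears in the knot decomposition $P = K_1 \cup \cdots \cup K_m$, then by the definition of knot decomposition (Definition~\ref{def:knot-decomposition}), $K$ is well separated from the union of all the other knots appearing after it, and similarly all the knots before it are well separated from $K$ together with everything after. So I would first relabel so that $K = K_j$ for the appropriate $j$, and write $P = (K_1 \cup \cdots \cup K_{j-1}) \cup K_j \cup (K_{j+1} \cup \cdots \cup K_m)$.

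First I would handle the part of the decomposition that comes \emph{after} $K$. Set $P' := K_j \cup K_{j+1} \cup \cdots \cup K_m$ and $P'' := K_{j+1} \cup \cdots \cup K_m$. Since $K_j$ is well separated from $P''$ by a good rectangle, and $P_0 \sse K_j$, the second identity of Lemma~\ref{lemma:Phi-2-factorization} gives $\Phi^{(2)}_{P_0}(P') = \Phi^{(2)}_{P_0}(K_j) \Phi^{(2)}(P'')$. Next I would handle the knots $K_1, \ldots, K_{j-1}$ that come before $K$: for each $i$ from $1$ to $j-1$, the knot $K_i$ is well separated from $K_{i+1} \cup \cdots \cup K_m$ by a good rectangle, and since $P_0 \sse K_j \sse K_{i+1} \cup \cdots \cup K_m$, the relevant case of Lemma~\ref{lemma:Phi-2-factorization} (with $P_1 = K_i$, $P_2 = K_{i+1} \cup \cdots \cup K_m \supseteq P_0$) peels off a factor $\Phi^{(2)}(K_i)$. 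Iterating this from $i = 1$ upward strips the first $j-1$ knots and leaves $\Phi^{(2)}_{P_0}(P) = \Phi^{(2)}(K_1) \cdots \Phi^{(2)}(K_{j-1}) \Phi^{(2)}_{P_0}(K_j) \Phi^{(2)}(K_{j+1}) \cdots \Phi^{(2)}(K_m)$. Finally, regrouping the factors other than $\Phi^{(2)}_{P_0}(K_j)$ — namely $\prod_{i \neq j} \Phi^{(2)}(K_i)$ — and applying the first identity of Lemma~\ref{lemma:Phi-2-factorization} in reverse (reassembling $P - K = \bigcup_{i \neq j} K_i$, using well-separatedness along the decomposition order) yields $\prod_{i \neq j} \Phi^{(2)}(K_i) = \Phi^{(2)}(P - K)$, giving the claim.

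The main subtlety — not really an obstacle, but the point requiring care — is checking at each peeling step that the hypothesis ``$P_0$ is contained in one of the two well-separated pieces'' holds, so that the $P_0$-subscripted version of Lemma~\ref{lemma:Phi-2-factorization} applies rather than the plain version. This is automatic because $P_0 \sse K = K_j$ and, at every step where we separate along the decomposition order, the piece containing $K_j$ is always the ``second'' piece $K_{i+1} \cup \cdots \cup K_m$ (for the before-knots) and the ``first'' piece $K_j$ (for the after-knots), both of which contain $P_0$. One also needs to note that the knot decomposition of $P - K$ is inherited from that of $P$ with $K_j$ removed, keeping the same separating rectangles — this is immediate from the order structure in Definition~\ref{def:knot-decomposition}, since deleting $K_j$ from the chain $K_1, \ldots, K_m$ preserves the property that each remaining $K_i$ is well separated from the union of the ones after it (well-separatedness of $K_i$ from a set implies well-separatedness from any subset, which follows directly from Definition~\ref{def:well-separated}). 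Since the corollary statement already says ``the proof is omitted,'' only this outline is needed.
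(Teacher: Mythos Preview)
Your proposal is correct and follows exactly the approach the paper indicates (repeated applications of Lemma~\ref{lemma:Phi-2-factorization}); the paper itself omits the proof. One small nit: you do not need the claim that the \emph{official} knot decomposition of $P - K$ is the inherited one---for the reassembly step you only need the well-separatedness chain $K_i$ from $\bigcup_{l > i,\, l \neq j} K_l$, which you correctly verify via monotonicity of Definition~\ref{def:well-separated}.
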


In the usual Peierls argument for long range order of the Ising model at low temperatures, one shows that the presence of a large contour is exponentially unlikely in the length of the contour. The following two lemmas combine to give the analogous statement for our setting.

\begin{lemma}\label{lemma:knot-appear-prob-bound-by-Phi}
Let $P_0 \sse \plaquettes$. Let $K \sse \plaquettes$ be a knot such that $K \supseteq P_0$. Let $\Psi_1, \Psi_2 \stackrel{i.i.d.}{\sim} \nu_{\lbox, \beta}$, and let $F_K$ be the event that $K$ appears in the knot decomposition of $\supp_{P_0}(\Psi_1, \Psi_2)$. Then $\p(F_K) \leq \Phi^{(2)}_{P_0}(K)$.
\end{lemma}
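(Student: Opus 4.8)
The plan is to bound $\p(F_K)$ by writing it as a sum over homomorphism pairs and recognizing the resulting sum as $\Phi^{(2)}_{P_0}(K)$ times a conditional partition function that is at most $1$. First I would note that $\Psi_1, \Psi_2 \stackrel{i.i.d.}{\sim} \nu_{\lbox, \beta}$ means that
\[ \p((\Psi_1, \Psi_2) = (\psi_1, \psi_2)) = (Z^\nu_{\lbox, \beta})^{-2} \prod_{p \in \plaquettes} \varphi_\beta(\psi_1(\xi_p)) \varphi_\beta(\psi_2(\xi_p)), \]
so that
\[ \p(F_K) = (Z^\nu_{\lbox, \beta})^{-2} \sum_{\substack{(\psi_1, \psi_2) : K \text{ appears in the knot} \\ \text{decomposition of } \supp_{P_0}(\psi_1, \psi_2)}} \prod_{p \in \plaquettes} \varphi_\beta(\psi_1(\xi_p)) \varphi_\beta(\psi_2(\xi_p)). \]
I would then organize this sum according to the value $P := \supp_{P_0}(\psi_1, \psi_2)$, so that $\p(F_K) = (Z^\nu_{\lbox, \beta})^{-2} \sum_{P : K \text{ appears in knot decomp.\ of } P} \Phi^{(2)}_{P_0}(P)$.

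The key step is then to apply Corollary \ref{cor:Phi-2-factorization}: for each such $P$, since $K \supseteq P_0$ and $K$ appears in the knot decomposition of $P$, we have $\Phi^{(2)}_{P_0}(P) = \Phi^{(2)}_{P_0}(K) \Phi^{(2)}(P - K)$. Summing over all admissible $P$, I would write $P = K \cup P'$ where $P'$ ranges over plaquette sets disjoint from $K$ (or more carefully, over all possible complements; the point is that $P' = P - K$ determines $P$), obtaining
\[ \p(F_K) \leq (Z^\nu_{\lbox, \beta})^{-2} \Phi^{(2)}_{P_0}(K) \sum_{P' \sse \plaquettes} \Phi^{(2)}(P'). \]
Here I have dropped the constraint linking $P'$ to a valid knot decomposition, which only enlarges the sum. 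Finally, I would observe that $\sum_{P' \sse \plaquettes} \Phi^{(2)}(P') = \sum_{P'} \sum_{(\psi_1, \psi_2) : \supp(\psi_1) \cup \supp(\psi_2) = P'} \prod_p \varphi_\beta(\psi_1(\xi_p)) \varphi_\beta(\psi_2(\xi_p)) = \sum_{(\psi_1, \psi_2) \in \homspace^2} \prod_p \varphi_\beta(\psi_1(\xi_p)) \varphi_\beta(\psi_2(\xi_p)) = (Z^\nu_{\lbox, \beta})^2$, since every pair $(\psi_1, \psi_2)$ has exactly one support union $P'$. This gives $\p(F_K) \leq \Phi^{(2)}_{P_0}(K)$, as desired.

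The main obstacle I anticipate is bookkeeping around the subscript $P_0$ and making sure Corollary \ref{cor:Phi-2-factorization} applies uniformly: one must check that whenever $K$ appears in the knot decomposition of $P = \supp_{P_0}(\psi_1, \psi_2)$, the hypotheses $K \supseteq P_0$ and ``$K$ appears in the knot decomposition of $P$'' are literally the ones in the corollary, and that the map $P \mapsto (K, P - K)$ is injective on the relevant family so the sum over $P$ really does factor into $\Phi^{(2)}_{P_0}(K)$ times a sum bounded by $\sum_{P'} \Phi^{(2)}(P')$. Everything else is a routine rewriting of probabilities in terms of the partition function $Z^\nu_{\lbox, \beta}$ and the observation that $\Phi^{(2)}$ summed over all plaquette sets recovers $(Z^\nu_{\lbox, \beta})^2$.
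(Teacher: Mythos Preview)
Your proof is correct and follows essentially the same route as the paper: express $\p(F_K)$ as a sum over pairs grouped by their joint support $P$, apply Corollary~\ref{cor:Phi-2-factorization} to factor $\Phi^{(2)}_{P_0}(P) = \Phi^{(2)}_{P_0}(K)\Phi^{(2)}(P-K)$, drop the constraint on $P-K$, and cancel $\sum_{P'}\Phi^{(2)}(P') = (Z^\nu_{\lbox,\beta})^2$ against the normalization. The only cosmetic difference is that the paper phrases the argument as a numerator--denominator ratio rather than carrying the factor $(Z^\nu_{\lbox,\beta})^{-2}$ explicitly.
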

\begin{proof}
For notational brevity, given $P \sse \plaquettes$, let $K \in P$ to mean that $K$ is in the knot decomposition of $P$. We have that
\[ \p(F_K) = \frac{\sum_{\psi_1, \psi_2 \in \homspace} \ind(K \in \supp_{P_0}(\psi_1, \psi_2)) \prod_{p \in  \plaquettes} \varphi_\beta(\psi_1(\xi_p)) \varphi_\beta(\psi_2(\xi_p))}{\sum_{\psi_1, \psi_2 \in \homspace} \prod_{p \in  \plaquettes} \varphi_\beta(\psi_1(\xi_p)) \varphi_\beta(\psi_2(\xi_p))}. \]
Observe that the denominator is equal to $\sum_{P \sse \plaquettes} \Phi^{(2)}(P)$, while the numerator is equal to
\[ \sum_{\substack{P \sse \plaquettes \\ K \in P}} \Phi^{(2)}_{P_0}(P) = \Phi^{(2)}_{P_0}(K) \sum_{\substack{P \sse \plaquettes \\ K \in P}} \Phi^{(2)}(P - K), \]
where we have applied Corollary \ref{cor:Phi-2-factorization}. We may further bound the right hand side above by
\[ \Phi^{(2)}_{P_0}(K) \sum_{P \sse \plaquettes} \Phi^{(2)}(P) . \]
The desired result now follows by combining the previous observations.
\end{proof}

\begin{lemma}\label{lemma:Phi-bound}
Let $P_0, P \sse \plaquettes$. Then
\[ \Phi^{(2)}_{P_0}(P) \leq 4^{|P|} |G|^{2|P|} e^{-\beta \Delta_G |P  - P_0|}.\]
\end{lemma}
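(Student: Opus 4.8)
The plan is to bound $\Phi^{(2)}_{P_0}(P)$ by first summing over the choice of homomorphisms $\psi_1,\psi_2$ with prescribed supports, then estimating the weight $\prod_p \varphi_\beta(\psi_i(\xi_p))$.

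First I would observe that if $\supp_{P_0}(\psi_1,\psi_2) = P$, then necessarily $\supp(\psi_1) \cup \supp(\psi_2) \subseteq P$, so in particular $\supp(\psi_i) \subseteq P$ for $i=1,2$. Hence
\[ \Phi^{(2)}_{P_0}(P) \leq \sum_{\substack{Q_1, Q_2 \sse P}} \ \sum_{\substack{\psi_1 : \supp(\psi_1) = Q_1 \\ \psi_2 : \supp(\psi_2) = Q_2}} \prod_{p \in \plaquettes} \varphi_\beta(\psi_1(\xi_p)) \varphi_\beta(\psi_2(\xi_p)). \]
By Lemma \ref{lemma:homomorphism-number-bound}, the number of $\psi_i$ with $\supp(\psi_i) = Q_i$ is at most $|G|^{|Q_i|} \leq |G|^{|P|}$. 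Next, for such a $\psi_i$, we have $\varphi_\beta(\psi_i(\xi_p)) = 1$ whenever $p \notin \supp(\psi_i)$ (since then $\psi_i(\xi_p) = \groupid$ and $\Re(\chi(1) - \chi(1)) = 0$), while for $p \in \supp(\psi_i)$ we have $\psi_i(\xi_p) \neq \groupid$, so $\Re(\chi(1) - \chi(\psi_i(\xi_p))) \geq \Delta_G$ by the definition of $\Delta_G$, giving $\varphi_\beta(\psi_i(\xi_p)) \leq e^{-\beta \Delta_G}$. Therefore $\prod_{p} \varphi_\beta(\psi_i(\xi_p)) \leq e^{-\beta \Delta_G |\supp(\psi_i)|} = e^{-\beta \Delta_G |Q_i|}$, and so the inner sum is at most $|G|^{|Q_1|} |G|^{|Q_2|} e^{-\beta \Delta_G (|Q_1| + |Q_2|)}$.

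Now I would handle the sum over $Q_1, Q_2 \sse P$. The key point is that, since $\supp_{P_0}(\psi_1,\psi_2) = Q_1 \cup Q_2 \cup P_0 = P$, we must have $Q_1 \cup Q_2 \supseteq P - P_0$, hence $|Q_1| + |Q_2| \geq |Q_1 \cup Q_2| \geq |P - P_0|$. Combined with the bound above (and using $|G|^{|Q_1| + |Q_2|} \le |G|^{2|P|}$... actually more carefully: $|G|^{|Q_i|} \le |G|^{|P|}$ so the product is $\le |G|^{2|P|}$, and $e^{-\beta\Delta_G(|Q_1|+|Q_2|)} \le e^{-\beta\Delta_G|P-P_0|}$ since $\varphi_\beta \le 1$ means each factor only helps), each term of the sum over $(Q_1,Q_2)$ is at most $|G|^{2|P|} e^{-\beta \Delta_G |P - P_0|}$. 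Finally, the number of pairs $(Q_1, Q_2)$ with $Q_1, Q_2 \sse P$ is $(2^{|P|})^2 = 4^{|P|}$. Multiplying gives $\Phi^{(2)}_{P_0}(P) \leq 4^{|P|} |G|^{2|P|} e^{-\beta \Delta_G |P - P_0|}$, as claimed.

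The only mild subtlety — and the step I would be most careful about — is correctly tracking which subsets of $P$ the supports range over and ensuring the exponential factor is governed by $|P - P_0|$ rather than $|P|$; this is where the hypothesis $\supp_{P_0}(\psi_1,\psi_2)=P$ (as opposed to just $\subseteq P$) gets used, through the lower bound $|Q_1 \cup Q_2| \ge |P - P_0|$. Everything else is a direct application of Lemma \ref{lemma:homomorphism-number-bound} and the definition of $\Delta_G$ together with crude counting, so no genuine obstacle is expected.
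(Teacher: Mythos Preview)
Your proof is correct and follows essentially the same approach as the paper's: split according to the pair of supports $(Q_1,Q_2)\subseteq P$ (at most $4^{|P|}$ choices), invoke Lemma~\ref{lemma:homomorphism-number-bound} for the count $|G|^{|Q_i|}\le |G|^{|P|}$, and use $\Delta_G$ to bound the weight. The only cosmetic difference is that the paper bounds the product $\varphi_\beta(\psi_1(\xi_p))\varphi_\beta(\psi_2(\xi_p))\le e^{-\beta\Delta_G}$ directly for each $p\in P\setminus P_0$, whereas you bound each factor separately and then use $|Q_1|+|Q_2|\ge |P\setminus P_0|$; both routes yield the same estimate.
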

\begin{proof}
In order for $\psi_1, \psi_2 \in \homspace$ to be such that $\supp_{P_0}(\psi_1, \psi_2) = P$, we must have that $\supp(\psi_1), \supp(\psi_2) \sse P$. Thus there are at most $4^{|P|} = 2^{|P|} 2^{|P|}$ possible choices of $(\supp(\psi_1), \supp(\psi_2))$. Now fix $S_1, S_2 \sse P$ (such that $S_1 \cup S_2 \cup {P_0} = P$). It remains to show that
\[\begin{split}
\sum_{\substack{\psi_1, \psi_2 \in \homspace \\ \supp(\psi_i) = S_i, i = 1,2}} \prod_{p \in P} \varphi_\beta(\psi_1(\xi_p))& \varphi_\beta(\psi_2(\xi_p)) \leq \\
&|G|^{2|P|} e^{-\beta \Delta_G |P  - P_0|}. 
\end{split}\]
First, note that for any $\psi_1, \psi_2 \in \homspace$ such that $\supp(\psi_i) = S_i$ for $i = 1, 2$, we have that for any $p \in (S_1 \cup S_2) - P_0 = P - P_0$,
\[ \varphi_\beta(\psi_1(\xi_p))  \varphi_\beta(\psi_2(\xi_p)) \leq e^{-\beta \Delta_G}. \]
It thus follows that
\[ \prod_{p \in P} \varphi_\beta(\psi_1(\xi_p)) \varphi_\beta(\psi_2(\xi_p)) \leq e^{-\beta \Delta_G |P  - P_0|}. \]
To finish, note that by Lemma \ref{lemma:homomorphism-number-bound}, for $i = 1, 2$, the number of homomorphisms $\psi \in \homspace$ such that $\supp(\psi) = S_i$ is at most $|G|^{|S_i|} \leq |G|^{|P|}$.
\end{proof}

The following lemma shows that if a knot $K$ contains plaquettes which are very far apart from each other, then the knot itself must be large (intuitively, this is true because knots are in some sense ``connected"). This is the last ingredient needed for our Peierls argument.

\begin{lemma}\label{lemma:knot-size-lower-bound}
Let $L \geq 0$. Let $P_1, P_2 \sse \plaquettes$ be plaquette sets such that the $\ell^\infty$ distance between any vertex of $P_1$ and any vertex of $P_2$ is at least $L$.  
If $K \sse \plaquettes$ is a knot such that $K \supseteq P_1 \cup P_2$, then $|K| \geq |P_1| + |P_2| + L - 1$.
\end{lemma}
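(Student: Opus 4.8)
The plan is to reduce everything to producing, inside $K$, at least $L-1$ plaquettes lying in neither $P_1$ nor $P_2$. Since $P_1$ and $P_2$ are disjoint whenever $L\ge 1$ (a common plaquette would have a vertex at $\ell^\infty$-distance $0$ from itself, violating the hypothesis), and since $K\supseteq P_1\cup P_2$, this immediately yields $|K|\ge |P_1|+|P_2|+(L-1)$; the cases $L\le 1$ are trivial because $K\supseteq P_1\cup P_2$. So the whole content is to find $L-1$ ``fresh'' plaquettes in $K$.

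The only property of knots I would use is the following consequence of the maximality in the definition of the knot decomposition. Fix a coordinate $i\in[4]$ and an integer $k$ such that $B:=\{x:x_i\le k\}\cap\lbox$ is a good rectangle (this holds by Lemma \ref{lemma:good-rectangle-examples}). \textbf{Claim:} if $K$ contains some plaquette all of whose vertices have $i$-th coordinate $\le k$ and some plaquette having a vertex with $i$-th coordinate $\ge k+1$, then $K$ contains a plaquette lying in the hyperplane $\{x_i=k\}$. Indeed, if not, then $K$ contains no plaquette of $\partial S_2(B)$ (those are exactly the plaquettes contained in $\{x_i=k\}$), and then $K=(K\cap S_2(B))\sqcup(K\setminus S_2(B))$ is a splitting of $K$ by the good rectangle $B$ with both parts nonempty, contradicting the fact that knots cannot be split. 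Applying this with $k$ strictly between two attained coordinate values also shows that for each $i$ the set of $i$-th coordinates of vertices of plaquettes of $K$ is an interval.

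The heart of the proof is to choose a coordinate $i$, a vertex $x$ of $P_1$, and a vertex $y$ of $P_2$ with $|x_i-y_i|\ge L$, say $x_i<y_i$, such that no vertex of $P_1\cup P_2$ has $i$-th coordinate strictly between $x_i$ and $y_i$. Granting such a choice: for each of the $\ge L-1$ integers $k$ with $x_i<k<y_i$, the plaquette of $P_1$ through $x$ has all vertices with $i$-th coordinate $\le x_i+1\le k$, while the plaquette of $P_2$ through $y$ has a vertex with $i$-th coordinate $y_i\ge k+1$; the Claim then gives a plaquette $q_k\subseteq\{x_i=k\}$ in $K$. Since every vertex of $q_k$ has $i$-th coordinate $k$, which is not the $i$-th coordinate of any vertex of $P_1\cup P_2$, we get $q_k\notin P_1\cup P_2$; and the $q_k$ are distinct, lying in distinct hyperplanes. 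This produces the desired $L-1$ fresh plaquettes.

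The main obstacle is producing the triple $(x,y,i)$. When $P_1,P_2$ are rectangles --- which is all that is required for the proof of Proposition \ref{prop:percolation-probability-bound}, where $P_1=B_1$ and $P_2=B_2$ --- this is immediate: the $\ell^\infty$-distance of two axis-parallel boxes is attained in a single coordinate $i$, and in that coordinate the coordinate-projections of $P_1$ and of $P_2$ are disjoint intervals separated by a gap of $\ge L-1$ integers that contains no vertex-coordinate of either box, so one takes $x,y$ to be the vertices bounding that gap. For general $P_1,P_2$ the separation may be ``diagonal'' --- no single coordinate exhibits the whole distance $L$, as one already sees when $P_1,P_2$ occupy opposite corners of a common box --- and then one cannot simply read off the triple. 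The remedy I would pursue is to additionally use that a knot is connected as a $2$-complex (once more a consequence of unsplittability), take a path of plaquettes in $K$ from $P_1$ to $P_2$, and count the $\ell^\infty$-level hypersurfaces it must cross; ensuring this bookkeeping produces \emph{exactly} $L-1$ plaquettes outside $P_1\cup P_2$, rather than a smaller constant, is the delicate point, and is where I expect the genuine difficulty of the lemma to lie.
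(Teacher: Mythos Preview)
Your approach is the paper's: pick a coordinate $i$ and, for each integer $k$ in a gap interval, use the half-space good rectangle $\{x_i\le k\}$ (Lemma~\ref{lemma:good-rectangle-examples}) to force $K$ to meet the hyperplane slice $\partial S_2(R_k)=\{x_i=k\}$, producing $L-1$ distinct plaquettes outside $P_1\cup P_2$. The paper's proof differs from yours only in that it opens with the bare assertion ``By assumption, there is some coordinate direction $i$ such that the distance between $P_1$ and $P_2$ in the $i$th coordinate is at least $L$,'' and proceeds exactly as you do from there.

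You are right that this single-coordinate separation is not a consequence of the hypothesis for \emph{arbitrary} plaquette sets $P_1,P_2$: your ``diagonal'' configuration (two-plaquette sets at opposite corners of a large square) is a genuine counterexample to the paper's opening sentence. The paper does not address this; it simply uses the claim. Since the lemma is only invoked in Proposition~\ref{prop:percolation-probability-bound} with $P_i$ the plaquette set of a rectangle $B_i$, where the single separating coordinate is immediate, your argument and the paper's agree on the case that matters, and your caution about the general statement is warranted rather than a defect.

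One correction to your sketch for the general case: knots in this paper are \emph{not} connected as $2$-complexes. Unsplittability by good rectangles is far weaker than edge- or vertex-connectivity of the plaquette set (cf.\ the vortex/graph hierarchy in the appendix: a knot can consist of several vortices that merely sit inside each other's bounding cubes). So the ``path of plaquettes in $K$ from $P_1$ to $P_2$'' you propose need not exist, and that remedy does not go through as stated.
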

\begin{proof}
By assumption, there is some coordinate direction $i \in [4]$ such that the distance between $P_1$ and $P_2$ in the $i$th coordinate is at least $L$. Thus (without loss of generality) we can assume that there are integers $m_1, m_2 \in \Z$ such that $m_2 - m_1 \geq L$, and for all vertices $x$ of a plaquette in $P_1$, and all vertices $y$ of a plaquette in $P_2$, we have that $x_i \leq m_1$, $y_i \geq m_2$. For $m_1 + 1 \leq k \leq m_2  - 1$, let $R_k$ be the rectangle in  $\lbox$ defined by taking all vertices $x \in \vertices$ such that $x_i \leq k$. Observe that for all $k$, $\ptl S_2(R_k)$ is the set of plaquettes $p \in \plaquettes$ such that all vertices $x$ of $p$ have $x_i = k$. Thus the plaquette sets $(\ptl S_2(R_k), m_1 + 1 \leq k \leq m_2 - 1)$ are mutually disjoint, and moreover $\ptl S_2(R_k)$ is disjoint from $P_1 \cup P_2$ for all $m_1 + 1 \leq k \leq m_2 - 1$. By Lemma \ref{lemma:good-rectangle-examples}, $R_k$ is a good rectangle, and thus by the definition of knot decomposition, we must have that for all $m_1 + 1 \leq k \leq m_2 - 1$, $K \cap \ptl S_2(R_k) \neq \varnothing$. Combining these observations with the assumption that $K \supseteq P_1 \cup P_2$, we obtain that $|K| \geq |P_1| + |P_2| + (m_2 - m_1 - 1 ) \geq |P_1| + |P_2| + L - 1$, as desired.
\end{proof}

We finally have enough to prove Proposition \ref{prop:percolation-probability-bound}. 

\begin{proof}[Proof of Proposition \ref{prop:percolation-probability-bound}]
Let $m_0 := |B_1| + |B_2|$. In order for $(\Psi_1, \Psi_2) \notin E(B_1, B_2)$, there must be a knot $K$ containing the plaquettes of $B_1, B_2$ such that the event $F_K$ occurs. Thus by a union bound and Lemma \ref{lemma:knot-size-lower-bound}, we have that
\[ \p((\Psi_1, \Psi_2) \notin E(B_1, B_2)) \leq \sum_{m=m_0 + L-1}^\infty \sum_{\substack{{\text{$K$ a knot of size $m$}} \\ K \supseteq B_1 \cup B_2}} \p(F_K) .\]
Now combining Lemmas \ref{lemma:knot-number-bound}, \ref{lemma:knot-appear-prob-bound-by-Phi}, and \ref{lemma:Phi-bound}, we further obtain (using the assumption that $\beta$ is large enough to ensure that the geometric series is summable)
\begin{align*}
\p((\Psi_1, \Psi_2) \notin E(B_1, B_2)) &\leq \sum_{m=m_0 + L-1}^\infty (4 \cdot 10^{24} |G|^2)^m e^{-\beta \Delta_G (m - m_0)} \\
&= (4 \cdot 10^{24} |G|^2)^{m_0} \frac{(4 \cdot 10^{24} |G|^2 e^{-\beta \Delta_G})^{L-1}}{1 - 4 \cdot 10^{24} |G|^2 e^{-\beta \Delta_G}}.
\end{align*}
To finish, note that the assumption on $\beta$ implies that
\[ 4 \cdot 10^{24} |G|^2 e^{-(\beta / 2)\Delta_G} \leq 1,\]
and thus we obtain 
\[ \p((\Psi_1, \Psi_2) \notin E(B_1, B_2)) \leq 2 (4 \cdot 10^{24} |G|^2)^{m_0} e^{-(\beta / 2) \Delta_G (L-1)}, \]
as desired.
\end{proof}

\begin{proof}[Proof of Proposition \ref{prop:existence-of-swapping-map}]
This is a direct consequence of Corollary \ref{cor:swapping-map} and Proposition \ref{prop:percolation-probability-bound}.
\end{proof}

\appendix 

\numberwithin{theorem}{section}

\section{Topological facts}

\subsection{Edge configurations and homomorphisms}\label{appendix:edge-configurations-and-homomorphisms}

First, we recall an explicit set of generators of $\pi_1(\oneskel, x_0)$ (for a reference, see e.g. \cite[Section 2.1.7]{STILL1993}). Fix a spanning tree $T$\label{notation:T} of $\oneskel$. For any vertex $x \in \vertices$, let $\upath_x$ denote the unique path in $T$ from $x_0$ to $x$. For any edge $e = (x, y) \in \oneskel$, let $a_e$ be the closed loop obtained by starting at $x_0$, following $\upath_x$ to $x$, then traversing $e = (x, y)$, then following the path $\upath_y$ in reverse, from $y$ to $x_0$. Symbolically, we write
\[\label{notation:a-e} a_e = \upath_x e \upath_y^{-1}. \]
(Note if $e$ is in the spanning tree $T$, and $x$ is closer than $y$ to $x_0$ (in the distance induced by $T$), then $a_e = w_y w_y^{-1}$ is the path which starts at $x_0$, follows the path $\upath_y$ to $y$, and then retraces its steps, following the path $\upath_y$ in reverse, from $y$ to $x_0$. Thus in this case $a_e$ is equivalent to the trivial path.) Then for any closed loop $\wloop = e_1 \cdots e_n$ in $\oneskel$ starting and ending at $x_0$, we have that $\wloop$ is equivalent to $a_{e_1} \cdots a_{e_n}$. Thus if $[a_e] \in \pi_1(\oneskel, x_0)$ denotes the equivalence class containing $a_e$, we have that $\{[a_e], e \in \oneskel - T\}$ is a generating set for $\pi_1(\oneskel, x_0)$.

\begin{lemma}\label{lemma:induced-homomorphism-onto}
For any $\psi \in \homspace$, there exists $\sigma \in G^\edges$ such that $\psi_{x_0}(\sigma) = \psi$.
\end{lemma}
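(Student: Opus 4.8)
The plan is to build $\sigma$ by hand from the spanning tree $T$ and the generating set $\{[a_e] : e \in \oneskel - T\}$ of $\pi_1(\oneskel, x_0)$ recalled above. Given $\psi \in \homspace$, define $\sigma \in G^\edges$ by
\[ \sigma_e := \groupid \text{ for } e \in T, \qquad \sigma_e := \psi([a_e]) \text{ for } e \in \oneskel - T. \]
The claim to prove is then simply that $\psi_{x_0}(\sigma) = \psi$.

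First I would observe that both $\psi_{x_0}(\sigma)$ and $\psi$ are elements of $\homspace$, i.e.\ homomorphisms $\pi_1(\oneskel, x_0) \to G$ — for $\psi_{x_0}(\sigma)$ this is exactly the well-definedness built into Definition \ref{def:psi-x0}. Since $\pi_1(\oneskel, x_0)$ is generated by $\{[a_e] : e \in \oneskel - T\}$, it therefore suffices to check that $(\psi_{x_0}(\sigma))([a_e]) = \psi([a_e])$ for every $e = (x,y) \in \oneskel - T$. To see this, evaluate $\psi_{x_0}(\sigma)$ on the representative loop $a_e = \upath_x\, e\, \upath_y^{-1}$ using the formula in Definition \ref{def:psi-x0}: every edge traversed inside $\upath_x$ or inside $\upath_y^{-1}$ lies in $T$, so each corresponding factor equals $\sigma_f = \groupid$ (or $\sigma_f^{-1} = \groupid$, if that edge is crossed against its orientation), and the single remaining factor is $\sigma_e = \psi([a_e])$. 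Hence $(\psi_{x_0}(\sigma))([a_e]) = \groupid \cdot \psi([a_e]) \cdot \groupid = \psi([a_e])$, which establishes the claim and hence the lemma.

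I do not anticipate any genuine obstacle: the only points requiring a little care are that the product defining $(\psi_{x_0}(\sigma))([a_e])$ is independent of the chosen representative loop (part of Definition \ref{def:psi-x0}), that $\upath_x$ and $\upath_y$ consist only of edges of $T$ (true by the definition of $\upath_x$), and that two homomorphisms out of a group which agree on a generating set are equal (immediate from the homomorphism property). A slicker formulation would be that $\psi_{x_0}$ restricts to a bijection $GF(T) \to \homspace$, but that is the later Lemma \ref{lemma:gauge-fixed-bijection-homomorphism}; here I would just give the explicit construction above, which in fact produces a $\sigma$ lying in $GF(T)$.
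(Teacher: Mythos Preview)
Your proposal is correct and is essentially identical to the paper's own proof: define $\sigma$ to be trivial on $T$ and $\sigma_e := \psi([a_e])$ off $T$, then check $(\psi_{x_0}(\sigma))([a_e]) = \psi([a_e])$ on the generators $\{[a_e] : e \in \oneskel - T\}$. The only difference is that you spell out why the tree-edge factors in $a_e$ contribute $\groupid$, whereas the paper just says ``by construction''.
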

\begin{proof}
Fix a spanning tree $T$ of $\oneskel$. Define $\sigma \in G^\edges$ as follows. For $e \in T$, define $\sigma_e := \groupid$. For $e \in \oneskel - T$, define $\sigma_e := \psi([a_e])$. By construction, we have that for all $e \in \oneskel - T$, $(\psi_{x_0}(\sigma))([a_e]) = \sigma_e = \psi([a_e])$. Since $\{[a_e], e \in \oneskel - T\}$ is a generating set for $\pi_1(\oneskel, x_0)$, it follows that $\psi_{x_0}(\sigma) = \psi$, as desired.
\end{proof}

\begin{lemma}\label{lemma:induced-homomorphism-equal-implies-gauge-equiv}
Let $\sigma, \tau \in G^\edges$. Then $\homsym_{x_0}(\sigma) = \homsym_{x_0}(\tau)$ if and only if there exists a function $h \in G^{\vertices}$ with $h_{x_0} = \groupid$, such that for all edges $e = (x, y) \in \oneskel$, we have 
\[ \sigma_e = h_x \tau_e h_y^{-1}. \]
\end{lemma}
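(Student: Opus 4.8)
The plan is to prove both directions directly from the definition of $\psi_{x_0}$ (Definition \ref{def:psi-x0}) together with the explicit generating set $\{[a_e] : e \in \oneskel - T\}$ for $\pi_1(\oneskel, x_0)$ recalled just above. For the ``if'' direction, suppose $h \in G^{\vertices}$ with $h_{x_0} = \groupid$ and $\sigma_e = h_x \tau_e h_y^{-1}$ for every edge $e = (x,y)$; this relation automatically extends to negatively oriented edges since $\sigma_{(y,x)} = \sigma_e^{-1} = h_y \tau_e^{-1} h_x^{-1} = h_y \tau_{(y,x)} h_x^{-1}$. Then for any closed loop $\wloop = e_1 \cdots e_n$ at $x_0$, writing $e_j = (x_{j-1}, x_j)$ with $x_0 = x_n$, the product $\sigma_{e_1} \cdots \sigma_{e_n}$ telescopes: $\prod_j h_{x_{j-1}} \tau_{e_j} h_{x_j}^{-1} = h_{x_0}\big(\prod_j \tau_{e_j}\big) h_{x_n}^{-1} = \tau_{e_1} \cdots \tau_{e_n}$ because $h_{x_0} = h_{x_n} = \groupid$. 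Hence $(\psi_{x_0}(\sigma))(\xi) = (\psi_{x_0}(\tau))(\xi)$ for every $\xi \in \pi_1(\oneskel, x_0)$, i.e. $\psi_{x_0}(\sigma) = \psi_{x_0}(\tau)$.

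For the ``only if'' direction, assume $\psi_{x_0}(\sigma) = \psi_{x_0}(\tau) =: \psi$. Fix the spanning tree $T$ and define $h \in G^{\vertices}$ by $h_x := \sigma_{\upath_x}$, the holonomy of $\sigma$ along the unique path $\upath_x$ in $T$ from $x_0$ to $x$ (so $h_{x_0} = \groupid$ since $\upath_{x_0}$ is the trivial path). I claim $\sigma_e = h_x \tau_e h_y^{-1}$ for all edges $e = (x,y)$. It suffices to verify this separately for $e \in T$ and for $e \in \oneskel - T$. If $e = (x,y) \in T$, then (up to orientation) one of $\upath_x, \upath_y$ is obtained from the other by appending $e$, so $h_x \sigma_e = h_y$, giving $\sigma_e = h_x h_y^{-1}$; the same computation with $\tau$ gives $\tau_e = (\tau_{\upath_x})(\tau_{\upath_y})^{-1}$, and one checks the two $h$-definitions are consistent — here one must note that $\tau_{\upath_x}$ need not equal $h_x = \sigma_{\upath_x}$ in general, so I should instead argue as follows: $h_x \tau_e h_y^{-1} = \sigma_{\upath_x} \tau_e (\sigma_{\upath_y})^{-1}$, and since $e \in T$ we have $\tau_e = (\tau_{\upath_x})^{-1} \tau_{\upath_y}$... this is getting circular. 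The cleaner route: use the loops $a_e$. For every edge $e = (x,y) \in \oneskel$, the loop $a_e = \upath_x e \upath_y^{-1}$ represents an element of $\pi_1(\oneskel, x_0)$, and applying $\psi_{x_0}$ to $\sigma$ and to $\tau$ along this loop gives
\[ \sigma_{\upath_x} \, \sigma_e \, (\sigma_{\upath_y})^{-1} = \psi([a_e]) = \tau_{\upath_x} \, \tau_e \, (\tau_{\upath_y})^{-1}. \]
Now define $h_x := \sigma_{\upath_x} (\tau_{\upath_x})^{-1}$ for each $x \in \vertices$; then $h_{x_0} = \groupid$, and rearranging the displayed identity yields $\sigma_e = \sigma_{\upath_x} \tau_e (\tau_{\upath_x})^{-1} \cdot \tau_{\upath_x}(\tau_{\upath_y})^{-1}\cdot \,\cdots$; more precisely, from $\sigma_{\upath_x}\sigma_e(\sigma_{\upath_y})^{-1} = \tau_{\upath_x}\tau_e(\tau_{\upath_y})^{-1}$ we get $\sigma_e = (\sigma_{\upath_x})^{-1}\tau_{\upath_x}\,\tau_e\,(\tau_{\upath_y})^{-1}\sigma_{\upath_y}$. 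Redefining $h_x := \sigma_{\upath_x}(\tau_{\upath_x})^{-1}$ (equivalently working with $h_x^{-1} = (\sigma_{\upath_x})^{-1}\tau_{\upath_x}$), we obtain $\sigma_e = h_x^{-1}\tau_e h_y$ for all $e$ — and after replacing $h$ by $h^{-1}$ pointwise (still with value $\groupid$ at $x_0$), this is exactly $\sigma_e = h_x \tau_e h_y^{-1}$ as required.

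The only genuine subtlety — the step I expect to need the most care — is making sure the gauge function $h$ defined via $T$-paths actually satisfies the conjugation relation on \emph{tree} edges as well as non-tree edges, and that I do not accidentally reason circularly. The resolution above handles all edges uniformly by invoking the loops $a_e$ and the hypothesis $\psi_{x_0}(\sigma) = \psi_{x_0}(\tau)$ applied to each $[a_e]$; for $e \in T$ the loop $a_e$ is trivial in $\pi_1$, so the identity reads $h_x^{-1}\tau_e h_y = \sigma_e$ with both sides reducing to consistent tree holonomies, and no contradiction arises. Everything else is a routine telescoping computation, and the extension to negatively oriented edges follows from $\sigma_{(y,x)} := \sigma_e^{-1}$ as noted. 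I would also remark at the end that this lemma, combined with Lemma \ref{lemma:induced-homomorphism-onto}, immediately yields the count $|G|^{|\vertices|-1}$ in Lemma \ref{lemma:psi-many-to-one}, since the fiber $\psi_{x_0}^{-1}(\psi)$ is a torsor over the group $\{h \in G^{\vertices} : h_{x_0} = \groupid\}$ of gauge transformations fixing $x_0$.
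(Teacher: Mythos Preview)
Your argument is correct and follows essentially the same route as the paper: define the gauge function via the tree holonomies $(\sigma_{\upath_x})^{-1}\tau_{\upath_x}$ and verify the relation $\sigma_e = h_x\tau_e h_y^{-1}$ using the loops $a_e$ (your uniform treatment of all edges via $a_e$ is in fact slightly cleaner than the paper's tree/non-tree case split). One algebra slip to clean up: in a non-Abelian group $(\sigma_{\upath_x}(\tau_{\upath_x})^{-1})^{-1} = \tau_{\upath_x}(\sigma_{\upath_x})^{-1}$, not $(\sigma_{\upath_x})^{-1}\tau_{\upath_x}$, so your parenthetical is inconsistent with the preceding definition of $h_x$ --- just set $h_x := (\sigma_{\upath_x})^{-1}\tau_{\upath_x}$ from the start (as the paper does) and the rearranged identity $\sigma_e = h_x\tau_e h_y^{-1}$ drops out directly, with no need for the final $h \mapsto h^{-1}$ swap.
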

\begin{proof}
We prove the nontrivial direction. Suppose that $\psi_{x_0}(\sigma) = \psi_{x_0}(\tau)$. We define $h \in G^{\vertices}$ as follows. Fix a spanning tree $T$ of $\oneskel$. First, as required, $h_{x_0} := \groupid$. Now for any edge $e = (x_0, x) \in T$, define $h_x$ so that
\[ \sigma_e = h_{x_0} \tau_e h_x^{-1},  \]
i.e.
\[ h_x := \sigma_e^{-1} \tau_e. \]
More generally, for any $x \in \vertices$, suppose $\upath_x = e_1 \cdots e_n$ (recall the notation that $\upath_x$ is the unique path in $T$ from $x_0$ to $x$). Define
\[ h_x := (\sigma_{e_1} \cdots \sigma_{e_n})^{-1} \tau_{e_1} \cdots \tau_{e_n}.\]
We now show that $h$ is as required. Fix an edge $e = (x, y)$. Suppose first that $e \in T$. Without loss of generality, suppose that $y$ is further from the root $x_0$ of $T$ than $x$, so that $\upath_y = \upath_x e$. If $\upath_x = e_1 \cdots e_n$, then
\[ h_x \tau_e h_y^{-1} = (\sigma_{e_1} \cdots \sigma_{e_n})^{-1} \tau_{e_1} \cdots \tau_{e_n} \tau_e (\tau_{e_1} \cdots \tau_{e_n} \tau_e)^{-1} \sigma_{e_1} \cdots \sigma_{e_n} \sigma_e = \sigma_e, \]
as desired. Now suppose $e \in \oneskel \backslash T$. Let $\upath_x = e_1 \cdots e_n$, $\upath_y = f_1 \cdots f_m$. Then
\begin{align*} 
h_x \tau_e h_y^{-1} &= (\sigma_{e_1} \cdots \sigma_{e_n})^{-1} \tau_{e_1} \cdots \tau_{e_n} \tau_e (\tau_{f_1} \cdots \tau_{f_m})^{-1} \sigma_{f_1} \cdots \sigma_{f_m} \\
&= (\sigma_{e_1} \cdots \sigma_{e_n})^{-1} (\homsym_{x_0}(\tau))([a_e]) \sigma_{f_1} \cdots \sigma_{f_m}.
\end{align*}
To finish, we want to show
\[(\sigma_{e_1} \cdots \sigma_{e_n})^{-1} (\homsym_{x_0}(\tau))([a_e]) \sigma_{f_1} \cdots \sigma_{f_m} = \sigma_e. \]
If we move all the $\sigma$'s to the right hand side, we see that we need to show
\[ (\homsym_{x_0}(\tau))([a_e]) = (\homsym_{x_0}(\sigma))([a_e]), \]
which is true by assumption.
\end{proof}

\begin{lemma}[Lemma 4.1.3 of \cite{Cao2020}]\label{lemma:different-gauge-transform-implies-different-edge-configuration}
Let $\sigma \in G^\edges$, and $h \in G^{\vertices}$ with $h_{x_0} = \groupid$. Let $\tau \in G^\edges$ be the edge configuration given by $\tau_e := h_x \sigma_e h_y^{-1}$ for each $e = (x, y) \in \edges$. If there exists $x \in \vertices$ such that $h_x \neq \groupid$, then $\sigma \neq \tau$.
\end{lemma}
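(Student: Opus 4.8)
The plan is to argue by contradiction: suppose $\sigma = \tau$. Then for every edge $e = (x, y) \in \edges$ we have $\sigma_e = h_x \sigma_e h_y^{-1}$, which rearranges to $h_y = \sigma_e^{-1} h_x \sigma_e$. In other words, the values of $h$ at the two endpoints of any edge are conjugate in $G$; the same relation holds when $e$ is traversed in the negative direction, using the convention $\sigma_{(y,x)} = \sigma_e^{-1}$.

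Next I would exploit the fact that $\oneskel$ is a connected graph. Given an arbitrary vertex $x \in \vertices$, choose a path $x_0 = z_0, z_1, \ldots, z_k = x$ in $\oneskel$. Propagating the conjugacy relation along the successive edges $(z_j, z_{j+1})$, we conclude that $h_x$ is conjugate to $h_{z_{k-1}}$, which is conjugate to $h_{z_{k-2}}$, and so on, down to $h_{x_0}$. Since $h_{x_0} = \groupid$ by hypothesis, $h_x$ lies in the conjugacy class of the identity, which is $\{\groupid\}$; hence $h_x = \groupid$.

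As $x$ was arbitrary, this gives $h_x = \groupid$ for all $x \in \vertices$, contradicting the assumption that $h_x \neq \groupid$ for some $x$. Therefore $\sigma \neq \tau$. There is no genuinely hard step in this argument; the only point needing a little care is bookkeeping the edge orientations when propagating the conjugacy relation along a path, together with the (routine) observation that $\oneskel$ is connected so that every vertex can be joined to $x_0$.
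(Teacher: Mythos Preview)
Your proof is correct. The paper does not actually supply its own proof of this lemma; it merely cites it as Lemma~4.1.3 of \cite{Cao2020} and moves on. Your argument---deriving the conjugacy relation $h_y = \sigma_e^{-1} h_x \sigma_e$ from $\sigma_e = \tau_e$ and then propagating it along a path in the connected graph $\oneskel$ from $x_0$ to an arbitrary vertex---is clean and complete, and is essentially the natural proof one would give.
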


\begin{proof}[Proof of Lemma \ref{lemma:psi-many-to-one}]
This follows by Lemmas \ref{lemma:induced-homomorphism-onto}, \ref{lemma:induced-homomorphism-equal-implies-gauge-equiv}, and \ref{lemma:different-gauge-transform-implies-different-edge-configuration}.
\end{proof}

\begin{proof}[Proof of Lemma \ref{lemma:gauge-fixed-bijection-homomorphism}]
Let $\psi \in \homspace$. The existence of $\sigma \in GF(T)$ such that $\psi_{x_0}(\sigma) = \psi$ follows from the proof of Lemma \ref{lemma:induced-homomorphism-onto}. For uniqueness, observe that for $e \in \oneskel - T$, we must have $\sigma_e = (\psi_{x_0} \sigma)([a_e]) = \psi([a_e])$. Thus $\psi_{x_0}$ is a bijection. The various properties of $\psi_{x_0}$ follow by Lemma \ref{lemma:psi-conjugacy-preserve}.
\end{proof}

\subsection{Good rectangles}\label{appendix:good-rectangles}

\begin{proof}[Proof of Lemma \ref{lemma:good-rectangle-examples}]
Case (a) is \cite[Lemma 4.1.20]{Cao2020}. For case (b), first note that by \cite[Lemma 4.1.20]{Cao2020}, $S_2(B)$ is simply connected, since $B$ is a rectangle. Now, suppose that the vertices of $B$ are given by the set $\{x \in \vertices : x_i \leq k\}$ for some $i \in [4]$, $k \in \Z$ (the case $x_i \geq k$ follows by the same argument). This implies that $\ptl S_2(B)$ is the 2-complex obtained from the set of plaquettes $p$ whose vertices all lie in the set $\{x \in \vertices : x_i = k\}$. This implies that $S_2^c(B)$ is also a rectangle (whose vertices are $\{x \in \vertices : x_i \geq k\}$), and thus by \cite[Lemma 4.1.20]{Cao2020} $S_2^c(B)$ is simply connected. To see why $\ptl S_2(B)$ is simply connected, note that it essentially is a rectangle in one lower dimension, i.e. it is a 3D rectangle. Following the proof of \cite[Lemma 4.1.20]{Cao2020} in \cite[Appendix B]{Cao2020}, we may attach to $\ptl S_2(B)$ all 3-cells whose boundary plaquettes are all contained in $\ptl S_2(B)$. This operation does not change the fundamental group (see e.g. \cite[Section 4.1.5]{STILL1993}), and the resulting space is a 3D rectangle in $\R^4$, which is simply connected.
\end{proof}

\subsection{Knot upper bound}\label{appendix:knot-upper-bound}

Most of the following discussion is borrowed from \cite[Section 4.4]{Cao2020}.~Indeed, to prove Lemma \ref{lemma:knot-number-bound}, we will show how to deduce it from \cite[Corollary 4.4.8]{Cao2020}.

\begin{defn}[Cf. Definition 3.2.2 of \cite{Cao2020}]
Given a plaquette set $P \sse \plaquettes$, we may obtain an undirected graph $G(P)$ as follows. The vertices of the graph are the plaquettes of $P$. Place an edge between any two plaquettes $p_1, p_2 \in P$ such that there is a 3-cell $c$ in $\lbox$ which contains both $p_1, p_2$. 

A vortex is a set $\vortex \sse \plaquettes$\label{notation:vortex} such that $G(\vortex)$ is connected. For general plaquette sets $P \sse \plaquettes$, we may partition $G(P)$ into connected components $G_1, \ldots, G_k$, which corresponds to a partition of $P$ into vortices $\vortex_1, \ldots, \vortex_k$, such that $G_i = G(\vortex_i)$ for all $1 \leq i \leq k$. Observe that as the partition of an undirected graph into connected components is unique, the resulting partition of $P$ into vortices is also unique. 

Given a plaquette set $P \sse \plaquettes$, let the unique partition of $P$ into compatible vortices $\vortex_1, \ldots, \vortex_k$ as previously described be called the vortex decomposition of $P$. For each $1 \leq i \leq k$, we say that $\vortex_i$ is a vortex of $P$.
\end{defn}

\begin{defn}
Given a plaquette set $P \sse \plaquettes$, define $B(P)$ as a cube of minimal side length in $\lbox$ such that all plaquettes of $P$ are in $S_2(B(P))$, but not in $\partial S_2(B(P))$. If the choice of $B(P)$ is not unique, fix one such cube. For $P, P' \sse \plaquettes$, define the function
\[ J(P, P') := \begin{cases} 1 & P \cap B(P') \neq \varnothing \text{ or } P' \cap B(P) \neq \varnothing \\ 0 & \text{otherwise} \end{cases} .\]
To be clear, we are slightly abusing notation here by writing $P \cap B(P') \neq \varnothing$; what this means is that there is a plaquette $p \in P$ which is contained in $B(P')$. 
\end{defn}

\begin{defn}
We define a hierarchy of undirected graphs $G^\scale(P)$, for integers $\scale \geq 0$. First, to define $G^0(P)$, consider the vortex decomposition $P = \vortex_1 \cup \cdots \cup \vortex_{n_0}$. Define $P^0_i := V_i$, $1 \leq i \leq n_0$. The vertex set of $G^0(P)$ is $\{P^0_1, \ldots, P^0_{n_0}\}$. The edge set is 
\[ \{ \{P^0_i, P^0_j\} : i \neq j, J(P^0_i, P^0_j) = 1\}. \]
Now suppose for some $\scale \geq 0$, $G^\scale(P)$\label{notation:G-scale-P} is defined, with vertices $P^\scale_1, \ldots, P^\scale_{n_\scale} \sse \plaquettes$ which are compatible (and thus disjoint), and such that $P = P^\scale_1 \cup \cdots \cup P^\scale_{n_\scale}$. To define $G^{\scale+1}(P)$, first let $n_{\scale+1}$ be the number of connected components of $G^\scale(P)$, with connected components given by the partition $I_1 \cup \cdots \cup I_{n_{\scale+1}} = [n_\scale]$. For $1 \leq i \leq n_{\scale+1}$, define
\[ P^{\scale+1}_i := \bigcup_{j \in I_i} P^\scale_j. \]
The vertex set of $G^{\scale+1}(P)$ is $\{P^{\scale+1}_1, \ldots, P^{\scale+1}_{n_{\scale+1}}\}$, and the edge set is
\[ \{ \{P^{\scale+1}_i, P^{\scale+1}_j\} : i \neq j, J(P^{\scale+1}_i, P^{\scale+1}_j) = 1\}. \]
Observe that if $\scale \geq 1$ is such that $n_\scale = 1$, then $G^{\scale-1}(P)$ is connected. 
\end{defn}

\begin{defn}
For $P \sse \plaquettes$, define $\scale^*(P) := \min \{ \scale : n_\scale = 1\}$. If $n_\scale > 1$ for all $\scale$, define $\scale^*(P) = \infty$. Let $\mc{D}$ be the collection of $P \sse \plaquettes$ such that $\scale^*(P) < \infty$, and such that for all $\scale \leq \scale^*(P)$, any vertex of $G^\scale(K)$ is of size at least $2^\scale$. Observe that if $P \in \mc{D}$ and $|P| = m$, then $\scale^*(P) \leq \floor{\log_2 m}$, and consequently $G^{\floor{\log_2 m} - 1}(P)$ is connected. Now define
\[\label{notation:A-m-scale} \mc{A}(m, \scale) := \{ P \in \mc{D} : \abs{P} = m, G^\scale(P) \text{ is connected}\}. \]
By the previous observation, note if $\scale \geq \floor{\log_2 m} - 1$, then $\mc{A}(m, \scale) = \mc{A}(m, \floor{\log_2 m} - 1)$.
Now for $p \in \plaquettes$, define
\[\label{notation:A-m-scale-p} \mc{A}(m, \scale, p) := \{P \in \mc{A}(m, \scale) : P \ni p\}. \]
\end{defn}

\begin{lemma}[Corollary 4.4.8 of \cite{Cao2020}]\label{lemma:A-m-s-p-bound}
For all  $m \geq 1$, $0 \leq \scale \leq \floor{\log_2 m} - 1$, $p \in \plaquettes$, we have that
\[ |\mc{A}(m, \scale, p)| \leq (10^{24})^m. \]
\end{lemma}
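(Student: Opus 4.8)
This is precisely Corollary 4.4.8 of \cite{Cao2020}, so one route is simply to quote it; for orientation we sketch the mechanism of its proof. First, since $G^{\scale}(P)$ being connected forces $n_{\scale+1}=1$, and hence $n_{\scale'}=1$ (so $G^{\scale'}(P)$ is connected) for every $\scale'\ge\scale$, the family $\mc{A}(m,\scale,p)$ is nondecreasing in $\scale$; by the observation recorded after the definition of $\mc{A}(m,\scale)$ it is moreover constant for $\scale\ge\ffloor{\log_2 m}-1$. Thus it is enough to bound $|\mc{A}(m,\ffloor{\log_2 m}-1,p)|$, i.e. to count the $P\in\mc{D}$ with $|P|=m$ and $p\in P$.

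The plan is a multiscale ``tree encoding'': reconstruct each such $P$ from (a) its vortex decomposition $P=P^0_1\cup\cdots\cup P^0_{n_0}$, and (b) for each scale $0\le \scale\le \scale^*(P)-1$, a spanning forest of the graph $G^{\scale}(P)$ (which records exactly how the scale-$\scale$ pieces $P^{\scale}_i$ agglomerate into the scale-$(\scale+1)$ pieces). For (a) one uses that the plaquette adjacency graph $G(\plaquettes)$ has bounded degree — each plaquette lies in only finitely many $3$-cells of $\lbox$, each of which has six faces — so the number of $G$-connected plaquette sets of a given size $n$ containing a fixed plaquette is at most $\const_0^{\,n}$ for an absolute constant $\const_0$; anchoring each vortex to a plaquette already located in the partial structure and summing the vortex sizes (which total $m$) costs an overall factor $\const_1^{\,m}$. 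For (b), at scale $\scale$ there are at most $m/2^{\scale}$ pieces, since they are disjoint, have total size $m$, and each has size at least $2^{\scale}$ by the definition of $\mc{D}$; the edge relation of $G^{\scale}(P)$ is governed by the function $J$, i.e. by the bounding boxes $B(P^{\scale}_i)$, whose side lengths are controlled by the sizes (indeed the diameters) of the pieces, so that each piece has only $\exp(O(|P^{\scale}_i|))$ candidate $J$-neighbours; encoding a spanning forest of $G^{\scale}(P)$ therefore costs at most $\exp(O(\sum_i|P^{\scale}_i|))=\exp(O(m))$ at scale $\scale$, the point being that the exponent is charged against the total mass $\sum_i|P^{\scale}_i|\le m$ of the scale-$\scale$ pieces, not against the (potentially much larger) product of their number and their box volumes.

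Multiplying the scale-$\scale$ costs over $\scale=0,1,\dots,\ffloor{\log_2 m}$ and using that the number of pieces, and hence the exponent contributed, at scale $\scale$ is controlled by $m/2^{\scale}$, the per-scale exponents form a convergent geometric series, so the product telescopes to $\const^{c m}$ for absolute constants $\const,c$; running the explicit constant-tracking of \cite{Cao2020} then yields the stated bound $(10^{24})^m$. The crux — and the reason \cite{Cao2020} builds the scale hierarchy together with the size lower bound $|P^{\scale}_i|\ge 2^{\scale}$ at all — lies exactly in the last point of (b): absent such a lower bound one could place many small pieces near one large piece and a naive spanning-forest count would be super-exponential in $m$, whereas the hierarchy forces every branching at scale $\scale$ to be paid for by a comparable amount of mass, which is what makes the final count merely exponential in $m$.
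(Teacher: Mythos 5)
The paper gives no proof of this lemma at all; it is stated as a direct citation of Corollary 4.4.8 of \cite{Cao2020}, which is exactly the route you identify first. Your accompanying sketch of the multiscale tree-encoding behind that corollary — the monotonicity in $\scale$, the bounded-degree count of connected vortices, the spanning-forest encoding of each $G^\scale(P)$, and especially the role of the lower bound $|P^\scale_i|\ge 2^\scale$ in keeping the per-scale cost geometric in $m/2^\scale$ — is an accurate description of the mechanism in \cite{Cao2020}, so the proposal matches the paper's approach and adds correct supplementary explanation.
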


\begin{lemma}\label{lemma:knot-graph-properties}
Let $K \in \mc{K}$, $\scale \geq 0$. If $n_\scale > 1$, then there are no isolated vertices of $G^\scale(K)$. Consequently, every connected component of $G^\scale(K)$ is of size at least $2$, and thus if also $\scale \leq \scale^*(K)$, then for all vertices $P^\scale_i$ of $G^\scale(K)$, we have $|P^\scale_i| \geq 2^\scale$. Consequently, $K \in \mc{D}$.
\end{lemma}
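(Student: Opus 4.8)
The plan is to prove the ``no isolated vertices'' assertion first, and then extract everything else from it by a short induction on the scale $\scale$. The heart of the matter is that an isolated vertex of $G^\scale(K)$ would let one split the knot $K$ into two pieces that are well separated by a good rectangle, contradicting the maximality built into the definition of the knot decomposition in which $K$ appears.

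To prove the first claim I would argue by contradiction: assume $n_\scale > 1$ and that some vertex $P^\scale_i$ of $G^\scale(K)$ is isolated. Set $B := B(P^\scale_i)$, the minimal cube in $\lbox$ with all plaquettes of $P^\scale_i$ in $S_2(B)$ but none in $\partial S_2(B)$. The first sub-step is to rule out $B = \lbox$: if $B = \lbox$, then since $n_\scale > 1$ there is another vertex $P^\scale_j$, and every plaquette of $P^\scale_j$ is contained in $\lbox = B$, so $J(P^\scale_i, P^\scale_j) = 1$, contradicting isolation. Since $\lbox$ is a cube and $B \subsetneq \lbox$ is a sub-cube, $B$ has all side lengths strictly smaller than that of $\lbox$, so Lemma \ref{lemma:good-rectangle-examples}(a) makes $B$ a good rectangle. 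The second sub-step puts $K \setminus P^\scale_i$ on the ``outside'' of $B$: for each $j \neq i$ isolation gives $J(P^\scale_i, P^\scale_j) = 0$, so no plaquette of $P^\scale_j$ is contained in $B$; since every plaquette of $\partial S_2(B)$ is contained in $B$, each plaquette of $P^\scale_j$ therefore lies in $S_2^c(B)$ and not in $\partial S_2(B)$. Together with the defining properties of $B(P^\scale_i)$, this says exactly that $P^\scale_i$ is well separated from the nonempty set $K \setminus P^\scale_i = \bigcup_{j \neq i} P^\scale_j$ by the good rectangle $B$, contradicting the maximality of the knot decomposition. Hence $G^\scale(K)$ has no isolated vertices whenever $n_\scale > 1$.

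Given this, I would conclude as follows. If $n_\scale > 1$ then every connected component of $G^\scale(K)$ has at least two vertices, so $n_{\scale+1} \leq n_\scale / 2$; since $n_\scale \geq 1$ always (as $K$ is nonempty), this drives $n_\scale$ strictly down to $1$ in finitely many steps, giving $\scale^*(K) < \infty$. Then an induction on $\scale$ shows $|P^\scale_i| \geq 2^\scale$ for every vertex $P^\scale_i$ of $G^\scale(K)$ and every $\scale \leq \scale^*(K)$: the base case $\scale = 0$ is immediate since each vortex is nonempty, and for the step, if $\scale + 1 \leq \scale^*(K)$ then $\scale < \scale^*(K)$ forces $n_\scale > 1$, so the first part applies and each $P^{\scale+1}_i = \bigcup_{j \in I_i} P^\scale_j$ is a union of at least two of the pairwise disjoint sets $P^\scale_j$, each of size $\geq 2^\scale$, hence $|P^{\scale+1}_i| \geq 2^{\scale+1}$. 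This verifies both conditions defining $\mc{D}$, so $K \in \mc{D}$.

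The step I expect to require the most care — or at least to be the most error-prone — is the second sub-step above: translating the combinatorial condition $J(P^\scale_i, P^\scale_j) = 0$ into the precise topological statement needed for well-separatedness (namely $K \setminus P^\scale_i \subseteq S_2^c(B)$ with no plaquette in $\partial S_2(B)$), together with excluding $B = \lbox$ so that Lemma \ref{lemma:good-rectangle-examples}(a) actually applies. Everything else is routine once the definitions of $S_2$, $S_2^c$, $\partial S_2$, $B(\cdot)$, and $J$ are unwound.
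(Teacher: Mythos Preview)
Your argument is correct and follows essentially the same route as the paper's proof: both hinge on the observation that if $P^\scale_i$ were isolated then $B(P^\scale_i)$ would be a good cube well separating $P^\scale_i$ from $K \setminus P^\scale_i$, contradicting the maximality of the knot decomposition. You are simply more explicit than the paper about ruling out $B = \lbox$ and about the induction giving $|P^\scale_i| \geq 2^\scale$ and $\scale^*(K) < \infty$, which the paper leaves to the reader.
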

\begin{proof}
Let $P^\scale_i$ be a vertex of $G^\scale(K)$. Since $n_\scale > 1$ by assumption, we have $K - P^\scale_i \neq \varnothing$. Then by the definition of knot decomposition (Definition \ref{def:knot-decomposition}), there cannot exist a cube which well separates $P^\scale_i$ from $K - P^\scale_i$ (note that such a cube would automatically have side length strictly less than the side length of $\lbox$, and thus by Lemma \ref{lemma:good-rectangle-examples}, it would be a good rectangle). Therefore we must have $B(P^\scale_i) \cap (K - P^\scale_i) \neq \varnothing$, and thus $P^\scale_i$ cannot be an isolated vertex in $G^\scale(K)$. 
\end{proof}

\begin{proof}[Proof of Lemma \ref{lemma:knot-number-bound}]
By Lemma \ref{lemma:knot-graph-properties}, for any knot $K \in \mc{K}$, we have that $K \in \mc{D}$. Thus if $|K| = m$, then $K \in \mc{A}(m, \floor{\log_2 m} - 1)$. The desired result now follows by Lemma \ref{lemma:A-m-s-p-bound}.
\end{proof}


\section{Higgs Boson Large \texorpdfstring{$\kappa$}{k}}\label{appendix:higgs-large-kappa}

In this section, we will adapt our swapping argument technique to establish correlation decay for Wilson loop observables for lattice Yang-Mills theories coupled to a Higgs boson. This is the model treated in the work \cite{Adh2021}. 

In said work, the analysis of the Higgs' boson differed in two cases; namely, the cluster expansion techniques used when $\kappa$ was large or when $\kappa$ was small were different. In this section, we discuss briefly the case that $\kappa$ is large. 

The main new term of the Hamiltonian, when coupled to the Higgs' boson, is a term of the form $ \kappa \sum_{e=(x,y) \in \Lambda_1} \text{Re} [\phi_x \text{Tr}[\rho(\sigma_e)] \phi_y^{-1} - \text{Tr}[\rho(1)] ]$. When $\kappa$ is large, this term tends to force $\sigma_e = 1$ as well as $\phi_x =\phi_y$. 

The new difficulty introduced when applying our swapping argument is that we now need to describe how one will swap the Higgs' boson configuration appropriately. On a high level, the appropriate 'clusters' for the Higgs boson are found by locating the boundaries between unequal $\phi_x$ and $\phi_y$. The swapping map we design will preserve said boundaries as appropriate. In what follows, we go into more detail on the construction. 

\subsection{Introduction}

Our configurations consist of two components. The first is the gauge field $\sigma: \Lambda_1 \to G$ with representation $\rho$. (We abuse notation slightly and use $\Lambda_1$ to represent all edges rather than just the positively oriented ones; similarly, $\Lambda_2$ will denote all of the plaquettes, rather than just the positively oriented ones.) We have considered excitations of this field in the previous sections. The second is a Higgs boson field that acts as a map from the vertices $\Lambda_0 \to H$. $H$ is a subgroup of the multiplicative group of the unit circle. The Hamiltonian that we consider is,
\begin{equation}
    H_{N,\beta,\kappa}(\sigma,\phi)= \sum_{p \in \Lambda_2} \beta \text{Tr}[\rho((\td \sigma)_p) - \rho(1)] + \kappa \sum_{e=(x,y) \in \Lambda_1} \text{Re} [\phi_x \text{Tr}[\rho(\sigma_e)] \phi_y^{-1} - \text{Tr}[\rho(1)] ].
\end{equation}

If we consider computing the expectation of Gauge invariant functions, then we can make the following simplification to the gauge group. We let $H_t$ be the set of elements $h_t$ such that there exists some $g \in G$ such that $\rho(g) = h_t I$. Then, by an appropriate simultaneous gauge transformation of $\sigma$ and $\phi$, we may consider the case that the Higgs field is a map from $\Lambda_0 \to H/H_t$. The benefit of this gauge transformation is that if $\phi_x \text{Tr}[\rho(\sigma_e)] \phi_y^{-1} = \text{Tr}[\rho(1)]$, then necessarily $\phi_x = \phi_y$ and $\sigma_e =1$.

As such, we can have the following definition of support of our configuration.
\begin{defn}
Consider a configuration $\calC = (\sigma,\phi)$.
We let $EE$, the set of our excited edges, be defined as,
\begin{equation}
    EE:=\{ e=(x,y) \in \Lambda_1: \phi_x \ne \phi_y \text{ or } \sigma_e\ne 1 \}.
\end{equation}

The definition of the support of our configuration is as follows,
\begin{equation}
    \supp(\calC) = \{ p \in \Lambda_2: \exists e \in EE \text{ s.t. } e \in \delta p \}.
\end{equation}
Namely, a plaquette $p$ is in our support if there exists an edge $e$ in the set of our excited edges $EE$ such that $e$ is in the boundary of $p$.

Given two configurations $\calC_1$ and $\calC_2$, the joint support of $\calC_1$ and $\calC_2$ will be the union of the supports of $\calC_1$ and $\calC_2$.

\begin{equation}
    \supp_P(\calC_1,\calC_2)= \supp(\calC_1) \cup \supp(\calC_2) \cup P.
\end{equation}
\end{defn}

Our goal in this section is to prove the following theorem,
\begin{theorem}
Consider two boxes $B_1$, $B_2$ separated by $\ell^{\infty}$ distance $L$. Additionally, let $f$ and $g$ be functions such that $f$ only depends on the values of $\sigma$ and $\phi$ in the box $B_1$, while $g$ depends only on the values of $\sigma$ and $\phi$ in the box $B_2$.
Furthermore, assume that transformations of the form $\sigma_e \to \eta_x \sigma_e \eta_y^{-1}$ for some auxiliary field $\eta:\Lambda_0 \to G$, do not affect the values of $f$ or $g$. Then, for sufficiently large $\beta$ and $\kappa$, we have that
\begin{equation}
    \mathrm{Cov}(f,g) \le C ||f||_{\infty} ||g||_{\infty}   |G|^{8L}|H/H_t|^{8L} \exp\left[ - \mathfrak{c} \frac{\kappa}{6} \right]^{L - |B_1 \cup B_2|}, 
\end{equation}
where $C$ is a constant that does not depend on $|G|,|H/H_t|,\beta$ or $\kappa$, and $\mathfrak{c}$ is a constant defined in Lemma \ref{lem:HiggsProbBoundLargeKappa}.
\end{theorem}

The statement can easily be generalized to the form phrased in Theorem \ref{thm:main-general} with little difficulty.

\subsection{Construction of the Swapping Map}
We will restrict our analysis to the case that $H/H_t$ is the group $Z_2$. We can call one of the charge assignments $+$ and the other charge assignments $-$. Our assignment of Higgs boson fields would be analogous to the assignment of charges in the Ising model.

Let $B_1$ and $B_2$ be two boxes separated by distance at least $L$. 
Let $\calC_1=(\sigma_1,\phi_1)$ and $\calC_2=(\sigma_2,\phi_2)$ be two configurations of gauge and Higgs boson fields.

Let $\mathcal{V}_1 \cup \mathcal{V}_2 \cup \ldots \cup \mathcal{V}_N$ be the vortex decomposition of $\supp_{B_1 \cup B_2}(\calC_1,\calC_2)$. Assume that $B_1$ and $B_2$ are in different vortices of the vortex decomposition. Assume that $B_1 \in \mathcal{V}_1$ and $B_2 \in \mathcal{V}_2$. We will show that we can construct a swapping map in this case.

Observe from the definition of our support that $\sigma_e=1$ for all edges $e$ that are not boundary plaquettes of some plaquette in $\supp(\calC_1,\calC_2)$. It is very easy to define our exchange map for $\sigma$. We let $\Lambda_1(\mathcal{V}_2)$ be the set of edges that form boundary vertices of plaquettes in $\mathcal{V}_2$. We set $\tilde{\sigma}_1$ to be equal to $\sigma_1$ on  $\Lambda_1(\mathcal{V}_2)^c$ and equal to $\sigma_2$ on $\Lambda_1(\mathcal{V}_2)$. We set $\tilde{\sigma}_2$ to be equal to $\sigma_2$ on $\Lambda_1(\mathcal{V}_2)^c$ and equal to $\sigma_1$ on $\Lambda_1(\mathcal{V}_2)$. 

The difficulty is to assign the Higgs boson field charges. Notice that the component of the support due to the assignment of Higgs boson fields is due to the presence of phase boundaries. These phase boundaries will separate regions of $+$ charge from regions of $-$ charge.

\begin{lemma} \label{lemma:chiconst}
Let $\phi$ be an assignment of Higgs boson charges. Let $PB_1\cup \ldots \cup PB_n$ be some collection of phase boundaries found in $\phi$. There exists some map $\chi:\Lambda_0 \to Z_2$ such that $\phi \chi$ will have all the same phase boundaries of $\phi$ except for the union $PB_1 \cup \ldots \cup PB_N$.
\end{lemma}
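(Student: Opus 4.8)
\emph{Approach.} The plan is to rephrase everything in terms of $Z_2$-valued cochains on $\Lambda$. Regard the Higgs field $\phi : \Lambda_0 \to Z_2$ as a $0$-cochain, and let its \emph{domain wall} be the edge set $D_\phi := \{\, e = (x,y) \in \Lambda_1 : \phi_x \neq \phi_y \,\}$; the phase boundaries of $\phi$ are precisely the connected components of $D_\phi$, where two frustrated edges are adjacent exactly when they lie on a common plaquette (equivalently, their dual $3$-cells share a dual $2$-cell). For a vertex set $S \subseteq \Lambda_0$, write $\nabla S := \{\, e = (x,y) \in \Lambda_1 : \ind(x \in S) \neq \ind(y \in S) \,\}$ and let $\chi : \Lambda_0 \to Z_2$ be given by $\chi_v = -1$ for $v \in S$ and $\chi_v = +1$ otherwise. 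The XOR identity $(\phi_x\phi_y^{-1})(\chi_x\chi_y^{-1}) = -1$ shows that $D_{\phi\chi} = D_\phi \triangle \nabla S$. Hence it is enough to find $S$ with $\nabla S = PB_1 \cup \cdots \cup PB_n =: C$: then, since each $PB_i$ is a full connected component of $D_\phi$ and $C \subseteq D_\phi$, we get $D_{\phi\chi} = D_\phi \setminus C$, whose connected components are exactly the phase boundaries of $\phi$ other than $PB_1, \ldots, PB_n$.

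\emph{Step 1: $C$ is a $Z_2$-cocycle.} First I would check that every plaquette $p \in \Lambda_2$ has an even number of its four boundary edges in $C$. Going around the boundary of $p$, the value of $\phi$ returns to its start, so the number of sign changes is even; that is, $|\partial p \cap D_\phi|$ is even. Moreover, \emph{all} of the frustrated edges on the boundary of $p$ lie in a single phase boundary, since any two of them are adjacent by definition (they share the plaquette $p$). Consequently, for each $p$, the frustrated edges of $\partial p$ are either entirely inside $C$ or entirely disjoint from $C$, so $|\partial p \cap C|$ is either $|\partial p \cap D_\phi|$ or $0$ — even in both cases.

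\emph{Step 2: cocycle implies coboundary, then conclude.} Since $\Lambda$ is a box and therefore simply connected, every $Z_2$-valued edge set that meets the boundary of every plaquette in an even number of edges is of the form $\nabla S$ for some $S \subseteq \Lambda_0$ (concretely, fix a root vertex of $\Lambda$, declare it outside $S$, and propagate membership in $S$ along edges, flipping across an edge precisely when it lies in $C$; the cocycle property of $C$ guarantees consistency around every plaquette, and simple connectivity upgrades this to consistency around every loop). Applying this to $C$ from Step 1 produces the desired $S$, and the associated $\chi$ satisfies $D_{\phi\chi} = D_\phi \setminus C$. Finally, deleting a union of whole connected components from $D_\phi$ does not alter the remaining components, so $\phi\chi$ has exactly the phase boundaries of $\phi$ with $PB_1, \ldots, PB_n$ removed, as claimed.

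\emph{Main obstacle.} There is no deep difficulty here; the only point requiring care is the precise definition of ``phase boundary.'' The argument uses exactly the property that any two frustrated edges sharing a plaquette lie in the same phase boundary, which should be built into the definition (and holds just as well under the dual $3$-cell formulation, since the dual $3$-cells of two edges of $\partial p$ both contain the dual $2$-cell of $p$). Once that is fixed, everything reduces to the standard fact that $Z_2$-cocycles on a simply connected complex are coboundaries, which is already available from the topological facts established in Appendix A (the simple connectivity statements underlying Lemma \ref{lemma:good-rectangle-examples}).
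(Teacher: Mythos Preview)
Your argument is correct and takes a genuinely different route from the paper's own proof. The paper proceeds geometrically and iteratively: each phase boundary is viewed as surrounding an ``island'' of vertices of constant charge, and one removes the $PB_i$ one at a time by flipping the innermost island, then the next, and so on (the paper only spells out the cases $n=1,2$ and leaves the general case to the reader). Your proof instead packages the whole construction into a single cohomological step: you show that the union $C = PB_1 \cup \cdots \cup PB_n$ is a $Z_2$ $1$-cocycle (using that each plaquette's frustrated edges lie in a single component of $D_\phi$), invoke simple connectivity of the box to write $C = \nabla S$, and read off $\chi$ from $S$. This is cleaner and more uniform in $n$, and it makes explicit the topological input (simple connectivity, i.e.\ $H^1(\Lambda;Z_2)=0$) that is only implicit in the paper's island picture. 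The paper's version, on the other hand, is more visual and requires no cochain language; its iterative flipping is essentially an inductive unpacking of your coboundary $S$ into nested symmetric differences. The one point you flag --- that the definition of ``phase boundary'' must group together any two frustrated edges sharing a plaquette --- is indeed what both arguments need, and it matches the dual $3$-cell picture the paper tacitly uses.
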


\begin{proof}
The basic geometry is that one can find islands of different charges with the phase boundaries separating these islands. One can flip the innermost islands and iteratively proceed outwards to get rid of all phase boundaries.

For an example of this construction, consider the case that there is a single phase boundary $PB_1$. This phase boundary surrounds some set of vertices $V$; all of the vertices inside $V$ will have the same Higgs boson charge since there are no other phase boundaries. One can merely flip all of the charges inside $V$ to match the charge outside $V$. 

Now consider the case that we have two phase boundaries, $PB_1$ and $PB_2$. WLOG, one can find two sets of vertices $V_1$ and $V_2$ such that $PB_2$ surrounds $V_2$, $PB_1$ surrounds $V_1$, and $V_2 \subset V_1$. One can assert that all of the charges in $V_2$ are the same while all of the charges of $V_1\setminus V_2$ are the same. To find the map $\chi$, one can first flip the vertices of $V_2$ to match those of $V_1$; this will remove the phase boundary $PB_2$. Afterward, one can flip all of the vertices of $V_1$ to remove the phase boundary $PB_1$. Ultimately, this amounts to fixing the charge assignments of $V_2$ and flipping those of $V_1 \setminus V_2$.

\end{proof}

Let $PB_1^1 \cup PB_2^1 \cup \ldots \cup PB_n^1$ be the phase boundaries of $\calC_1$ found in $\mathcal{V}_1$, and $PB_1^2 \cup PB_2^2 \ldots \cup PB_m^2 $ be the phase boundaries of $\calC_2$ found in $\mathcal{V}_2$. Using Lemma \ref{lemma:chiconst}, one can find some map $\chi_1: \Lambda_0 \to Z_2$ such that the new field $\phi_1 \chi_1$ will have the same phase boundaries as $\phi_1$ outside of the vortex $\mathcal{V}_2$ but will get rid of the phase boundaries $PB_1^1 \cup \ldots PB_n^1$ in $\mathcal{V}_1$.

One can similarly find a map $\chi_2: V_N \to Z_2$ that removes the phase boundaries $PB_1^2\cup \ldots \cup PB_m^2$ and otherwise fixes all other phase boundaries. We define $\tilde{\phi}_1$ to be $\phi_1 \chi_1 \chi_2^{-1}$ and $\tilde{\phi}_1 = \phi_2 \chi_2 \chi_1^{-1}$. We see that $\tilde{\phi}_1$ will have the same phase boundary as $\phi_1$ outside of $\mathcal{V}_2$ and will have the phase boundaries $PB_1^2 \cup \ldots \cup PB_m^2$ inside of $\mathcal{V}_1$. $\tilde{\phi}_2$ will have the phase boundary as $\phi_2$ outside of $\mathcal{V}_2$ and the phase boundary $PB_1^1\cup \ldots \cup PB_n^1$ inside of $\mathcal{V}_2$.

Our swapping map sends $\calC_1,\calC_2 \to (\tilde{\calC}_1,\tilde{\calC}_2)$ where $\calC_1= (\tilde{\sigma}_1,\tilde{\phi}_1)$ and $\calC_2= (\tilde{\sigma}_2,\tilde{\phi}_2).$

\begin{lemma}

Under the construction above we have,
$$
\supp_{B_1 \cup B_2}(\calC_1,\calC_2) = \supp_{B_1 \cup B_2}(\tilde{\calC}_1,\tilde{\calC}_2).
$$

As a consequence of the above, the swapping map constructed above is an involution, and thus, a bijection.
\end{lemma}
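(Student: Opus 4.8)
The plan is first to reduce the support identity to a statement about excited edges, and then to verify that statement directly. Recall that a plaquette $p$ belongs to $\supp(\calC)$ exactly when some edge on the boundary of $p$ is \emph{excited} for $\calC$, i.e.\ lies in $EE(\calC):=\{e=(x,y)\in\Lambda_1:\sigma_e\neq 1\text{ or }\phi_x\neq\phi_y\}$. Hence it suffices to prove $EE(\tilde\calC_1)\cup EE(\tilde\calC_2)=EE(\calC_1)\cup EE(\calC_2)$; passing to the plaquettes touched by this common edge set and then adjoining $B_1\cup B_2$ to both sides gives $\supp_{B_1\cup B_2}(\calC_1,\calC_2)=\supp_{B_1\cup B_2}(\tilde\calC_1,\tilde\calC_2)$. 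I will use one geometric fact: if $e\in EE(\calC_1)\cup EE(\calC_2)$, then all plaquettes containing $e$ lie in the joint support and in a single vortex of the vortex decomposition (all plaquettes containing a fixed edge $e$ form a connected subgraph of $G(\cdot)$). So ``the vortex of an excited edge'' is well defined; I call an excited edge \emph{inside} $\mathcal V_2$ if its vortex is $\mathcal V_2$, and note that an excited edge is inside $\mathcal V_2$ iff it lies in $\Lambda_1(\mathcal V_2)$.

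Write $EE(\calC_i)=SE_i\cup DW_i$ with $SE_i=\{e:(\sigma_i)_e\neq 1\}$ and $DW_i=\{e=(x,y):(\phi_i)_x\neq(\phi_i)_y\}$ the domain-wall edges. On $\Lambda_1(\mathcal V_2)$ the map $T$ replaces the ordered pair $((\sigma_1)_e,(\sigma_2)_e)$ by $((\sigma_2)_e,(\sigma_1)_e)$ and leaves $\sigma_1,\sigma_2$ unchanged off $\Lambda_1(\mathcal V_2)$, so the unordered pair of $\sigma$-values at each edge is preserved and $SE(\tilde\calC_1)\cup SE(\tilde\calC_2)=SE_1\cup SE_2$. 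For the $\phi$ part, use that over $Z_2$ one has $DW(\phi\chi)=DW(\phi)\,\triangle\,DW(\chi)$ and $\chi^{-1}=\chi$, and that each connected phase boundary of $\calC_i$, being a connected subset of $\supp(\calC_i)$, lies inside a single vortex. Let $D_i^{\mathrm{in}}$ be the union of the phase boundaries of $\calC_i$ lying inside $\mathcal V_2$ (so $D_i^{\mathrm{in}}$ consists exactly of the edges of $DW_i$ that are inside $\mathcal V_2$) and $D_i^{\mathrm{out}}=DW_i\setminus D_i^{\mathrm{in}}$. By Lemma \ref{lemma:chiconst}, $\chi_1,\chi_2$ may be taken with $DW(\chi_1)=D_1^{\mathrm{in}}$ and $DW(\chi_2)=D_2^{\mathrm{in}}$. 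Then $DW(\tilde\phi_1)=DW_1\,\triangle\,D_1^{\mathrm{in}}\,\triangle\,D_2^{\mathrm{in}}=D_1^{\mathrm{out}}\,\triangle\,D_2^{\mathrm{in}}=D_1^{\mathrm{out}}\cup D_2^{\mathrm{in}}$, the last step because no edge can be both inside and outside $\mathcal V_2$, so $D_1^{\mathrm{out}},D_2^{\mathrm{in}}$ are disjoint; symmetrically $DW(\tilde\phi_2)=D_2^{\mathrm{out}}\cup D_1^{\mathrm{in}}$. Taking unions, $DW(\tilde\phi_1)\cup DW(\tilde\phi_2)=DW_1\cup DW_2$, and combining with the $\sigma$ part gives $EE(\tilde\calC_1)\cup EE(\tilde\calC_2)=EE(\calC_1)\cup EE(\calC_2)$, hence the support identity. (In fact the computation shows more: $T$ exchanges the part of each support lying inside $\mathcal V_2$ and fixes the part lying outside it.)

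For the consequence, since the joint support is unchanged, applying $T$ again uses the same vortex decomposition, the same $\mathcal V_2$, and the same $\Lambda_1(\mathcal V_2)$. The $\sigma$-exchange on $\Lambda_1(\mathcal V_2)$ is a transposition, hence self-inverse. The formulas $DW(\tilde\phi_1)=D_1^{\mathrm{out}}\cup D_2^{\mathrm{in}}$, $DW(\tilde\phi_2)=D_2^{\mathrm{out}}\cup D_1^{\mathrm{in}}$ say that the phase boundaries of $\tilde\calC_1$ (resp.\ $\tilde\calC_2$) inside $\mathcal V_2$ are exactly $D_2^{\mathrm{in}}$ (resp.\ $D_1^{\mathrm{in}}$); since a $Z_2$-function on the connected vertex set $\Lambda_0$ is determined by its domain-wall set up to a single global sign, the auxiliary maps produced in the second application are $\chi_2$ and $\chi_1$ up to such a sign, which is irrelevant in the products. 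Thus $\tilde{\tilde\phi}_1=\tilde\phi_1\chi_2\chi_1^{-1}=\phi_1\chi_1\chi_2^{-1}\chi_2\chi_1^{-1}=\phi_1$ and likewise $\tilde{\tilde\phi}_2=\phi_2$, while the $\sigma$-components also return; so $T^2=\mathrm{id}$ and $T$ is a bijection.

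I expect the $\phi$ part to be the main obstacle. One must fix the meaning of ``phase boundary'' so that the identity $DW(\phi\chi)=DW(\phi)\triangle DW(\chi)$ is transparent (e.g.\ identify a phase boundary with its domain-wall edge set), verify that Lemma \ref{lemma:chiconst} really yields $\chi_i$ with domain-wall set \emph{exactly} $D_i^{\mathrm{in}}$ (this rests on each phase-boundary component lying in one vortex, so that the phase boundaries inside $\mathcal V_2$ account for all of $DW_i$ inside $\mathcal V_2$), and check that these auxiliary maps are canonical enough — unique up to one global sign — for the two applications of $T$ to cancel. One should also confirm the indexing in the construction of $\chi_1,\chi_2$ is the one making the support exchange clean (under the reading above, $\chi_1$ removes the phase boundaries of $\calC_1$ lying in $\mathcal V_2$). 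The $\sigma$ part and the geometric fact about vortices are routine by comparison.
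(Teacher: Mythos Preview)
Your argument is correct and follows essentially the same strategy as the paper: establish equality of supports by tracking excited edges, split into the $\sigma$-part and the phase-boundary part, and then deduce the involution from the identical vortex decomposition together with the algebraic cancellation $\tilde\phi_1\chi_2\chi_1^{-1}=\phi_1$. Your packaging is somewhat cleaner---you work directly at the edge level via $EE(\tilde\calC_1)\cup EE(\tilde\calC_2)=EE(\calC_1)\cup EE(\calC_2)$ and the $Z_2$ identity $DW(\phi\chi)=DW(\phi)\,\triangle\,DW(\chi)$, whereas the paper argues two plaquette-level inclusions case by case---but the content is the same. You also correctly flag the sign ambiguity in the choice of $\chi_i$ (determined by its domain walls only up to a global $\pm 1$), which the paper's proof tacitly assumes is resolved by a fixed convention; this is a genuine point that deserves the remark you give it.
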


\begin{proof}

\textit{Part 1: Equality of the supports}

Let $p$ be a plaquette in $\supp_{B_1 \cup B_2}(\calC_1,\calC_2).$ If $p$ were in $B_1 \cup B_2$, then it would still be in $ \supp_{B_1 \cup B_2}(\tilde{\calC}_1,\tilde{\calC}_2)$. Therefore, we only need to consider the case that $p$ is in the support due to having an excited edge on its boundary. WLOG, assume that $p$ has an excited edge of $\calC_1$.

Assume now that $p$ is in the support due to having an edge with $(\sigma_1)_e \ne 1$. If $e$ were an edge of $\Lambda_1(\mathcal{V}_2)$, then $(\tilde{\sigma}_2)_e = (\sigma_1)_e \ne 1$. Thus, $e$ is an excited edge of $\tilde{\sigma}_2$ and $p$ is in the support of $\mathcal{V}_2$.  If $e \in \Lambda_1(\mathcal{V}_2)^c$, then $(\tilde{\sigma}_1)_e = (\sigma_1)_e$, and $e$ is an excited edge of $\tilde{\calC}_1$. Thus, $p \in \supp(\tilde{\calC}_1).$ 

Now consider the case that $p$ is in the support due to having an edge $e=(x,y)$ in the boundary with $(\phi_1)_x \ne (\phi_1)_y$. This means that the edge $e$ was part of a phase boundary in $\calC_1$. Assume that $p$ is not in $\mathcal{V}_2$. This means that the phase boundary containing the edge $e$ was not one of the phase boundaries $PB_1^1,\ldots, PB_n^1$ that were removed under the transformation $\calC_1 \to \tilde{\calC}_1$.  Thus, $(\tilde{\phi}_1)_x \ne (\tilde{\phi}_1)_y$, and $p$ is in the support of $\tilde{\calC}_1$. If instead, $p$ was in $PB_1^1 \cup \ldots \cup PB_n^1$, then we would have $(\tilde{\phi}_2)_x \ne (\tilde{\phi}_2)_y$, and $p$ would be in the support of $\tilde{\calC}_2$.

This shows that,
$$
\supp_{B_1 \cup B_2}(\calC_1,\calC_2) \subset \supp_{B_1 \cup B_2}(\tilde{\calC}_1,\tilde{\calC}_2).
$$

Now, we show the other inclusion. Let $p$ be a plaquette in  $\supp_{B_1 \cup B_2}(\tilde{\calC}_1,\tilde{\calC}_2)$ and assume that $p$ is not in $B_1 \cup B_2$. WLOG, assume that $p$ is in $\supp(\tilde{\calC}_1)$. Now assume that $p$ is in this support because there is an edge $e \in \delta p$ with $(\tilde{\sigma}_1)_e \ne 1$. If the edge $e$ was in $\Lambda_1(\mathcal{V}_2)$, then $(\sigma_2)_e = (\tilde{\sigma}_1)_e \ne 1$. Thus, $e$ is an excited edge of $\sigma_2$, and $p \in \supp(\tilde{\sigma}_2)$. If instead $e$ was in $\Lambda_1(\mathcal{V}_2)^c$, then $(\sigma_1)_e= (\tilde{\sigma}_1)_e$, and $e$ is an excited edge of $\sigma_1$. Thus, $p \in \supp(\sigma_1)$.

Now consider the case that $p$ has a boundary edge $e=(x,y)$ with $(\tilde{\phi}_1)_x \ne (\tilde{\phi}_1)_y$. Thus, the edge $e$ is part of a phase boundary. The only phase boundaries found in $\tilde{\phi}_1$ are those phase boundaries of $\phi_1$ that lie outside of $\mathcal{V}_2$ (namely, all phase boundaries except for $PB_1^1\cup \ldots \cup PB_n^1$) or one of the phase boundaries $PB_1^2 \cup \ldots \cup PB_m^2$ of $\sigma_2$ that were in $\mathcal{V}_2$. In the former case, $e$ would be part of a phase boundary in $\phi_1$, while it would be part of a phase boundary in $\phi_2$ in the other. In either case, we see that $p$ would be in $\supp_{B_1 \cup B_2}(\calC_1,\calC_2)$. 

This shows the other inclusion and, thus, we have equality of supports. 

\textit{Part 2: Showing the map is an involution}

Now, since the supports are equal, we would have the same vortex decomposition. Thus, the definition of $\mathcal{V}_2$ would be the same whether we consider the pair $(\calC_1,\calC_2)$ or $(\tilde{\calC}_1,\tilde{\calC}_2)$. The exchange map for the gauge field configuration $\sigma$ only depends on knowing the edges of $E(\mathcal{V}_2)$, so it is clearly an involution on these gauge field configurations.

Furthermore, one can see that the relevant phase boundaries of $\tilde{\phi}_1$ are $PB_1^2 \cup \ldots \cup PB_m^2$, while the relevant phase boundaries of $\tilde{\phi}_2$ are $PB_1^1 \cup \ldots \cup PB_n^1$. We see that the map $\chi_2$ would remove the phase boundary of $\tilde{\phi}_1$ while $\chi_1$ would remove the phase boundary of $\tilde{\phi}_2$.  We see that the map applied to $\tilde{\phi}_1$ sends $\tilde{\phi}_1$ to $\tilde{\phi}_1 \chi_2 \chi_1^{-1}= \phi_1 \chi_1 \chi_2^{-1} \chi_2 \chi_1^{-1}= \phi_1$. Similarly, $\tilde{\phi}_2$ would be sent to $\phi_2$. Thus, we see that our proposed map is an involution.
\end{proof}

\begin{lemma}
Let $\mathbb{P}$ be the measure generated by the Hamiltonian $H_{N,\beta,\kappa}$. Then,
\begin{equation}
    \mathbb{P}(\calC_1) \mathbb{P}(\calC_2) = \mathbb{P}(\tilde{\calC_1}) \mathbb{P}(\tilde{\calC_2}).
\end{equation}
\end{lemma}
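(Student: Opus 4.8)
The plan is to reduce everything to an additive identity for the Hamiltonian and then exploit its locality together with the vortex structure produced by the swapping construction. Since $\mathbb{P}(\calC) = Z^{-1}\exp\!\big(H_{N,\beta,\kappa}(\calC)\big)$ with the partition function $Z$ independent of the configuration, it suffices to prove
\[
H_{N,\beta,\kappa}(\calC_1) + H_{N,\beta,\kappa}(\calC_2) = H_{N,\beta,\kappa}(\tilde\calC_1) + H_{N,\beta,\kappa}(\tilde\calC_2).
\]
I would write $H_{N,\beta,\kappa} = H^{\mathrm{g}} + H^{\mathrm{h}}$, where $H^{\mathrm{g}}(\sigma) = \beta\sum_{p \in \Lambda_2}\mathrm{Tr}[\rho((\td\sigma)_p) - \rho(1)]$ is the pure-gauge (plaquette) sum and $H^{\mathrm{h}}(\sigma,\phi) = \kappa\sum_{e=(x,y)\in\Lambda_1}\mathrm{Re}[\phi_x\mathrm{Tr}[\rho(\sigma_e)]\phi_y^{-1} - \mathrm{Tr}[\rho(1)]]$ is the Higgs (edge) sum, and prove the identity for each separately. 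The structural input is the preceding lemma: $\supp_{B_1\cup B_2}(\calC_1,\calC_2) = \supp_{B_1\cup B_2}(\tilde\calC_1,\tilde\calC_2)$, so both pairs share the same vortex decomposition $\mathcal{V}_1\cup\cdots\cup\mathcal{V}_N$; moreover each nonzero summand of $H^{\mathrm g}$ is attached to a plaquette in the (common) support and each nonzero summand of $H^{\mathrm h}$ to an excited edge, hence to an edge bordering a support plaquette.

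For the gauge part, the summand at $p$ depends only on $\sigma$ restricted to $\partial p$ and vanishes unless $p$ lies in the support. Partition the support plaquettes into those in $\mathcal{V}_2$ and those in other vortices. If $p\in\mathcal{V}_2$ then $\partial p\subseteq\Lambda_1(\mathcal{V}_2)$, so by the exchange rule $\tilde\sigma_1 = \sigma_2$ and $\tilde\sigma_2 = \sigma_1$ on $\partial p$, and the two terms simply interchange. If $p$ is a support plaquette not in $\mathcal{V}_2$, then $p$ has no \emph{excited} boundary edge in $\Lambda_1(\mathcal{V}_2)$ (an excited edge $e$ forces all plaquettes containing $e$ into a single vortex, which would then be $\mathcal{V}_2$), and on any non-excited boundary edge of $p$ both $\sigma_1$ and $\sigma_2$ equal $\groupid$, so $(\td\tilde\sigma_i)_p = (\td\sigma_i)_p$ for $i=1,2$. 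Here one must check the elementary fact about the graph $G(P)$ underlying the vortex decomposition that governs which plaquettes sharing an edge are joined by a common $3$-cell. Summing the two cases gives $\sum_p[\cdots](\tilde\sigma_1)+\sum_p[\cdots](\tilde\sigma_2) = \sum_p[\cdots](\sigma_1)+\sum_p[\cdots](\sigma_2)$.

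The Higgs part is the main obstacle. Each summand for $e=(x,y)$ depends only on $\sigma_e$ and on the bond variable $\phi_x\phi_y^{-1}$, and vanishes off the excited edges. The exchange rule $\tilde\phi_1 = \phi_1\chi_1\chi_2^{-1}$, $\tilde\phi_2 = \phi_2\chi_2\chi_1^{-1}$ modifies, at each edge, both bond variables by the same factor, controlled by the phase boundaries of $\chi_1$ and $\chi_2$, which by Lemma \ref{lemma:chiconst} are precisely $PB_1^1\cup\cdots\cup PB_n^1$ (inside $\mathcal{V}_1$) and $PB_1^2\cup\cdots\cup PB_m^2$ (inside $\mathcal{V}_2$). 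The mechanism I would aim for is that the combined action of the $\sigma$-swap on $\Lambda_1(\mathcal{V}_2)$ and the charge functions $\chi_1,\chi_2$ transports, between $\calC_1$ and $\calC_2$, the excited edges of $\mathcal{V}_1\cup\mathcal{V}_2$ \emph{together with} the pair $(\sigma_e,\ \text{bond variable})$ they carry, so that the unordered collection of edge-indexed local energies over $\mathcal{V}_1\cup\mathcal{V}_2$ is unchanged, while every edge outside $\mathcal{V}_1\cup\mathcal{V}_2$ retains both its $\sigma$-value and both its bond variables and so contributes identically. Making this precise requires the delicate bookkeeping I expect to be the hard part: (i) using the nested-island picture of Lemma \ref{lemma:chiconst} to confirm that $\chi_1,\chi_2$ relocate \emph{complete} phase boundaries, create no new ones, and touch no bond in a vortex other than $\mathcal{V}_1,\mathcal{V}_2$; (ii) matching the edge set $\Lambda_1(\mathcal{V}_2)$ on which $\sigma$ is swapped against the edge sets of the $PB_j^2$, so that on each relevant edge the swapped $\sigma$-value and the altered bond variable originate from the same source configuration; and (iii) checking that phase boundaries lying in distinct vortices are edge-disjoint, which decouples the $\mathcal{V}_1$- and $\mathcal{V}_2$-contributions. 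Once these are verified, the per-vortex contributions to $H^{\mathrm h}(\calC_1)+H^{\mathrm h}(\calC_2)$ are merely permuted by the swap, which completes the identity and hence the lemma.
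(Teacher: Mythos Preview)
Your overall approach matches the paper's: reduce to the additive identity for the Hamiltonian, split into the plaquette (gauge) sum and the edge (Higgs) sum, and verify each separately using locality of the summands. Your treatment of the gauge part is essentially identical to the paper's, including the key observation that a boundary edge of a support plaquette $p\notin\mathcal{V}_2$ lying in $\Lambda_1(\mathcal{V}_2)$ must be non-excited and hence carries $\sigma_1=\sigma_2=1$.

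Your Higgs argument, however, is overcomplicated and partly misdirected. In the construction actually used in the proof (and consistent with the definition of $\tilde\sigma_i$), the nontrivial bond variables of \emph{both} $\chi_1$ and $\chi_2$ lie only on edges of $\Lambda_1(\mathcal{V}_2)$: the swap touches nothing in $\mathcal{V}_1$ or any other vortex. With this in hand there is no need to track whole phase boundaries, invoke the nested-island picture, or introduce any role for $\mathcal{V}_1$; the paper's argument is a direct edge-by-edge check. For each excited edge $e=(x,y)$: if $e\in\Lambda_1(\mathcal{V}_2)$ then $(\tilde\sigma_1)_e=(\sigma_2)_e$, $(\tilde\sigma_2)_e=(\sigma_1)_e$, and (since the $\chi_i$ are trivial on bonds outside $\mathcal{V}_2$ and swap the phase boundaries inside $\mathcal{V}_2$) the bond variable of $\tilde\phi_1$ at $e$ equals that of $\phi_2$ and vice versa, so the two local energies interchange; if $e\notin\Lambda_1(\mathcal{V}_2)$ then both $\sigma$-values and both bond variables are unchanged. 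This replaces your items (i)--(iii) with a two-line case split and removes the need to match $\Lambda_1(\mathcal{V}_2)$ against specific phase-boundary edge sets.
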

\begin{proof}
It would suffice to show two equations.

First, we have to show,
\begin{equation}
\begin{aligned}
    &\sum_{p \in \Lambda_2}  [\text{Tr}[\rho((\td \sigma_1)_p)] - \text{Tr}[\rho(1)] ] +  \sum_{p \in \Lambda_2}  [\text{Tr}[\rho((\td \sigma_2)_p)] - \text{Tr}[\rho(1)] ] \\
    & =  \sum_{p \in \Lambda_2} [\text{Tr}[\rho((\td \tilde{\sigma}_1)_p)] - \text{Tr}[\rho(1)] ] +  \sum_{p \in \Lambda_2} [\text{Tr}[\rho((\td \tilde{\sigma}_2)_p)] - \text{Tr}[\rho(1)] ].
\end{aligned}
\end{equation}

The only nontrivial terms on both sides correspond to plaquettes with $(\td \sigma)_p \ne 0$. Let $p$ be a plaquette with $(\td \sigma_1)_p \ne 0$. There exists some edge $e \in \delta p $ with $(\sigma_1)_e \ne 0$. Thus, the plaquette $p$ must belong to $\supp(\sigma_1,\sigma_2)_{B_1 \cup B_2}$. Thus, it suffices to prove the above inequality when restricted to the support $S:=\supp_{B_1 \cup B_2}(\sigma_1,\sigma_2) = \supp_{B_1 \cup B_2}(\tilde{\sigma}_1,\tilde{\sigma}_2)$.

Let $p$ be a plaquette in the support $S$. If all of the boundary edges of $p$ are in $\Lambda_1(\mathcal{V}_2)$, then $(\sigma_1)_e = (\tilde{\sigma}_2)_e$ and $(\sigma_2)_e= (\tilde{\sigma}_1)_e$ for all boundary edges of $p$. This would imply that $\text{Tr}[\rho((\td \sigma_1)_p)]= \text{Tr}[\rho((\td \tilde{\sigma}_2)_p)]$ and $\text{Tr}[\rho((\td \sigma_2)_p)] = \text{Tr}[\rho((\td \tilde{\sigma}_1)_p)]$. We could apply similar logic if all of the boundary edges of $p$ are in $\Lambda_1(\mathcal{V}_2)^c$.

We now need to consider the case that some boundary edges of $p$ are in $\Lambda_1(\mathcal{V}_2)$ while others are in the complement. We first claim that if some boundary edge $e$ of $p$ is in $\Lambda_1(\mathcal{V}_2)$, then $(\sigma_1)_e = (\sigma_2)_e = 1 $. If this were not the case, this would imply that $e$ is an excited edge and, therefore, $p$ must lie in $\mathcal{V}_2$. But, this would imply that all boundary edges of $p$ are in $\Lambda_1(\mathcal{V}_2)$. This contradicts our assumption on $p$. Therefore, $(\sigma_1)_e = (\sigma_2)_e =1$. Furthermore, $(\tilde{\sigma}_1)_e = (\tilde{\sigma}_2)_e =1$. We can use this conclusion to show that $(\sigma_1)_e = (\tilde{\sigma}_1)_e$ and $(\sigma_2)_e= (\tilde{\sigma}_2)_e$ for all edges in the boundary of $p$. As before, this would show that the sums of the Wilson loop actions are the same. This completes the proof of our first equality. 





Secondly, we have to show,
\begin{equation}
\begin{aligned}
    &\sum_{e=(x,y) \in \Lambda_1} (\phi_1)_x \text{Tr}[\rho((\sigma_1)_e)] (\phi_1)_y^{-1} + \sum_{e =(x,y) \in \Lambda_1} (\phi_2)_x \text{Tr}[\rho((\sigma_2)_e)] (\phi_2)_y^{-1}\\
    &= \sum_{e=(x,y) \in \Lambda_1} (\tilde{\phi}_1)_x \text{Tr}[\rho((\tilde{\sigma}_1)_e)] (\tilde{\phi}_1)_y^{-1} + \sum_{e =(x,y) \in \Lambda_1} (\tilde{\phi}_e)_x \text{Tr}[\rho((\tilde{\sigma}_2)_2)] (\tilde{\phi}_2)_y^{-1}
\end{aligned}
\end{equation}

It suffices to prove the above identity as a sum over activated edges. Let $e$ be an activated edge. If $e$ were in  $\Lambda_1(\mathcal{V}_2)$, this would mean that $(\sigma_1)_e= (\tilde{\sigma}_2)_e$ and $(\sigma_2)_e = (\tilde{\sigma}_1)_e$. Furthermore,  $(\phi_1)_x (\phi_1)_y^{-1}= (\tilde{\phi}_2)_x (\tilde{\phi}_2)_y^{-1}$ and $(\phi_2)_x (\phi_2)_y^{-1}= (\tilde{\phi}_1)_x (\tilde{\phi}_1)_y^{-1}$. This is due to the fact if $e$ were part of a phase boundary in $\Lambda_1(\mathcal{V}_2)$, then we would perform a flipping so that the phase boundaries of $\phi_1$ match those of $\tilde{\phi}_2$ while the phase boundaries of $\phi_2$ match those of $\tilde{\phi}_1$ in $\mathcal{V}_2$. Otherwise, $e$ is not part of a phase boundary, and all quantities are trivially equal to $1$.

Ultimately, this means that, $(\phi_1)_x \text{Tr}[\rho((\sigma_1)_e)] (\phi_1)_y^{-1} = (\tilde{\phi}_2)_x \text{Tr}[\rho((\tilde{\sigma}_2)_e)] (\tilde{\phi}_2)_y^{-1}$ and $(\phi_2))x \text{Tr}[\rho((\sigma_2)_e)] (\phi_2)_y^{-1} = (\tilde{\phi}_1)_x \text{Tr}[\rho((\tilde{\sigma}_1)_e)] (\tilde{\phi}_1)_y^{-1}$.

If, instead, $e$ was in $\Lambda_1(\mathcal{V}_2)$, we would know that $(\phi_1)_x \text{Tr}[\rho((\sigma_1)_e)] (\phi_1)_y^{-1} = (\tilde{\phi}_1)_x \text{Tr}[\rho((\tilde{\sigma}_1)_e)] (\tilde{\phi}_1)_y^{-1}$  and $(\phi_2)_x \text{Tr}[\rho((\sigma_2)_e)] (\phi_2)_y^{-1} = (\tilde{\phi}_2)_x \text{Tr}[\rho((\tilde{\sigma}_2)_e)] (\tilde{\phi}_2)_y^{-1}$. We can sum this relationship over all edges to get the desired energy equality. 

\end{proof}

\subsection{Completing the Argument}

The discussion of the previous section establishes that we have a swapping map. At this point, we can apply Lemma \ref{lemma:swapping-gives-covariance-bound} to derive our decay of correlation bounds. The only possibility we need to exclude is that there is a vortex that would connect boxes $B_1$ and $B_2$ that are separated by distance $L$. The probability of this event occurring can be followed by using the polymer counting functions of Section \ref{section:probability-bound}. We remark that our bounds are even easier since the condition for splitting vortices is only determined by the condition of compatibility, i.e., whether they are adjacent to each other or not. We give the following definition:
\begin{defn}
Our polymer counting function $\Phi^{(2)}_{P_0}$ is defined as follows:
\begin{equation}
    \Phi^{(2)}_{P_0}(P) = \sum_{\substack{\supp_{P_0}(\calC_1=(\sigma_1,\phi_1),\calC_2=(\sigma_2,\phi_2))=P\\ \sum_{v \in \Lambda_0}(\phi_1)_v>0, \sum_{v \in \Lambda_0} \sum_{v \in \Lambda_0} (\phi_2)_v >0}} \exp[H_{N,\beta,\kappa}(\calC_1)] \exp[H_{N,\beta,\kappa}(\calC_2)]
\end{equation}
\end{defn}

This polymer counting function satisfies properties similar to those outlined in Lemmas \ref{lemma:Phi-2-factorization} and \ref{lemma:knot-appear-prob-bound-by-Phi}. Just as these lemmas are consequences of splittings applied to each individual component (either $\psi_1$ or $\psi_2$) independently, we can apply the arguments of \cite[Lemma 2]{Adh2021} to $\calC_1$ and $\calC_2$ separately to derive these splitting lemmas( for vortices rather than knots). The only lemma we would need to change is the quantitative probability bound derived in Lemma \ref{lemma:Phi-bound}; to bound the total probability percolation, one can sum quantities of the form $\Phi^{(2)}_{B_1 \cup B_2}(V)$ over appropriate vortices $V$ using an argument similar to the proof of  Proposition \ref{prop:percolation-probability-bound}.

In the next lemma, we derive an analog of the bound in Lemma \ref{lemma:Phi-bound}. An argument similar to the proof of Proposition \ref{prop:percolation-probability-bound} to bound the probability of percolation using estimates on $\Phi^{(2)}_{B_1 \cup B_2}(P)$ is left to the reader.

\begin{lemma} \label{lem:HiggsProbBoundLargeKappa}
Let $P$ be a plaquette set that cannot be decomposed into disjoint vortices. Assume that $P$ contains the boxes $B_1$ and $B_2$ that are separated by distance $K$.
Furthermore, define the constant $\mathfrak{c}$ as $$ \mathfrak{c}:=2\max_{(a,b) \ne(1,1) } \text{Re}[\text{Tr}[a \rho(b)] - \text{Tr}[\rho(1)]], $$ where $a$ can vary over entries in $H/H_t$, and $b$ can vary over entries in $G$.
Then, we have the following bound on $\Phi^{(2)}_{B_1 \cup B_2}$.
\begin{equation}
    \Phi^2_{B_1 \cup B_2} \le 4^{|P|} |G|^{8|P|} |H/H_t|^{8|P|} \exp[- \mathfrak{c}\frac{\kappa}{6}]^{|P|- |B_1 \cup B_2|}
\end{equation}

\end{lemma}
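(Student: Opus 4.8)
The plan is to follow the template of Lemma~\ref{lemma:Phi-bound}: bound $\Phi^{(2)}_{B_1\cup B_2}(P)$ by the number of configuration pairs $(\calC_1,\calC_2)$ that contribute to it, times a uniform bound on each summand $e^{H_{N,\beta,\kappa}(\calC_1)}e^{H_{N,\beta,\kappa}(\calC_2)}$. Concretely I would establish (i) that there are at most $4^{|P|}|G|^{8|P|}|H/H_t|^{8|P|}$ such pairs, and (ii) that each summand is at most $\exp(-\tfrac{\mathfrak c}{6}\kappa)^{|P|-|B_1\cup B_2|}$; multiplying these two estimates gives the lemma.

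For (i): if $\supp_{B_1\cup B_2}(\calC_1,\calC_2)=P$ then $S_i:=\supp(\calC_i)\subseteq P$ for $i=1,2$, so there are at most $2^{|P|}\cdot 2^{|P|}=4^{|P|}$ choices of the support pair $(S_1,S_2)$; fix one. Every excited edge of $\calC_i$ bounds a plaquette of $S_i$, hence lies among the at most $4|S_i|\le 4|P|$ edges incident to $S_i$, and off these edges $(\sigma_i)_e=\groupid$; thus $\sigma_i$ is one of at most $|G|^{4|P|}$ configurations. Likewise, any edge on which $\phi_i$ jumps is excited and so has both endpoints in the vertex set $V(S_i)$ of $S_i$ (of size at most $4|P|$); therefore every connected component of $\Lambda_0\setminus V(S_i)$ is phase-boundary-free and, when $S_i\ne\varnothing$, carries the constant value of $\phi_i$ shared with its neighbours in $V(S_i)$, so $\phi_i$ is determined by its values on $V(S_i)$ (up to the single global choice pinned by $\sum_v(\phi_i)_v>0$ when $S_i=\varnothing$). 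Hence $\phi_i$ is one of at most $|H/H_t|^{4|P|}$ configurations, and there are at most $4^{|P|}|G|^{8|P|}|H/H_t|^{8|P|}$ pairs $(\calC_1,\calC_2)$ in total.

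For (ii): since $\Re\Tr[\rho((\td\sigma)_p)-\rho(1)]\le 0$ for every plaquette $p$ and the edge term $\Re[\phi_x\Tr[\rho(\sigma_e)]\phi_y^{-1}-\Tr[\rho(1)]]$ vanishes on every non-excited edge $e=(x,y)$, we get $e^{H_{N,\beta,\kappa}(\calC)}\le\prod_{e=(x,y)\ \text{excited}}\exp\!\big(\kappa\,\Re[\Tr[\phi_x\rho(\sigma_e)\phi_y^{-1}]-\Tr[\rho(1)]]\big)$. On an excited edge $(\phi_x\phi_y^{-1},\sigma_e)\ne(\groupid,\groupid)$ — this is precisely the content of the reduction to $H/H_t$, namely that $\phi_x\Tr[\rho(\sigma_e)]\phi_y^{-1}=\Tr[\rho(1)]$ forces $\phi_x=\phi_y$ and $\sigma_e=\groupid$ — so that factor is at most $e^{\mathfrak c\kappa/2}$ by the definition of $\mathfrak c$. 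Finally, each plaquette of $P\setminus(B_1\cup B_2)$ lies in $\supp(\calC_1)\cup\supp(\calC_2)$ and hence bounds an excited edge of $\calC_1$ or of $\calC_2$; since an edge bounds at most six plaquettes in dimension four, the combined number of excited edges of $\calC_1$ and $\calC_2$ is at least $\tfrac16(|P|-|B_1\cup B_2|)$, so $e^{H_{N,\beta,\kappa}(\calC_1)}e^{H_{N,\beta,\kappa}(\calC_2)}\le (e^{\mathfrak c\kappa/2})^{(|P|-|B_1\cup B_2|)/6}\le\exp(-\tfrac{\mathfrak c}{6}\kappa)^{|P|-|B_1\cup B_2|}$, the last step using $\mathfrak c\le 0$.

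The counting in (i) is routine; the crux is (ii), where the uniform gap $\mathfrak c$ must be extracted on every excited edge (this is exactly where the passage to $H/H_t$ is needed) and combined with the bounded-multiplicity bookkeeping that converts the excess $|P|-|B_1\cup B_2|$ into the power of $e^{-\mathfrak c\kappa/6}$. As in the main text, this Higgs analogue of Lemma~\ref{lemma:Phi-bound}—together with the vortex analogues of Lemmas~\ref{lemma:Phi-2-factorization} and \ref{lemma:knot-appear-prob-bound-by-Phi} and a vortex-size lower bound in the spirit of Lemma~\ref{lemma:knot-size-lower-bound}—then feeds into a Peierls argument mirroring the proof of Proposition~\ref{prop:percolation-probability-bound}.
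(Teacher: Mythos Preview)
Your proposal is correct and follows the same approach as the paper: bound the number of contributing pairs by $4^{|P|}|G|^{8|P|}|H/H_t|^{8|P|}$, then extract the $\kappa$-gap from the fact that at least $(|P|-|B_1\cup B_2|)/6$ excited edges must be present. You are slightly more careful than the paper in justifying the $\phi$ count, and slightly conservative in (ii) by taking only one orientation per excited edge (the sum over $\Lambda_1$ includes both, giving $e^{\mathfrak c\kappa}$ per unoriented edge), but you correctly absorb this slack via $\mathfrak c\le 0$.
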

\begin{proof}
    Consider two configurations $\calC_1$ and $\calC_2$ such that $\supp_{B_1 \cup B_2}(\calC_1,\calC_2)= P$. Then, $\supp(\calC_1)$ and $\supp(\calC_2)$ must be subsets of $P$. There are thus $4^{|P|}$ ways to choose the supports $\supp(\calC_1)$ and $\supp(\calC_2)$. Furthermore, the only edges that have a nontrivial gauge field are those on the boundary of the plaquettes of $P$. There are at most $4|P|$ of these boundary edges and $|G|^{8|P|}$ ways to assign the $\sigma$'s for $\calC_1$ or $\calC_2$. Similarly, the only nontrivial Higgs boson values with $\phi_v \ne 1$ are those found on boundary vertices of $P$. This will give us $|H/H_t|^{8|P|}$ ways to assign $\phi_1$ and $\phi_2$.
    
    Finally, we remark that each plaquette $p \in P \setminus (B_1 \cup B_2)$ must be excited by having an excited edge $e$ on the boundary. Furthermore, an excited edge can excite at most 6 plaquettes. Therefore, we must have at least $\frac{(|P| - |B_1 \cup B_2)}{6}$ excited edges. Each of these excited edges will contribute $\exp[-\mathfrak{c} \kappa]$.
\end{proof}

\section{Higgs Boson Small \texorpdfstring{$\kappa$}{k}}\label{appendix:higgs-small-kappa}

The previously discussed paper \cite{Adh2021} also considered the case when $\kappa$ is small. In contrast to the case when $\kappa$ is large, there is no longer any compulsion for $\phi_x= \phi_y$.  Clusters are no longer defined by looking at the boundaries between distinct $\phi_x$.

Instead, the main idea of \cite{Adh2021} was to expand the part of the exponential containing $ \kappa \sum_{e=(x,y) \in \Lambda_1^u} \left[2 \text{Re}[\phi_x \text{Tr}[\rho(\sigma_e)] \phi_y^{-1}] +c \right]$ in the Hamiltonian using the power series $\exp[x] = \sum_{i=0}^{\infty} \frac{x^i}{i!}$. This introduces the new random variable $i$, representing the power used in the expansion along each edge. The swapping argument used in this section will not involve swapping the values of the Higgs field $\phi_x$. The method, instead, will involve swapping the values of the field $I$, while summing over the Higgs field values $\phi$. We will give the details in what follows.

\subsection{Introduction to the model}
To deal with the small $\kappa$ case, we first have to expand the Hamiltonian via the random current expansion.

Let $c$ be a constant such that,
$$
2 \text{Re}[\phi_x \text{Tr}[\rho(\sigma_e)] \phi_y^{-1}] +c > 0,
$$
for all values of $\phi_x,\phi_y$ and $\sigma_e$.

We can consider the Hamiltonian,
\begin{equation}
    H_{N,\beta,\kappa}^1(\sigma,\phi) = \beta \sum_{e \in \Lambda_1} \text{Tr}[\rho(\sigma_e)- \rho(1)] + \kappa \sum_{e=(x,y) \in \Lambda_1^u} \left[2 \text{Re}[\phi_x \text{Tr}[\rho(\sigma_e)] \phi_y^{-1}] +c \right],
\end{equation}
where the sum $e\in \Lambda_1^u$ is over the set of unoriented edges. Namely, instead of oriented pairs $e$ and $-e$, we only include a single unoriented edge.

The random current expansion of this Hamiltonian would be,
\begin{equation}
\begin{aligned}
    \mathcal{H}_{N,\beta,\kappa}(\sigma,\phi,I) &= \beta \sum_{e \in \Lambda_1} \text{Tr}[\rho(\sigma_e) - \rho(1)] \\
     & + \kappa \sum_{e = (x,y) \in \Lambda_1^u} I(e) \log \left[2 \text{Re}[\phi_x \text{Tr}[\rho(\sigma_e)] \phi_y^{-1}] +c \right] - \log I(e)!.
\end{aligned}
\end{equation}

We have a new field $I(e)$ that takes non-negative integer values. Marginalizing over the $I(e)$ variables would return our original Hamiltonian $H_{N,\beta,\kappa}^1$. We have a new definition of support according to our new Hamiltonian $\mathcal{H}_{N,\beta,\kappa}$.

\begin{defn}
We define our set of activated edges $AE$ to be those edges $e$ with $I(e) \ne 0$.
\begin{equation}
    AE:=\{e: I(e) \ne 0 \},
\end{equation}
and the set of activated vertices $AV$ is the set,
\begin{equation}
    AV:=\{v: \exists e \in AE \text{  s.t. }  v\in \delta e \}.
\end{equation}

We can now define the support of our configuration $ \calC =(\sigma,\phi,I)$ to consist of those plaquettes which have a boundary vertex in $AV$ or those plaquettes with $(\td \sigma)_p \ne 1$.

\begin{equation}
    \supp(\calC) =\{ p \in P_N : \exists v \in AV \text{ s.t. } v \in \delta\delta p \text{ or } (\td \sigma)_p \ne 1 \},
\end{equation}

Given two configurations $\calC_1$ and $\calC_2$, we define 
\begin{equation}
    \supp_P(\calC_1,\calC_2) = \supp(\calC_1) \cup \supp(\calC_2) \cup P.
\end{equation}
\end{defn}

Instead of directly swapping configurations of our Hamiltonian, we consider a swapping model on our associated reduced Gibbs' measure. We can parameterize our configurations on our reduced Gibbs' measure as
$(\psi,\phi,I)$, where $\psi \in \text{Hom}(\pi_1(S_1(\Lambda),x_0),G)$. For each $\psi$, let $\Sigma$ denote the set of gauge field configurations $\sigma$ that map to $\psi$.

Our reduced Gibbs' measure has the following distribution,
\begin{equation}
    \mathcal{G}(\psi,\phi,I) = \sum_{\sigma \in \Sigma} \frac{1}{Z}\exp[\mathcal{H}_{N,\beta,\kappa}(\sigma,\phi,I)].
\end{equation}

Our goal in this section is to prove the following theorem,
\begin{theorem}
Consider two boxes $B_1$, $B_2$ separated by $\ell^{\infty}$ distance $L$. Additionally,  $f$ and $g$ be functions such that $f$ only depends on the values of $\sigma$ and $\phi$ in the box $B_1$, while $g$ depends only on the values of $\sigma$ and $\phi$ in the box $B_2$. Furthermore, assume that transformations of the form $\sigma_e \to \eta_x \sigma_e \eta_y^{-1}$ for some auxiliary field $\eta:\Lambda_0 \to G$, do not affect the values of $f$ or $g$. Then, for sufficiently large $\beta$ and $\kappa$, we have that
\begin{equation}
    \mathrm{Cov}(f,g) \le  C ||f||_{\infty} ||g||_{\infty}  
 |G|^{2L}   \mathfrak{c}^{L - |B_1 \cup B_2|}, 
\end{equation}
where $C$ is a constant that does not depend on $|G|,|H|,\beta$ or $\kappa$, and $\mathfrak{c}$ is a constant defined in Lemma \ref{lem:probBoundHiggssmallkappa}.
\end{theorem}

Our gauge invariance condition includes examples of actions such as Wilson loop expectations. Furthermore, the gauge invariance condition means that we would only need to consider functions of $\psi$ rather than $\sigma$ and, therefore, we could consider the auxiliary Hamiltonian $\mathcal{H}$ and reduced Gibbs' measure $\mathcal{G}$. In the remainder of this section, we construct 
We prove this theorem by defining a swapping map for this reduced Gibbs' measure.

\subsection{Swapping on reduced configurations}

This section will construct an almost swapping for the gauge class.

Namely, we will do the following. Let $R\calC_1=(\psi_1,\phi_1,I_1)$ and $R\calC_2=(\psi_2,\phi_2,I_2)$ be two reduced configurations.
We denote by $\Sigma_1$ the set of all $\sigma$'s that will map to the homomorphism $\psi_1$ and by $\Sigma_2$ the set of $\sigma$'s that map to the homomorphism $\psi_2$. 

Let $B_1$ and $B_2$ be two boxes separated by distance $L$.
Let $K_1 \cup \ldots \cup K_n$ be the knot decomposition corresponding to $\supp_{B_1 \cup B_2} (\calC_1,\calC_2)$. Observe that if we replace $\sigma_1$ with any other gauge field assignment in $\Sigma_1$, we would have the same knot decomposition. Assume that $B_1$ is in $K_i$ and $B_2$ is in $K_j$ for $i \ne j.$

Apply the mapping $T:(\psi_1,\psi_2) \to (\tilde{\psi}_1,\tilde{\psi}_2)$ according to the knot decomposition $K_1 \cup \ldots \cup K_n$ as in Section \ref{section:construction-of-swapping-map}. Let $\tilde{\Sigma}_1$ and $\tilde{\Sigma}_2$ be the sets of configurations that correspond to $\tilde{\psi}_1$ and $\tilde{\psi}_2$. In addition to this, switch $I_1(e)$, $I_2(e)$ on the set of activated edges of $K_j$ and switch $\phi_1$, $\phi_2$ on the activated vertices of $K_j$; these switches will give us $\tilde{I}_1,\tilde{I}_2$ and $\tilde{\phi}_1,\tilde{\phi}_2$. We denote the full map from $(\psi_1,\phi_1,I_1),(\psi_2,\phi_2,I_2) \to (\tilde{\psi}_1,\tilde{\phi}_1,\tilde{I}_1),(\tilde{\psi}_2, \tilde{\phi}_2,\tilde{I}_2)$ as $\hat{T}$. We will first show that $\hat{T}$ is an involution.
\begin{lemma}
Consider two configurations $R\calC_1=(\psi_1,\phi_1,I_1)$ and $R\calC_2=(\psi_2,\phi_2,I_2)$. We first have that,
\begin{equation}
    \supp_{B_1 \cup B_2}(R\calC_1,R\calC_2) = \supp_{B_1 \cup B_2}(\hat{T}(R\calC_1,R\calC_2)|_1, \hat{T}(R\calC_1,R\calC_2)|_2).
\end{equation}

As a consequence of the above statement on the supports, we can argue that $\hat{T}$ is an involution and, thus, a bijection.

\end{lemma}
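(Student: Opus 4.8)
The plan is to establish the support identity first, as a purely combinatorial statement, and then read off the involution property. Write $R\calC_i=(\psi_i,\phi_i,I_i)$, and recall that for any $\sigma$ the group element $(\td\sigma)_p=\sigma_p$ is conjugate to $\psi_{x_0}(\sigma)(\xi_p)$, so that the plaquette set $\{p:(\td\sigma)_p\neq\groupid\}$ depends only on $\psi$ and coincides with $\supp(\psi)$ as defined in Section \ref{section:preliminaries}. Hence $\supp(R\calC_i)=\supp(\psi_i)\cup\{p: \exists v\in AV_i \text{ with } v\in\delta\delta p\}$, and $\supp_{B_1\cup B_2}(R\calC_1,R\calC_2)$ is the union of $\supp(\psi_1)\cup\supp(\psi_2)$, the set of plaquettes meeting $AV_1\cup AV_2$, and $B_1\cup B_2$. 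In particular the $\phi$-components do not enter the support at all.

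For Part 1 it therefore suffices to check that $\hat T$ preserves each of these three pieces. The set $B_1\cup B_2$ is untouched by construction. For the gauge piece, $K_1,\dots,K_n$ is a knot decomposition of $\supp_{B_1\cup B_2}(R\calC_1,R\calC_2)$, which contains $\supp(\psi_1)\cup\supp(\psi_2)$, and the $\psi$-part of $\hat T$ is exactly the swapping map $T$ of Section \ref{section:construction-of-swapping-map} relative to this decomposition; Corollary \ref{cor:bijection-existence} then gives $\supp(\psi_1)\cup\supp(\psi_2)=\supp(\tilde\psi_1)\cup\supp(\tilde\psi_2)$. For the current piece, the map $(I_1,I_2)\mapsto(\tilde I_1,\tilde I_2)$ acts on each edge $e$ either as the identity or by transposing the pair $(I_1(e),I_2(e))$; hence $\{e:\tilde I_1(e)\neq 0\}\cup\{e:\tilde I_2(e)\neq 0\}=AE_1\cup AE_2$, so the joint activated vertex set $AV_1\cup AV_2$, and thus the set of plaquettes it meets, is unchanged. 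The support identity follows.

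With Part 1 in hand, Part 2 is immediate once one notes that the support identity removes the only apparent circularity in the definition of $\hat T$: the map is defined through the knot decomposition of $\supp_{B_1\cup B_2}(R\calC_1,R\calC_2)$ and the distinguished knots $K_i\ni B_1$, $K_j\ni B_2$, and Part 1 shows that $\hat T(R\calC_1,R\calC_2)$ has the identical joint support, hence (with the single fixed choice of knot decomposition) the same $K_1,\dots,K_n$ and in particular the same $K_i,K_j$, the same bijection $\Theta$, and the same collections of activated edges and activated vertices of $K_j$. A second application of $\hat T$ therefore re-uses all of this data: on the $\psi$-component the inner map $T$ is an involution by Lemma \ref{lemma:Tisinvolution}, and on the $I$- and $\phi$-components the second application transposes back precisely the entries transposed by the first. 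Hence $\hat T^2=\mathrm{id}$, so $\hat T$ is a bijection.

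I expect the only genuine subtlety to be the bookkeeping in Part 1 — in particular, spelling out exactly what is meant by ``the activated edges of $K_j$'' and ``the activated vertices of $K_j$'', and verifying that these notions are intrinsic to the pair (joint support, current data $I$), so that they are literally unchanged when $\hat T$ is reapplied to the image. Everything else is a direct transcription of the constructions and lemmas of Sections \ref{section:knot-decomposition} and \ref{section:construction-of-swapping-map}.
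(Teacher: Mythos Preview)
Your proposal is correct and follows essentially the same approach as the paper: both arguments reduce the support identity to the preservation of the gauge piece $\supp(\psi_1)\cup\supp(\psi_2)$ (via Corollary \ref{cor:bijection-existence}) and of the activated-edge piece $AE_1\cup AE_2$ (via the transposition), and then read off the involution from the resulting equality of knot decompositions together with Lemma \ref{lemma:Tisinvolution}. Your packaging into ``three preserved pieces'' is slightly cleaner than the paper's plaquette-by-plaquette two-inclusion argument, but the content is the same.
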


\begin{proof}
Let $p$ be a plaquette in the support $\supp_{B_1 \cup B_2}(R\calC_1,R\calC_2)$ that is not part of $B_1 \cup B_2$. This support has the knot decomposition $K_1 \cup \ldots K_n$ where $K_i$ contains $B_1$, and $K_j$ contains $B_2$ with $K_j \ne K_i$.  There are two possibilities for $p$: either there is a boundary vertex of $p$ that is simultaneously a boundary vertex for some activated edge in $AE_1$ (the activated edges of $R\calC_1$) or $AE_2$ (the activated edges of $R\calC_2$), or we have a nontrivial current around a vertex with $\psi_1(\xi_p) \ne 1$ or $\psi_1(\xi_p)\ne 1$.  

Now, consider the first case that $p$ shares a boundary vertex with an edge of $AE_1$ or $AE_2$. WLOG, we may assume that this is an edge $e$ of $AE_1$. If this plaquette were part of the knot $K_j$, then $e$ would be an activated edge of the configuration $\hat{T}(R\calC_1,R\calC_2)|_2$ by our definition of the map $\hat{T}$. Thus, $p$ would be part of the support of $\hat{T}(R\calC_1,R\calC_2)|_2$ and also in 
$\supp_{B_1 \cup B_2}(\hat{T}(R\calC_1,R\calC_2)|_1,\hat{T}(R\calC_1,R\calC_2)|_2)$. If, instead, the plaquette $p$ was not part of the knot $K_j$, then $e$ would instead be an activated edge of the configuration $\hat{T}(R\calC_1,R\calC_2)|_1$, and we could apply the same logic.

If we consider the case that $p$ has nontrivial current $\psi_1(\xi_p)\ne 1$ or $\psi_2(\xi_p) \ne 1$, then the fact that $p \in \supp_{B_1\cup B_2}(\hat{T}(R\calC_1,R\calC_2)|_1, \hat{T}(R\calC_1,R\calC_2)|_2)$ is true from the corresponding property of $T$ in Lemma \ref{lemma:Tisinvolution}.

This shows that $$\supp_{B_1 \cup B_2}(R\calC_1,R\calC_2) \subset \supp_{B_1 \cup B_2}(\hat{T}(R\calC_1,R\calC_2)|_1,\hat{T}(R\calC_1,R\calC_2)|_2)$$.

The other inclusion is similar. If $p$ were in $$\supp_{B_1 \cup B_2}(\hat{T}(R\calC_1,R\calC_2)|_1,\hat{T}(R\calC_1,R\calC_2)|_2),$$ then either $p$ shares a boundary vertex with an activated edge in one of $\hat{T}(R\calC_1,R\calC_2)|_1$ or $\hat{T}(R\calC_1,R\calC_2)|_2$, or it has a nontrivial current with some $\psi(\xi_p) \ne 1$.

The case that $p$ has a nontrivial current was again treated in Lemma \ref{lemma:Tisinvolution}. Now we consider the case that $p$ shares a boundary vertex with an activated edge. WLOG, assume the activated edge was in $\hat{T}(R\calC_1,R\calC_2)|_1$. All activated edges of $\hat{T}(R\calC_1,R\calC_2)$ come from an activated edge in $R\calC_1$ ( if $p$ was not part of $K_j$) or an activated edge in $R\calC_2$ ( if $p$ was part of $K_j$). Thus, $p$ still shares a boundary vertex of an activated edge either in $R\calC_1$ or in $R\calC_2$. In either case, $p$ would belong to $\supp_{B_1 \cup B_2}(R\calC_1,R\calC_2)$ as desired.

This shows the other inclusion,
$$
 \supp_{B_1 \cup B_2}(\hat{T}(R\calC_1,R\calC_2)|_1,\hat{T}(R\calC_1,R\calC_2)|_2) \subset \supp_{B_1 \cup B_2}(R\calC_1,R\calC_2).
$$

Since the supports of both sides are equal, we see that both supports would have the same knot decomposition $K_1\cup \ldots \cup K_n$. By the definition of $\hat{T}$, $\hat{T}$ applied to $\hat{T}(R\calC_1,R\calC_2)$ would reverse the switching of the activated edges in $K_j$ we performed in the first $\hat{T}$. Lemma \ref{lemma:Tisinvolution} additionally shows that $\hat{T}$ will be an involution when restricted to the coordinates $(\psi_1,\psi_2)$ (since $\hat{T}$ is equal to $T$ on this pair).

\end{proof}


The remainder of this section is devoted to showing that $\hat{T}$ preserves probability.

It will be a consequence of the following lemma.
\begin{lemma} \label{lemma:decompRC}
Let $R\calC=(\psi,\phi,I)$ have a support with knot decomposition $K_1 \cup K_2 \ldots \cup K_n$. Let $B_1$ be the box that separates $K_1$ from $K_2 \cup \ldots \cup K_n$.  Let $\Sigma$ be the set of all configurations $\sigma$ that map to $\psi$.

Now, we can define notions that are related to our splitting.
Let $\phi^{in}$, $I^{in}$ denote the values of $\phi$ and $I$ restricted to the activated vertices and edges in $K_1$ (so $\phi^{in}$ and $I^{in}$ take trivial values for all other vertices and edges). We similarly let $\phi^{out}$ and $I^{out}$ denote the values of $\phi$ and $I$ not included in $\phi^{in}$ and $I^{in}$ (so they would take trivial values on the activated vertices and edges in $K_1$). Finally, let $(\psi^{in},\psi^{out})$ denote the splitting of the homomorphism $\psi$ in accordance with the separation by the box $B_1$ in Lemma  \ref{lemma:bijection-existence}.
Let $\Sigma^{in}$ ($\Sigma^{out}$) be the set of all gauge configurations $\sigma$ that map to the homomorphism $\psi^{in}$ ($\psi^{out}$).

Then, we have that,
\begin{equation}
\begin{aligned}
    \frac{1}{|G|^{|\Lambda_0|-1}} \sum_{\sigma \in \Sigma} \exp[\mathcal{H}_{N,\beta,\kappa}(\sigma,\phi,I)] &= \frac{1}{|G|^{|\Lambda_0|-1}} \sum_{\sigma \in \Sigma^{in}}\exp[\mathcal{H}_{N,\beta,\kappa}(\sigma,\phi^{in},I^{in})]\\& \times \frac{1}{|G|^{|\Lambda_0|-1}}\sum_{\sigma \in \Sigma^{out}}\exp[\mathcal{H}_{N,\beta,\kappa}(\sigma,\phi^{out},I^{out})] .
\end{aligned}
\end{equation}

One can iterate this construction to derive components $\phi^{in,j}$ ,$I^{in,j}$, $\Sigma^{in,j}$ at the iteration of splitting $K_j$ from $K_{j+1} \cup \ldots K_{n}$. ( With $\phi^{in,1}=\phi^{in}$).

As a consequence, we notice that,
\begin{equation}
    \frac{1}{|G|^{|\Lambda_0|-1}} \sum_{\sigma \in \Sigma} \exp[\mathcal{H}_{N,\beta,\kappa}(\sigma,\phi,I)] = \prod_{j=1}^n \frac{1}{|G|^{|\Lambda_0|-1}} \sum_{\sigma \in \Sigma^{j}} \exp[\mathcal{H}_{N,\beta,\kappa}(\sigma,\phi^{in,j}, I^{in,j})]
\end{equation}

\end{lemma}
\begin{proof}
With respect to $B_1$, one can choose a representative $\sigma$ in $\Sigma$ such that $\sigma$ can decompose as a product $\sigma^{in} \sigma^{out}$. $\sigma^{in}$ satisfies the property that its only nontrivial edge values are inside the box $B_1$, and $\sigma^{out}$ has its only nontrivial edges outside the box $B_1$.

By introducing an auxiliary field $\eta$, we see that we have,
\begin{equation}
\begin{aligned}
    &\frac{1}{|G|^{|\Lambda_0|-1}} \sum_{\sigma \in \Sigma} \exp[\mathcal{H}_{N,\beta,\kappa}(\sigma,\phi,I)]\\
    &= \prod_{p \in \Lambda_2} \varphi_{\beta}(\psi(\xi_p)) \frac{1}{|G|^{|\Lambda_0|}} \sum_{\eta: \Lambda_0 \to G}\\
    &\times \exp \left[  \sum_{e=(x,y):I(e) \ne 0} I(e) \log[2 \text{Re}[(\phi_x) \text{Tr}[\rho(\eta_x) \rho((\sigma_1)_e) \rho((\sigma_2)_e) \rho(\eta_y^{-1})](\phi_y)^{-1}] + c] \right]\\
    &= \prod_{p \in \Lambda_2(K_1)} \varphi_{\beta}(\psi^{in}(\xi_p)) \frac{1}{|G|^{|\Lambda_0(K_1)|}} \sum_{\eta^{in}: \Lambda_0(K_1) \to G}\\
    &\times \exp\left[ \sum_{e=(x,y):I^{in}(e) \ne 0} I^{in}(e)\log[ 2 \text{Re}[\phi^{in}_x \text{Tr}[\rho(\eta^{in}_x) \rho((\sigma_1)_e) \rho((\eta^{in}_y)^{-1})] (\phi^{in}_y)^{-1}] +c ]  \right]
    \\& \times \prod_{p \in \Lambda_2(K_1)} \varphi_{\beta}(\psi^{out}(\xi_p)) \frac{1}{|G|^{|\Lambda_0(K_1)|^c}} \sum_{\eta^{out}:\Lambda_0(K_1)^c \to G}\\
    & \times \exp \left[ \sum_{e=(x,y):I^{out} \ne 0} I^{out}(e) \log[ 2 \text{Re}[\phi^{out}_x \text{Tr}[\rho(\eta_x^{out}) \rho((\sigma_2)_e) \rho((\eta_y^{out})^{-1})] (\phi^{out}_y)^{-1}] + c] \right]
\end{aligned}
\end{equation}

The first equality merely reparameterizes the configurations in $\sigma$ in terms of the auxiliary field $\eta$. The point of the last two lines is that $B_1$ can split the connected clusters of $I^{in}$ and $I^{out}$ of the box $B_1$. This causes the Higgs boson interaction to split so that the Higgs boson interaction inside the box only depends on $I^{in}$, $\phi^{in}$, $\sigma_1$, and $\eta^{in}$. 

One can create dummy summation variables to turn $\eta^{in}$ and $\eta^{out}$ into full auxiliary fields rather than restricted auxiliary fields. Once this is done and a compensating power of $\frac{1}{|G|}$ is added, one sees that the last two lines are a representation of $\frac{1}{|G|^{|\Lambda_0|-1}} \sum_{\sigma \in \Sigma^{out}} \exp[\mathcal{H}_{N,\beta,\kappa}(\sigma,\phi^{out},I^{out})]$, and the two lines preceding those represent  $\frac{1}{|G|^{|\Lambda_0|-1}} \sum_{\sigma \in \Sigma^{in}} \exp[\mathcal{H}_{N,\beta,\kappa}(\sigma,\phi^{in},I^{in})]$.

\end{proof}

An immediate corollary of the above fact is our desired multiplicative identity.

\begin{cor}
Let $R\calC_1=(\psi_1,\phi_1,I_1)$ and $R\calC_2=(\psi_2,\phi_2,I_2)$ be two configurations with $\supp_{B_1\cup B_2}(R\calC_1,R\calC_2) = K_1 \cup \ldots \cup K_n$ with $B_1$ in $K_i$ and $B_2$ in $K_j$ with $j \ne 1$. Let the configurations after switching be $R\tilde{\calC}_1 = (\tilde{\psi}_1,\tilde{\phi}_1,\tilde{I}_1)$ and $R\tilde{\calC}_2= (\tilde{\psi}_2,\tilde{\phi}_2,\tilde{I}_2)$. After the switching, we have the probability conservation relation:
\begin{equation}
\begin{aligned}
    & \frac{1}{|G|^{2|\Lambda_0|-2}}\sum_{\sigma \in \Sigma_1} \sum_{\sigma \in \Sigma_2}\exp[ \mathcal{H}_{N,\beta,\kappa}(\sigma,\phi_1,I_1)] \times  \exp[\mathcal{H}_{N,\beta,\kappa}(\sigma,\phi_2,I_2)]=\\
    & \frac{1}{|G|^{2|\Lambda_0|-2}}\sum_{\sigma \in \tilde{\Sigma}_1} \sum_{\sigma \in \Sigma_2} \exp[\mathcal{H}_{N,\beta,\kappa}(\sigma,\tilde{\phi}_1,\tilde{I}_1)] \times \exp[\mathcal{H}_{N,\beta,\kappa}(\sigma,\tilde{\phi}_2,\tilde{I}_2)]. 
\end{aligned}
\end{equation}
\end{cor}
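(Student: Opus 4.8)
The plan is to deduce the identity by iterating Lemma~\ref{lemma:decompRC}, applied separately to the four configurations $R\calC_1, R\calC_2, R\tilde{\calC}_1, R\tilde{\calC}_2$, and then matching factors using the fact (established in the preceding lemma) that $\hat{T}$ preserves $\supp_{B_1 \cup B_2}$ and hence its knot decomposition. First I would fix the common decomposition $\supp_{B_1 \cup B_2}(R\calC_1, R\calC_2) = K_1 \cup \cdots \cup K_n$ with $B_1 \in K_i$, $B_2 \in K_j$, $i \ne j$, together with the chain of separating good rectangles used to iterate Lemma~\ref{lemma:decompRC}. For $a = 1, 2$ and $l \in [n]$, write $(\psi_a^{in,l}, \phi_a^{in,l}, I_a^{in,l})$ and $\Sigma_a^l$ for the pieces produced by that iteration, and set
\[ W_a^l := \frac{1}{|G|^{|\Lambda_0| - 1}} \sum_{\sigma \in \Sigma_a^l} \exp\big[\mathcal{H}_{N,\beta,\kappa}(\sigma, \phi_a^{in,l}, I_a^{in,l})\big]. \]
Lemma~\ref{lemma:decompRC} gives $\frac{1}{|G|^{|\Lambda_0| - 1}} \sum_{\sigma \in \Sigma_a} \exp[\mathcal{H}_{N,\beta,\kappa}(\sigma, \phi_a, I_a)] = \prod_{l = 1}^n W_a^l$, so the left-hand side of the claimed identity equals $\prod_{l=1}^n (W_1^l W_2^l)$, the prefactor $|G|^{-(2|\Lambda_0| - 2)}$ being exactly the two copies of $|G|^{-(|\Lambda_0|-1)}$ absorbed into the $W_a^l$.

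Next I would run the same factorization for the swapped pair. Since $\supp_{B_1\cup B_2}$ is unchanged, the knot decomposition and the chain of separating good rectangles are identical for $(R\tilde{\calC}_1, R\tilde{\calC}_2)$, so the iteration produces pieces $(\tilde{\psi}_a^{in,l}, \tilde{\phi}_a^{in,l}, \tilde{I}_a^{in,l})$, $\tilde{\Sigma}_a^l$ and weights $\tilde{W}_a^l$ indexed by the same $K_l$. The map $\hat{T}$ modifies data only on the coordinate $K_j$: on homomorphisms it is the map $T$ of Section~\ref{section:construction-of-swapping-map}, which swaps the $K_j$-component of the Corollary~\ref{cor:bijection-existence} splitting; on the current it swaps $I_1, I_2$ along the activated edges of $K_j$; on the Higgs field it swaps $\phi_1, \phi_2$ at the activated vertices of $K_j$. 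Because the activated edges, activated vertices, and the homomorphism splitting all localize knot-by-knot, this yields $\tilde{W}_a^l = W_a^l$ for every $l \ne j$, while $\tilde{W}_1^j = W_2^j$ and $\tilde{W}_2^j = W_1^j$. Hence the right-hand side equals $\big(\prod_{l \ne j} W_1^l W_2^l\big)\, W_2^j W_1^j = \prod_{l=1}^n (W_1^l W_2^l)$, matching the left-hand side.

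The step I expect to be the main obstacle is justifying the claim that $\tilde{W}_a^l = W_a^l$ for $l \ne j$ and that the two $j$-th factors merely get interchanged. This requires verifying that restricting the current $I$ to the activated edges inside a knot, restricting $\phi$ to the activated vertices inside that knot, and the gauge-field/homomorphism splitting from Corollary~\ref{cor:bijection-existence} are mutually compatible and genuinely supported on a single knot at a time, so that altering the $K_j$-coordinate built into $\hat{T}$ changes only $W_1^j, W_2^j$ (and only by swapping them) and leaves all other factors intact. Everything needed is contained in the structural results of Sections~\ref{section:knot-decomposition} and~\ref{section:construction-of-swapping-map} applied componentwise to $\calC_1$ and $\calC_2$, and in Lemma~\ref{lemma:decompRC} itself; the work is in assembling these carefully and in tracking the powers of $|G|$ through the iteration.
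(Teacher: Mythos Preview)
Your proposal is correct and follows essentially the same approach as the paper: factorize each of the four Gibbs weights into knot-local pieces via Lemma~\ref{lemma:decompRC}, then observe that $\hat{T}$ merely swaps the $j$-th factors while leaving all others fixed, so the two products agree. The paper's proof is terser but structurally identical, and the localization concern you flag as the main obstacle is exactly the point the paper passes over quickly.
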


\begin{proof}
By using Lemma \ref{lemma:decompRC}, we see that we can decompose each sum of the form $\frac{1}{|G|^{|\Lambda_0|-1}} \sum_{\sigma \in \Sigma_1} \exp[\mathcal{H}_{N,\beta,\kappa}(\sigma,\phi_1,I_1)]$ into its constituent parts $\psi_1^{l,in},\phi_1^{l,in}$, and $ I_1^{l,in}$ with respect to the division given by the knot decomposition $K_1 \cup K_2 \cup \ldots \cup K_n$ and write it as a product.

\begin{equation}
\begin{aligned}
    &\frac{1}{|G|^{|\Lambda_0|-1}} \sum_{\sigma \in \Sigma_1} \exp[\mathcal{H}_{N,\beta,\kappa}(\sigma,\phi_1,I_1)] = \prod_{l=1}^n \frac{1}{|G|^{|\Lambda_0|-1}} \sum_{\sigma \in \Sigma_1^{l,in}} \exp[\mathcal{H}_{N,\beta,\kappa}(\sigma,\phi_1^{l,in}, I_1^{l,in})],\\
    &\frac{1}{|G|^{|\Lambda_0|-1}} \sum_{\sigma \in \Sigma_2} \exp[\mathcal{H}_{N,\beta,\kappa}(\sigma,\phi_2,I_2)] = \prod_{l=1}^n \frac{1}{|G|^{|\Lambda_0|-1}} \sum_{\sigma \in \Sigma_2^{l,in}} \exp[\mathcal{H}_{N,\beta,\kappa}(\sigma,\phi_2^{l,in}, I_2^{l,in})].
\end{aligned}
\end{equation}

Now, the configurations $(\tilde{\psi}_1,\tilde{\phi}_1, \tilde{I}_1)$ and $(\tilde{\psi}_1, \tilde{\phi}_2,\tilde{I}_2)$ are formed by exchanging the components $(\psi_1^{j,in},\phi_1^{j,in},I_1^{j,in})$ and $(\psi_2^{j,in},\phi_2^{j,in},I_2^{j,in})$. Thus, we have that,
\begin{equation}
\begin{aligned}
    \frac{1}{|G|^{|\Lambda_0|-1}} \sum_{\sigma \in \tilde{\Sigma}_1} \exp[\mathcal{H}_{N,\beta,\kappa}(\sigma,\tilde{\phi}_1,\tilde{I}_1)] &= \prod_{l\ne j} \frac{1}{|G|^{|\Lambda_0|-1}} \sum_{\sigma \in \Sigma_1^{l,in}} \exp[\mathcal{H}_{N,\beta,\kappa}(\sigma,\phi_1^{l,in}, I_1^{l,in})]\\
    & \times \frac{1}{|G|^{|\Lambda_0|-1}} \sum_{\sigma \in \Sigma_2^{j,in}} \exp[\mathcal{H}_{N,\beta,\kappa}(\sigma,\phi_2^{j,in}, I_2^{j,in})].
\end{aligned}
\end{equation}

A similar expression holds for $(\tilde{\psi}_2,\tilde{\phi}_2,\tilde{I}_2)$. From these explicit decompositions, one can see that the products of the expressions are the same. This shows that the swapping $\hat{T}$ preserves probabilities.
\end{proof}

A consequence of the preceding corollary is that $\hat{T}$ is a swapping map on reduced configurations. 

\subsection{Probability Bounds}

As we have mentioned in the last section, we have constructed a swapping map that allows one to switch reduced configurations as long as that, in the knot decomposition, the boxes $B_1$ and $B_2$ are in different knots in the support. Thus, we have a decay of correlation bound whose error probability is bounded by the probability that $B_1$ and $B_2$ belong in the same knot in the knot decomposition. In this section, we define a polymer counting function that allows us to bound the probability of this event.

\begin{defn}
Out polymer counting function $\Phi^{(2)}_{P_0}$ is defined as follows:
\begin{equation}
    \Phi^{(2)}_{P_0}(P)= \frac{1}{(|G||H/H_t|)^{2|\Lambda_0|-2}} \sum_{\supp_{P_0}(\calC_1,\calC_2)=P} \exp[\mathcal{H}_{N,\beta,\kappa}(\calC_1)] \exp[\mathcal{H}_{N,\beta,\kappa}( \calC_2)].
\end{equation}
\end{defn}

The main property we need regarding the above polymer counting function is that it splits as a product when there are two sets $P_1$ and $P_2$ in the support that are separated by a box $B$. This is an analog of Lemma 
\ref{lemma:Phi-2-factorization} and can be proved using the methods of \cite[Lemma 9]{Adh2021}. A consequence of this splitting is that the probability that there is a knot $K$ that contains both boxes $B_1$ and $B_2$ is bounded by $\Phi^{(2)}_{B_1 \cup B_2}(K)$. Once we have a quantitative bound on $\Phi^{(2)}_{B_1 \cup B_2}(K)$ based on $\exp[-c|K|]$ for some $c$, we can  bound the contribution of all knots by using the arguments in the proof of Proposition \ref{prop:percolation-probability-bound}. Our next lemma gives a bound on $\Phi^{(2)}_{B_1 \cup B_2}(K)$, and we leave the full details of the bounds coming from summing up knots to the reader.

\begin{lemma} \label{lem:probBoundHiggssmallkappa}
We have the following bound on $\Phi^{(2)}_{P_0}(P)$.

We define $\mathfrak{c}$ as,
\begin{equation}
    \mathfrak{c}:= \max\left(\exp[\beta(max_{g \ne 1} 2 \text{Re}[\text{Tr}[\rho(g) - \rho(1)]])], \frac{\kappa}{24} \exp[\max_{a \ne b }(2 \text{Re}[a\text{Tr}[\rho(b)]] + c)]\right)
\end{equation}
\begin{equation} \label{eq:phi2bndhiggs}
    \Phi^{(2)}_{P_0}(P) \le 16^{|P|} G^{2|P|} 2^{8|P|} \mathfrak{c}^{|P|-|P_0|}.
\end{equation}
\end{lemma}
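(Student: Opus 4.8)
The plan is to prove Lemma~\ref{lem:probBoundHiggssmallkappa} by the two-step scheme already used for Lemma~\ref{lemma:Phi-bound} and for its large-$\kappa$ counterpart Lemma~\ref{lem:HiggsProbBoundLargeKappa}: first enumerate the pairs of reduced configurations $(\calC_1,\calC_2)=((\psi_1,\phi_1,I_1),(\psi_2,\phi_2,I_2))$ with $\supp_{P_0}(\calC_1,\calC_2)=P$, and then show that, after the sum over representative gauge fields in each fibre and after dividing by the normalization $(|G||H/H_t|)^{-2|\Lambda_0|+2}$, every such pair contributes an energetic weight at most $\mathfrak{c}^{|P|-|P_0|}$. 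Multiplying the two estimates gives \eqref{eq:phi2bndhiggs}.

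For the enumeration, note that $\supp_{P_0}(\calC_1,\calC_2)=P$ forces $\supp(\psi_i)\subseteq P$, forces every activated edge of $\calC_i$ to be incident to a vertex of a plaquette of $P$, and forces every non-trivial value of $\phi_i$ to sit on an activated vertex; hence the activated edges and vertices live inside fixed sets of size $O(|P|)$. Choosing the two homomorphism supports, the two activated-edge sets, and the $\phi_1,\phi_2$ values on the relevant $\le 4|P|$ vertices of each configuration costs at most $16^{|P|}\cdot 2^{8|P|}$ in the case $H/H_t=\mathbb{Z}_2$ at hand. For each such choice, Lemma~\ref{lemma:homomorphism-number-bound} bounds the number of homomorphisms $\psi_i$ with the prescribed support by $|G|^{|P|}$, giving $|G|^{2|P|}$; and since, by Lemma~\ref{lemma:psi-many-to-one}, every fibre $\{\sigma:\psi_{x_0}(\sigma)=\psi_i\}$ has exactly $|G|^{|\Lambda_0|-1}$ elements, the sum over representative $\sigma$'s entering the definition of $\Phi^{(2)}_{P_0}$ is cancelled by the $|G|^{-2|\Lambda_0|+2}$ part of the normalization (and the $|H/H_t|^{-2|\Lambda_0|+2}$ part likewise absorbs the sum over the values of $\phi_i$ outside the activated vertices). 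This yields the prefactor $16^{|P|}|G|^{2|P|}2^{8|P|}$.

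For the energetic weight, every plaquette $p\in P\setminus P_0$ is excited for one of two reasons. Either $\psi_1(\xi_p)\neq 1$ or $\psi_2(\xi_p)\neq 1$, in which case the $\beta$-part of $\mathcal{H}_{N,\beta,\kappa}$ contributes, for that plaquette, a factor at most $\exp\!\big[\beta\,\big(2\max_{g\neq 1}\mathrm{Re}\,\mathrm{Tr}[\rho(g)-\rho(1)]\big)\big]$; or $p$ shares a boundary vertex with an activated edge of $\calC_1$ or $\calC_2$. In the latter case, following the dilution step of Lemma~\ref{lem:HiggsProbBoundLargeKappa}, a single activated edge is a boundary vertex of at most $24$ plaquettes, so the number of activated edges is at least $\tfrac{1}{24}$ times the number of plaquettes excited only this way; and summing the current value over $I(e)\ge 1$ on a single activated edge, via
\[
\sum_{n\ge 1}\frac{x^{n}}{n!}=e^{x}-1\le xe^{x}
\]
with the relevant $x$ of order $\kappa$ (this is exactly the structure of the $\kappa\,I(e)\log(\cdots)-\log I(e)!$ terms in $\mathcal{H}_{N,\beta,\kappa}$), bounds each edge's contribution by a constant multiple of $\kappa$. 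Taking the maximum of the two decay rates produces the factor $\mathfrak{c}^{|P|-|P_0|}$, with $\mathfrak{c}$ as defined in the statement.

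The main obstacle is the genuinely new feature relative to Lemma~\ref{lemma:Phi-bound}: the infinite summation over the integer current field $I$. One must check that the Poisson-type weight $\exp[\kappa I(e)\log(\cdots)-\log I(e)!]$ is summable over $I(e)\ge 1$ with a bound proportional to $\kappa$, so that small $\kappa$ genuinely suppresses activated edges, and then combine that bound with the combinatorial dilution between activated edges and the plaquettes they excite. A secondary point is to verify that the normalization $(|G||H/H_t|)^{-2|\Lambda_0|+2}$ precisely cancels the gauge redundancy (the sum over $\sigma$ in each fibre, as handled in Lemma~\ref{lemma:decompRC}) and the bulk Higgs degrees of freedom not seen by the support; once those are in place, the remaining estimates mirror those in the main text and in Lemma~\ref{lem:HiggsProbBoundLargeKappa}.
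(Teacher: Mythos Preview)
Your proposal is correct and follows essentially the same approach as the paper: count the choices of supports, nontrivial-circulation plaquette sets, and activated-edge sets to obtain the combinatorial prefactors, invoke Lemma~\ref{lemma:homomorphism-number-bound} and the fibre-size cancellation for the $|G|^{2|P|}$ factor, and then bound the weight of each plaquette in $P\setminus P_0$ via the dichotomy (nontrivial circulation versus adjacency to an activated edge, with the $1/24$ dilution and the Poisson sum over $I(e)\ge 1$). Your treatment is in fact slightly more explicit than the paper's in two places: you spell out the bound $\sum_{n\ge 1}x^n/n!\le xe^x$ for the current sum, and you address the cancellation of the $|H/H_t|^{-2|\Lambda_0|+2}$ factor against the bulk Higgs degrees of freedom, which the paper's proof leaves implicit.
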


\begin{proof}
Consider two configurations $\calC_1$ and $\calC_2$ with joint support $P$. This means that the supports of $\calC_1$ and $\calC_2$ are subsets of $P$. There are $4^|P|$ ways to choose the support of $\calC_1$ and $\calC_2$. Furthermore, the set of plaquettes $p$ with nontrivial circulation ($\td \sigma \ne 1$) in $\calC_1$ is a subset of the support of $\calC_1$ ( and similar with $\calC_2$). Thus, there are at most $4^|P|$ ways to choose the set of plaquettes $P_1$( resp. $P_2$) with nontrivial circulation in $\calC_1$ (resp. $\calC_2$). This gives us our first factor of $16^{|P|}$ in Equation \eqref{eq:phi2bndhiggs}.

Given the set of plaquettes $P_1$ with nontrivial circulation, there are at most $|G|^{|P_1|}$ homomorphisms $\psi_1$ that will have $P_1$ as the subset with nontrivial circulation. Similarly, there are at most $|G|^{|P_2|}$ homomorphisms $\psi_2$ with $P_2$ as the subset with nontrivial circulation. The factor of $|G|^{-(|\Lambda_0|-1)}$ will cancel out the factor coming from considering the gauge configurations $\sigma$ that would map to a homomorphism $\psi_1$ or $\psi_2$. This explains the factor $|G|^{2|P|}$ in Equation \eqref{eq:phi2bndhiggs}.

Furthermore, the only activated edges in our configuration must be boundary edges of plaquettes in $P$. There are at most $2^{4|P|}$ ways to choose the set of activated edges in $\calC_1$ and $2^{4|P|}$ ways to choose the set of activated edges in $\calC_2$. This explains the third factor $2^{8|P|}$ in Equation \eqref{eq:phi2bndhiggs}.

Finally, we remark that if a plaquette is in $P \setminus P_0$, then either the plaquette has nontrivial circulation in one of $\calC_1$ or $\calC_2$ or there is an activated edge in $\calC_1$ or $\calC_2$ that shares a boundary vertex with $p$. The first quantity in the minimum appearing in $\mathfrak{c}$ bounds the contribution from a plaquette with nontrivial circulation. The second factor bounds the contribution from the existence of an activated edge. The bound comes from summing over $I(e)$ from $1$ to $\infty$ and further noticing that a single activated edge could result in at most $24$ plaquettes appearing in the support. This gives us our last factor of $\mathfrak{c}^{|P|-|P_0|}$.
\end{proof}

\bibliographystyle{plainnat}


\end{document}